\newtheorem{theorem}{Theorem}[section]
\theoremstyle{definition}
\newtheorem{proposition}[theorem]{Proposition}
\newtheorem{lemma}[theorem]{Lemma}
\newtheorem{corollary}[theorem]{Corollary}
\def\BZ{\mathbbm Z}
\def\BQ{\mathbbm Q}
\def\BR{\mathbbm R}
\def\BC{\mathbbm C}
\def\calA{\mathcal A}
\def\calT{\mathcal T}
\def\calB{\mathcal B}
\def\SL{\mathrm{SL}}
\def\ve{\varepsilon}
\def\PSL{\mathrm{PSL}}
\def\Re{\mathrm{Re}}
\def\be{\begin{equation}}
\def\ee{\end{equation}}
\newcommand{\fad}{\operatorname{\Phi}_{\mathsf{b}}}
\def\bb{\mathsf{b}}
\def\z{\zeta}
\def\GL{\mathrm{GL}}
\def\diag{\mathrm{diag}}
\def\hb{\hbar}
\def\vphi{\varphi}
\def\geom{\mathrm{geom}}
\def\mbfQ{\mathbf Q}
\def\tz{\hat z}
\def\tdelta{\hat \delta}
\def\llangle{\left\langle\!\left\langle}
\def\rrangle{\right\rangle\!\right\rangle}
\newcommand{\bea}{\begin{equation}\begin{aligned}}
\newcommand{\eea}{\end{aligned}\end{equation}}
\renewcommand{\=}{\;=\;}
\newcommand{\tq}{\widetilde q}
\newcommand{\Z}{\mathbb{Z}}
\newcommand{\Li}{\operatorname{Li}}
\newcommand{\hroot}{\hbar^{\frac{1}{2}}}
\newcommand{\hsqrt}{\hbar^{\frac{1}{2}}}
\newcommand{\ezh}{e^{z\hbar^{\frac 12}}}
\renewcommand\thepart{\@Roman\c@part}%
\renewcommand\part{%
   \if@noskipsec \leavevmode \fi
   \par
   \addvspace{6.7ex}%
   \@afterindentfalse
   \secdef\@part\@spart}
\def\@part[#1]#2{%
    \ifnum \c@secnumdepth >\m@ne
      \refstepcounter{part}%
      \addcontentsline{toc}{part}{Part~\thepart.\ #1}%
    \else
      \addcontentsline{toc}{part}{#1}%
    \fi
    {\parindent \z@ \raggedright
     \interlinepenalty \@M
     \normalfont
     \ifnum \c@secnumdepth >\m@ne
       \centering\large\scshape \partname~\thepart.%
       \hspace{1ex}%
     \fi%
     \large\scshape #2%
     \markboth{}{}\par}%
    \nobreak
    \vskip 4.7ex
    \@afterheading}
  \def\@spart#1{
  \refstepcounter{part}%
  \addcontentsline{toc}{part}{#1}%
    {\parindent \z@ \raggedright
     \interlinepenalty \@M
     \normalfont
     \centering\large\scshape #1\par}%
     \nobreak
     \vskip 4.7ex
     \@afterheading}
\renewcommand*\l@part[2]{%
  \ifnum \c@tocdepth >-2\relax
    \addpenalty\@secpenalty
    \addvspace{0.75em \@plus\p@}%
    \begingroup
      \parindent \z@ \rightskip \@pnumwidth
      \parfillskip -\@pnumwidth
      {\leavevmode
       \normalsize \bfseries #1\hfil \hb@xt@\@pnumwidth{\hss #2}}\par
       \nobreak
       \if@compatibility
         \global\@nobreaktrue
         \everypar{\global\@nobreakfalse\everypar{}}%
      \fi
    \endgroup
  \fi}
\def\l@subsection{\@tocline{2}{0pt}{2pc}{6pc}{}}
\begin{document}
\title[Perturbative invariants of cusped hyperbolic 3-manifolds]{
  Perturbative invariants of cusped hyperbolic 3-manifolds}

\author{Stavros Garoufalidis}
\address{
  International Center for Mathematics, Department of Mathematics \\
  Southern University of Science and Technology \\
  Shenzhen, China \newline
  {\tt \url{http://people.mpim-bonn.mpg.de/stavros}}}
\email{stavros@mpim-bonn.mpg.de}
\author{Matthias Storzer}
\address{Max Planck Institute for Mathematics \\
         Vivatsgasse 7, 53111 Bonn, Germany \newline
         {\tt \url{http://guests.mpim-bonn.mpg.de/storzer}}}
\email{storzer@mpim-bonn.mpg.de}
\author{Campbell Wheeler}
\address{Max Planck Institute for Mathematics \\
         Vivatsgasse 7, 53111 Bonn, Germany \newline
         {\tt \url{http://guests.mpim-bonn.mpg.de/cjwh}}}
\email{cjwh@mpim-bonn.mpg.de}
\thanks{
  {\em Key words and phrases}: 3-manifolds, knots, Kashaev invariant, 
  Teichm\"uller TQFT, complex Chern--Simons theory, hyperbolic geometry, 
  asymptotic expansions, perturbation theory, Feynman diagrams,
  Faddeev quantum dilogarithm,
  state-integrals, volume conjecture, ideal triangulations, Neumann-Zagier data,
  half-symplectic matrices, 2--3 Pachner moves, Fourier transform, pentagon. 
}

\date{11 July 2023}

\begin{abstract}
  We prove that a formal power series associated to an ideally triangulated cusped
  hyperbolic 3-manifold (together with some further choices) is a topological
  invariant. This formal power series is conjectured to agree to
  all orders in perturbation theory with two important topological invariants of
  hyperbolic knots, namely the Kashaev invariant and the Andersen--Kashaev
  invariant (also known as the state-integral) of Teichm\"uller TQFT.
\end{abstract}

\maketitle

{\footnotesize
\tableofcontents
}


\section{Introduction}
\label{sec.intro}

This paper concerns the topological invariance of a formal power series associated
to an ideally triangulated 3-manifold $M$ with a torus boundary component~\cite{DG}. 
The series is defined by formal Gaussian integration of a finite dimensional integral
(a so-called ``state-integral'')
and it is expected to coincide to all orders with the asymptotic expansion of
three important topological invariants of 3-manifolds.

The first is the Kashaev invariant of a hyperbolic
knot~\cite{K95}, where Kashaev's famous volume conjecture is refined to an
asymptotic statement to all orders in perturbation theory using the above formal
power series. This was studied in detail in~\cite{GZ:kashaev} based on extensive
numerical computations, but it is only proven for a handful of hyperbolic knots.

The second invariant is the Andersen--Kashaev state integral~\cite{AK}, whose
asymptotic expansion for the simplest hyperbolic $4_1$ knot was shown to agree with
the above series in~\cite[Sec.12]{AK}, and also observed numerically for the
first three simplest hyperbolic knots in~\cite{GZ:qseries}.
The state-integrals of~\cite{AK} are finite-dimensional integrals whose integrand
is a product of Faddeev quantum dilogarithms times an exponential of a quadratic
form, assembled from an ideal triangulation of a 3-manifold with torus boundary
components. Andersen--Kashaev proved that these are topological invariants that
are the partition function of the Teichm\"uller TQFT~\cite{AK,AK:teichmuller},
which is a 3-dimensional version of a quantization of Teichm\"uller
space~\cite{Kashaev:quantization, Fock-Chekhov}. 

A third invariant is the Chern--Simons theory with complex gauge group $\SL_2(\BC)$ .
This theory was introduced by Witten~\cite{Witten:complexCS} and studied
extensively by Gukov~\cite{Gukov:Apoly}. Although Chern--Simons theory with compact
gauge group $\mathrm{SU}(2)$ has an exact nonperturbative definition given by the
so-called Witten--Reshetikhin--Turaev invariant~\cite{Witten:Jones,RT} and a
well--defined perturbation theory involving Feynman diagrams with uni-trivalent
vertices~\cite{AS,AarhusII}, the same is not known for Chern--Simons theory with
complex gauge group $\SL_2(\BC)$. For reasons that are not entirely
understood, the partition function of complex Chern--Simons theory for 3-manifolds
with torus boundary reduces to a finite-dimensional state-integral, as if some
unknown localization principle holds. The corresponding state-integrals were
introduced and studied by Hikami~\cite{Hikami}, Dimofte~\cite{Dimofte:complexCS}
and others~\cite{DGLZ}. Unfortunately, in those works the integration contour
was not pinned down, and this problem was finally dealt with in~\cite{AK}
and, among other things, implied topological invariance of the state-integrals,
which were coined to be the partition function of Teichm\"uller TQFT.
But ignoring the contour of integration, and focusing on a critical point of the
action, which is a solution to a system of polynomial equations, allowed~\cite{DG}
to give a definition of the formal power series that are the main focus of our paper.  
Note, however, that the Feynman diagrams of~\cite{DG} involve stable graphs of
arbitrary valency, a perturbative definition of Chern--Simons theory with complex
gauge groups would involve uni-trivalent graphs. 

The above discussion points out several aspects of these formal power series
and conjectural relations to perturbation theory of complex Chern--Simons
and of Teich\"uller TQFT. Aside from conjectures, this paper concerns
a theorem, the topological invariance of the above formal power series.
  
Let us briefly recall the key ingredients that go into the definition of the
series, and discuss those in detail in later sections. The first ingredient is  
an ideal triangulation $\calT$ with $N$ ideal tetrahedra of a 3-manifold $M$ with
torus boundary. Each tetrahedron has shapes $z \in \BC\setminus\{0,1\}$, $z'=1/(1-z)$
and $z''=1-1/z$ attached to its three pairs of opposite edges, and the shapes
satisfy a system of polynomial equations (so-called ``gluing equations''~\cite{Th} or
Neumann-Zagier equations~\cite{NZ}) determined by the combinatorics
of the triangulation, one for each edge and peripheral curve. After some choices
are made (such as an ordering of the tetrahedra and their edges, a choice of shape
for each tetrahedron, a choice of an edge to remove from the gluing equations, and
a choice of peripheral curve to include), one obtains two matrices $A$ and $B$
with integer entries such that $(A\,|\,B)$ is the upper-half of a symplectic $2N \times
2N$ matrix, as well as a vector $\nu \in \BZ^N$. In addition, we choose a solution
$z=(z_1,\dots,z_N)$ of the gluing equations as well as a flattening $(f,f'')$, i.e.,
an integer solution to the equation $\nu = A f + B f''$. The power series
$\Phi^\Xi(\hbar)$ depends on the tuple $\Xi=(A,B,\nu,z,f,f'')$, which we collectively
call a NZ-datum. 

The next ingredient that goes in the definition of $\Phi^\Xi(\hbar)$ is an
auxiliary formal power series
\be
\label{psih}
  \psi_{\hbar}(x,z)
  \;:=\;
  \exp\bigg(\!-\!\!\sum_{\underset{k+\frac{\ell}{2}>1}{k,\ell\in\BZ_{\geq0}}}
  \frac{B_{k}\,x^{\ell}\,\hbar^{k+\frac{\ell}{2}-1}}{\ell!\,k!}\Li_{2-k-\ell}(z)\bigg)
  \in1+\hbar^{\frac{1}{2}}\BQ[z,(1-z)^{-1}][x]\llbracket\hbar^{\frac{1}{2}}
  \rrbracket\,.
\ee
This series, which differs from the one given in~\cite[Eq. 1.9]{DG} by a factor of
$\exp\big(\frac{1}{12}\hbar-\frac{1}{2}x\hbar^{\frac{1}{2}}\big)$, comes from
the asymptotics of the infinite Pochhammer symbol (also known as the
quantum dilogarithm)
\be
\label{poch}
(x;q)_\infty = \prod_{k=0}^\infty (1-q^k x) = \exp\Big(
-\sum_{n=1}^\infty \frac{x^n}{n(1-q^n)} \Big) \,.
\ee
Explicitly, for complex numbers $x$ and $z$ with $|z|<1$ and
$\Re(\hbar)<0$ we have (see~\cite{Zagier:dilog} and
also~\cite[Lem.2.1]{GZ:asymptotics})
\be
\label{eq.qpoch.asy.psi}
  (ze^{x\hbar^{1/2}};e^{\hbar})_{\infty}^{-1}
  \;\sim\;
  \frac{1}{\sqrt{1-z}}\exp\Big(-\frac{\Li_{2}(z)}{\hbar}
  -\frac{\Li_{1}(z)x}{\hbar^{\frac{1}{2}}}
  -\frac{\Li_{0}(z)x^2}{2}\Big)
  \psi_{\hbar}(x,z)\,, \qquad (\hbar\rightarrow 0) \,.
\ee
Assuming that the matrix $B$ is invertible, (i.e., that $\det(B) \neq 0$),
we introduce the function
\be
\label{integrand}
f^\Xi_{\hb}(x,z) \=
\exp \bigg(\frac{\hbar^{\frac12}}{2} x^t1-\frac{\hbar^{\frac12}}{2} x^t  B^{-1} \nu
+\frac\hbar8 f^t B^{-1} A f \bigg) 
\prod_{i=1}^N \psi_{\hb}(x_i,z_i) \in \BQ(z)[x]\llbracket \hbar^{\frac12} \rrbracket\,,
\ee
which is the product of one quantum dilogarithm per tetrahedron (each
with its own integration variable), 
with some additional terms coming from the NZ-datum
where $x=(x_1,...,x_N)^t$ and $z=(z_1,...,z_N)$.

The last ingredient is the formal Gaussian integration~\cite{BIZ}  
\begin{small}
\bea
\label{FGI}
\left\langle
f_\hbar(x,z)
\right\rangle_{x,\Lambda}
\;:=\; \exp\left(
{\frac 1 2 \sum_{i,j=1}^N (\Lambda^{-1})_{i,j}
\frac{\partial}{\partial x_i}
\frac{\partial}{\partial x_j}}\right)
f_\hbar(x,z) \Big|_{x=0} =
\frac{\int e^{-\frac12x^t \Lambda \,x}f_{\hb,z}(x) \, dx}{
  \int e^{-\frac12x^t \Lambda \,x} \, dx}
\in \BQ(z)\llbracket \hbar \rrbracket
\eea
\end{small}
of a power series $f_\hbar(x,z) \in \BQ(z)[x]\llbracket\hroot \rrbracket$, with
respect to an invertible matrix $\Lambda$. Assuming further that the symmetric
matrix
\be
\label{lambda}
\Lambda \= - B^{-1} A + \diag(1/(1-z_j))
\ee
is invertible, we define 

\be
\label{Phidef}
\Phi^\Xi(\hbar) \= \langle f^\Xi_{\hbar}(x,z) \rangle_{x, \Lambda}
\in \BQ(z)\llbracket \hbar \rrbracket\,.
\ee

Using a solution $z$ of the Neumann--Zagier equations that corresponds to
the discrete faithful representation $\rho^\geom: \pi_1(M) \to \PSL_2(\BC)$
of a cusped hyperbolic 3-manifold $M$, one obtains a series $\Phi^\Xi(\hbar)$
with coefficients in the invariant trace field of $M$.
Our main theorem is the following.

\begin{theorem}
\label{thm.1}
$\Phi^\Xi(\hbar)$ is a topological invariant of a cusped hyperbolic 3-manifold.
\end{theorem}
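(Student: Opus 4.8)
The plan is to proceed in two stages, in the manner standard for triangulation-based invariants. First, for a fixed ideal triangulation $\calT$ with a fixed solution $z$ of the Neumann--Zagier equations (in particular, the geometric one), I would show that $\Phi^\Xi(\hbar)$ does not depend on the remaining choices going into the NZ-datum $\Xi=(A,B,\nu,z,f,f'')$: the ordering of the tetrahedra and of their edges, the choice of shape $z_j$, $z_j'$ or $z_j''$ per tetrahedron, which edge equation is deleted, which peripheral curve is kept, and the flattening $(f,f'')$. Second, invoking the Matveev--Piergallini theorem that any two ideal triangulations of $M$ are connected by a finite sequence of $2$--$3$ Pachner moves and their inverses, I would prove invariance of $\Phi^\Xi(\hbar)$ under a single $2$--$3$ move. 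Throughout it is cleanest to treat $z$ as a variable on the gluing variety --- so that $\Phi^\Xi(\hbar)\in\BQ(z)\llbracket\hbar\rrbracket$ genuinely --- and to specialize to $z=z^{\geom}$ only at the very end; this also avoids having to check that the intermediate triangulations produced by Matveev--Piergallini are geometric.

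For the first stage, I would recast the formal Gaussian integration~\eqref{FGI} as a formal Fourier transform, under which the various changes of choice act on $(A\,|\,B)$, $\nu$, $f$, $f''$ by explicit elements of $\mathrm{Sp}(2N,\BZ)$: a permutation of tetrahedra acts by simultaneous row/column permutations; the triality $z_j\mapsto z_j'\mapsto z_j''$ acts by a local $\SL_2(\BZ)$-transformation in the $j$-th coordinate pair; changing the deleted edge or the peripheral curve amounts to left-multiplying $(A\,|\,B)$ by an element of $\GL(N,\BZ)$, via the Neumann--Zagier bilinear relations; and changing the flattening translates $(f,f'')$ within the relevant lattice. The point of the Fourier-transform reformulation is that $\mathrm{Sp}(2N,\BZ)$ acts on such Gaussian-type integrals through a formal, metaplectic-like representation, so the desired covariance of $\Phi^\Xi$ is essentially built in; one then needs, in addition, only the functional equation of the auxiliary series $\psi_\hbar(x,z)$ of~\eqref{psih} under the triality, which follows from the inversion and quasi-periodicity relations of the Pochhammer symbol~\eqref{poch}. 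This reformulation should moreover let one drop the hypothesis $\det B\neq 0$, so that $\Phi^\Xi$ is defined for every choice.

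The second stage --- invariance under a $2$--$3$ move --- is the crux and the main obstacle. Such a move replaces two tetrahedra glued along a triangle by three tetrahedra glued along an edge, so $N\mapsto N+1$; one must express the new NZ-datum $\Xi'$ in terms of $\Xi$ (the bookkeeping is essentially Neumann's, and the effect on the edge and cusp equations is local), and then show that performing the formal Gaussian integration over the single extra variable carries $f^{\Xi'}_\hbar$ to $f^{\Xi}_\hbar$ up to the change of Gaussian data recorded in~\eqref{lambda}. The analytic input is the pentagon identity for the quantum dilogarithm $(x;q)_\infty$, rendered in perturbative form via~\eqref{eq.qpoch.asy.psi} as a ``Gaussian-integrated pentagon'' for $\psi_\hbar(x,z)$. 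The real difficulty is that three things must be matched simultaneously: the quadratic form controlling the Gaussian integration, the linear-and-constant prefactor built from $\nu$, $f$, $f''$, and the product of dilogarithms $\prod_i\psi_\hbar(x_i,z_i)$; the Fourier-transform and pentagon packaging of the first stage is precisely what makes this simultaneous matching tractable. Degenerate configurations where $B'$ or $\Lambda'$ would fail to be invertible are handled either by the Fourier reformulation or by composing two moves to route around them.

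Once invariance under all the choices and under $2$--$3$ moves is established, Matveev--Piergallini gives that $\Phi^\Xi(\hbar)$ depends only on $M$ together with the chosen component of the gluing variety, and specializing to $z=z^{\geom}$ produces the asserted invariant, with coefficients in the invariant trace field of $M$. I expect the $2$--$3$ move --- and within it the simultaneous matching of Gaussian data and prefactor through the perturbative pentagon --- to consume the bulk of the argument.
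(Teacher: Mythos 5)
Your overall architecture (elementary choices, quad/triality changes, then a single $2$--$3$ move, with a Fourier-type identity and a pentagon identity for $\psi_\hbar$ doing the work) matches the paper's, but there are two genuine gaps. First, the reduction to Matveev--Piergallini is not enough, and ``treating $z$ as a variable on the gluing variety'' does not repair it: under a $2$--$3$ move the shapes transform by a rational map that can send a nondegenerate solution to $0$, $1$ or $\infty$, so the intermediate triangulations in a Matveev--Piergallini path need not carry any nondegenerate solution (need not ``see'' $\rho^{\geom}$ at all), and it is an open question whether the set of triangulations seeing $\rho^{\geom}$, or the set of nondegenerate NZ data, is connected under $2$--$3$ moves. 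The paper circumvents this by restricting to regular ideal triangulations of the Epstein--Penner canonical cell decomposition (following~\cite{DG,GHRS}): these are connected under $2$--$3$ moves and always see $\rho^{\geom}$ with nondegenerate shapes, and nonvanishing of $\det(\Lambda)$ at $\rho^{\geom}$ is supplied by the $1$-loop invariant and Thurston's Dehn surgery theorem. Without some such input your argument only proves invariance along Pachner paths that happen to stay nondegenerate, which is not known to connect arbitrary triangulations.

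Second, the two key identities are asserted rather than obtainable by the routes you indicate. The quad change does not follow from inversion and quasi-periodicity of the Pochhammer symbol: replacing $\psi_\hbar(\bullet,z)$ by $\psi_\hbar(\bullet,z')$ or $\psi_\hbar(\bullet,z'')$ inside the bracket requires the genuine formal Gaussian Fourier-transform identities (Theorem~\ref{thm.fourier} and Corollary~\ref{cor.fourier}), and the inversion formula (Lemma~\ref{lem.inversion}) is explicitly not used in the paper. Likewise, ``rendering the analytic pentagon in perturbative form via~\eqref{eq.qpoch.asy.psi}'' presupposes that one may exchange the asymptotic expansion of the Faddeev dilogarithm with the state-integral; the paper notes this would require additional analytic work and instead proves both identities algebraically, by $q$-holonomic arguments (uniqueness of solutions of the relevant $q$-difference equations in $\BQ\llbracket z^{-1}\rrbracket\llbracket\hbar\rrbracket$ plus evaluation at $z=\infty$). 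Relatedly, your hope that a metaplectic reformulation lets you drop $\det B\neq 0$ is unsubstantiated; the paper keeps that hypothesis and instead handles the possible rank drop of $B$ across a $2$--$3$ move by an explicit case analysis (nullity $0$, $1$, $2$), interleaving quad moves, the pentagon, and Fubini, which is where much of the actual work lies.
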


A corollary of the above theorem is that every coefficient of the above
series is a topological invariant of a cusped hyperbolic 3-manifold. These
topological invariants have a geometric origin, since they take values in
the invariant trace field of the manifold, and hence they are algebraic, but
not (in general) rational, numbers. This series seems to come from 
hyperbolic geometry (though a definition of these invariants in terms of the
hyperbolic geometry of the 3-manifold is not known), and perhaps not from enumerative
quantum field theory (such as the Gromov--Witten or any of the 4-dimensional
known theories). The behavior of these series under finite cyclic coverings of
cusped hyperbolic 3-manifolds is given in~\cite{GY:1loop.torsion}, and the
formulas presented there (e.g., for the coefficient of $\hbar^2$) point to
unknown connections with the spectral theory of hyperbolic 3-manifolds.

The proof of our theorem combines the ideas of the topological invariance of the
Andersen--Kashaev state-integrals~\cite{AK} with those of the Aarhus
integral~\cite{AarhusII}. We briefly recall that the building block for the
state-integral is the Faddeev quantum
dilogarithm, the state-integral is a finite-dimensional integral of a product of
Faddeev quantum dilogarithm~\cite{Faddeev}, one for each tetrahedron of an (ordered)
ideal triangulation. Aside from elementary choices, two important parts
in the proof of topological invariance of the state-integral is
invariance under (a) the choice of a nondegenerate quad, and
(b) 2--3 ordered Pachner moves, the latter connecting one ideal triangulation
with another. In~\cite{AK}, (a) and (b) are dealt with a Fourier transform and
a pentagon identity for the Faddeev quantum dilogarithm.

The power series $\Phi^\Xi(\hbar)$ is given instead by a formal Gaussian integral
(as opposed to an integral over Euclidean space), where the building block is
the formal power series $\psi_\hbar$ instead of the Faddeev
quantum dilogarithm. The topological invariance of the $\Phi^\Xi(\hbar)$ under
the choice of quad and under the 2--3 Pachner moves follows from a Fourier tranform
identity and a pentagon identity for $\psi_\hbar$; see Theorems~\ref{thm.fourier}
and~\ref{thm.pentagon} below. Although these identities are, in a sense, limits of
the corresponding identities for the Faddeev quantum dilogarithm (just as
$\psi_\hbar$ is a limit of the Faddeev quantum dilogarithm), it would requires
additional analytic work to do so, and instead we give algebraic proofs of
theorems~\ref{thm.fourier} and~\ref{thm.pentagon} using properties of the
formal Gaussian integration, together with holonomic properties of the involved
formal power series.

Having discussed the similarities between the proof of Theorem~\ref{thm.1} and
the corresponding theorem for the state-integral of~\cite{AK}, we now point out
some differences. In~\cite{AK}, Andersen--Kashaev use ordered triangulations,
and the state-integral is
obtained by the push-forward of a distribution with variables at the faces and
the tetrahedra of the ordered triangulation. Part of the distribution is a product
of delta functions in linear forms of the face-variables. In our Theorem~\ref{thm.1},
and in the formal Gaussian integration, we carefully avoided the need to use
delta functions, although such a reformulation of our results are possible, with
additional effort.

We end this section by pointing out a wider context for the asymptotic series
$\Phi^\Xi(\hbar)$ and Theorem~\ref{thm.1}. It was clear from~\cite{DG} that
a NZ-datum $\Xi=(A,B,\nu,z,f,f'')$ depends on two square matrices $A$ and $B$
such that $AB^t$ is symmetric that may or may not come from topology, and that
the series $\Phi^\Xi(\hbar)$ is defined under the assumption that $\det(B) \neq 0$
and $\det(\Lambda) \neq 0$. Doing so, the proof of Theorem~\ref{thm.1} shows that
the series $\Phi^\Xi(\hbar)$ is invariant under the moves that appear
in~\cite[Sec.6]{GZ:kashaev}; see also Section~\ref{sec.elementary}. 


\section{Preliminaries}
\label{sec.preliminaries}

\subsection{The Faddeev quantum dilogarithm}
\label{sub.faddeev}

In this subsection, we recall some basic properties of the Faddeev quantum
dilogarithm, which are motivations for Theorems~\ref{thm.fourier}
and~\ref{thm.pentagon} below. At the same time, we will ignore additional
properties of the Faddeev quantum dilogarithm that play no role in our paper,
such as the fact that it is a meromorphic function with precise
zeros, poles and residues.
  

The Faddeev quantum dilogarithm~\cite{Faddeev} $\vphi:=\fad{}$ satisfies a key
integral pentagon identity~\cite{FK:QDL} 
\be
\label{phipentagon}
  e^{2 \pi i x y} \widetilde\vphi(x)\widetilde\vphi(y) 
  =\int_\BR e^{\pi i z^2} \widetilde\vphi(x-z)\widetilde\vphi(z)\widetilde\vphi(y-z)
  \operatorname{d}\!z 
\ee
where both sides are tempered distributions on $\BR$ and $\widetilde\vphi$
denotes the distributional (inverse) Fourier transformation
\begin{equation}\label{phifourier}
\widetilde\vphi(x):=\int_\BR e^{-2 \pi i x y} \vphi(y) \operatorname{d}\!y \,.
\end{equation}
It turns out that the inverse Fourier transform of $\vphi^{\pm 1}$ is expressed
in terms of $\vphi$ as given in~\cite[Sec.13.2]{AK}
\be
\label{fff}
\begin{aligned}
\int_\BR e^{-2 \pi i x y} \vphi(y) \operatorname{d}\!y &=
\zeta_8 (q/\tq)^{\frac{1}{24}} e^{-\pi i x^2} \vphi(-x+c_\bb) \\
\int_\BR e^{-2 \pi i x y} \vphi(y)^{-1} \operatorname{d}\!y &=
\zeta_8^{-1} (\tq/q)^{\frac{1}{24}} e^{\pi i x^2} \vphi(x-c_\bb) 
\end{aligned}
\ee
where $q=e^{2 \pi i \bb^2}$, $\tq=e^{-2\pi i/\bb^2}$, $c_\bb=\frac{i}{2}(\bb+\bb^{-1})$
and $\zeta_8=e^{2 \pi i/8}$. 
The Faddeev quantum dilogarithm satisfies the inversion formula 
\be
\label{doublephi}
\vphi(x) \vphi(-x) = (\tq/q)^{\frac{1}{24}} e^{\pi i x^2}
\ee
see for example~\cite[App.A]{AK}.
In a certain domain, the Faddeev quantum dilogarithm is given as a ratio
of two Pochhammer symbols 

\be
\fad(x) \= \frac{(-q^{\frac{1}{2}} e^{2 \pi i \bb x};q)_\infty}{
  (-\tq^{\frac{1}{2}} e^{2 \pi i \bb^{-1} x};\tq)_\infty} \,,
\ee

\noindent
from which it follows that its asymptotic expansion as $\bb \to 0$
is given by replacing the denominator by $1$ and the numerator by the
asymptotic expansion of the Pochhammer symbol.

\subsection{Neumann--Zagier data}
\label{sub.NZ}

In this section, we discuss in detail Neumann--Zagier data following~\cite{DG}.
We start with a 3-manifold $M$ with torus boundary component (all manifolds and their
triangulations will be oriented throughout the paper) equipped with
a concrete oriented ideal triangulation, that is with a triangulation such that
each tetrahedron $\Delta$ of $\calT$ comes with a bijection of its vertices with
those of the standard 3-simplex. (All triangulations that are used in the computer
programs \texttt{SnapPy}~\cite{snappy} and \texttt{Regina}~\cite{regina} are concrete).

Every concrete tetrahedron $\Delta$ has shape parameters $(z,z',z'')$ assigned to
pairs of opposite edges as in Figure~\ref{fig.tetshapes}, where the complex numbers
$z'=1/(1-z)$ and $z''=1-1/z$ satisfy the equations
\be
\label{3z}
z''+z^{-1}=1, \qquad z z' z'' =-1 \,.
\ee

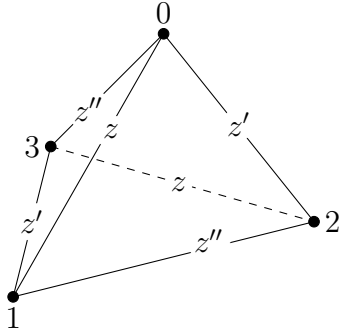
\begin{figure}[htpb!]
\begin{center}  
\begin{tikzpicture}[baseline=-3] 
    
\filldraw (2,3.5) circle (2pt) node[above] {$0$};
\filldraw (0,0) circle (2pt) node[below] {$1$};
\filldraw (4,1) circle (2pt) node[right] {$2$};
\filldraw (0.5,2) circle (2pt) node[left] {$3$};;
\draw (0,0) -- (2,3.5);
\draw (0,0) -- (4,1);
\draw (0,0) -- (0.5,2);
\draw (4,1) -- (2,3.5);
\draw (0.5,2) -- (2,3.5);
\draw[dashed] (0.5,2) -- (4,1);

\filldraw[white] (1.3,2.2) circle (6pt);
\filldraw[white] (2.2,1.5) circle (5pt);
\filldraw[white] (3,2.3) circle (6pt);
\filldraw[white] (0.25,1) circle (7pt);
\filldraw[white] (2.6,0.75) circle (8pt);
\filldraw[white] (1,2.5) circle (8pt);

\node at (1.3,2.2) {$z$};
\node at (2.2,1.5) {$z$};
\node at (3,2.3) {$z'$};
\node at (0.25,1) {$z'$};
\node at (2.6,0.75) {$z''$};
\node at (1,2.5) {$z''$};

\end{tikzpicture}
  \caption{The shapes of an ideal tetrahedron.}
\label{fig.tetshapes}
\end{center}
\end{figure}

If $\calT$ is a triangulation as above, an Euler characteristic argument shows
that the number of tetrahedra equals to the number of edges. Fix an ordering
of the tetrahedra $\Delta_j$ for $j=1,\dots,N$ and of the edges $e_1,\dots,e_N$
of $\calT$, and assign a shape $z_j$ to the tetrahedron $\Delta_j$ for $j=1,\dots,N$.
To describe the complete hyperbolic structure of $M$ (when it exists) and its
deformations, Thurston~\cite{Th} introduced the gluing equations
for the variables $z=(z_1,\dots,z_N)$ 
around each edge $e_i$, for $i=1,\dots, N$. In logarithmic form, these equations
have the form
\be
\label{3G}
\sum_{j=1}^N G_{i,j} \log z_j + G'_{i,j} \log z'_j + G''_{i,j} \log z''_j \= 2 \pi i,
\qquad (i=1, \dots, N)
\ee
where $G_{i,j}$ (and likewise for $G'_{i,j}$ and $G''_{i,j}$) denote the number
of times that an edge of $\Delta_j$ labelled $z_j$ winds around the edge $e_i$.
Every peripheral curve $\gamma$ in the boundary also gives rise to a gluing
equation of the same form as~\eqref{3G}, except the right hand side is $0$
instead of $2 \pi i$. Choosing a symplectic basis for $H_1(\partial M,\BZ)$, we
can enhance the equations~\eqref{3G} by adding the peripheral equations
\be
\label{3Gb}
\sum_{j=1}^N G_{N+c,j} \log z_j + G'_{N+c,j} \log z'_j + G''_{N+c,j} \log z''_j
\= 0, \qquad (c=1,2) \,.
\ee
It turns out that the $(N+2) \times N$ matrices $G$, $G'$ and $G''$ have both
symmetry and redundancy. Any one of the edge equations is implies by the others,
and we make a choice to remove one of them, and replace it by one peripheral
equation for a fixed peripheral curve, resulting into $N \times N$ matrices
$\mathbf G$, $\mathbf G'$ and $\mathbf G''$. Using the second
Equation~\eqref{3z} in logarithmic form $\log z_j + \log z'_j + \log z''_j = \pi i$,
we can eliminate one of the three shapes of each tetrahedron (this is a choice of
quad). For example, eliminating the shape $z'_j$ for $j=1,\dots,N$ then results
into a system of equations
\be
\label{3Gc}
\sum_{j=1}^N A_{i,j} \log z_j + B_{i,j} \log z''_j \= \pi i \nu
\qquad (i=1, \dots, N)
\ee
where
\be
\label{AB}
A \= \mathbf G - \mathbf G', \qquad B \= \mathbf G'' - \mathbf G'
\ee
are the Neumann--Zagier matrices~\cite{NZ} and
$\nu = (2,\dots,2,0)^t -\mathbf G' (1,\dots,1,1)^t \in \BZ^N$.
The Neumann--Zagier matrices $(A\,| \,B)$ have an important symplectic property,
they are the upper part of a symplectic matrix over $\BZ[1/2]$ (and even a symplectic
matrix over $\BZ$ if one divides the peripheral gluing equation by 2 while keeping
the integrality of its coefficients)~\cite{NZ}. It follows that $A B^t$ is
symmetric, that $(A \,|\, B)$ has full rank, and that one can always choose a
quad such that $B$ is invertible--for this see~\cite[Lem.A.3]{DG}.

A further ingredient of a Neumann--Zagier datum is a flattening, that is a triple
$(f,f',f'')$ of vectors in $\BZ^N$ that satisfy the conditions
\be
\label{3f}
f + f' + f''=(1,\dots,1)^t, \qquad 
\mathbf G f + \mathbf G' f' + \mathbf G'' f'' = (2,\dots,2,0)^t \,.
\ee
Using our choice of quad, we can eliminate $f'$ and thus obtain a pair
$(f,f'')$ of vectors in $\BZ^N$ that satisfy the condition
\be
\label{2f}
A f + B f'' = \nu \,.
\ee
This defines all the terms that appear in a NZ-datum $\Xi=(A,B,\nu,z,f,f'')$. 
The definition of the series $\Phi^\Xi(\hbar)$ requires a nondegenerate NZ datum
$\Xi$, that is one that satisfies the condition $\det(B) \neq 0$, which as
we discussed above, can always be achieved, as well as the condition
$\det(\Lambda) \neq 0$. We discuss this choice next, and connect it to
the geometric representation of a hyperbolic 3-manifold $M$. 

We end this section with a comment regarding Neumann--Zagier data of 3-manifolds
with several (as opposed to one) torus boundary components. Their ideal
triangulations with equal number $N$ of tetrahedra as edges and the edge gluing
equations have the same shape~\eqref{3G} as above. If $r$ denotes the number of
boundary components, then there are $2r$ peripheral equations~\eqref{3Gb} and after
a choice of one peripheral curve per boundary component, this leads to $(N+r)\times N$
matrices $G$, $G'$ and $G''$. The edge gluing equations have redundancy, and although
it is not true that we can remove any $r$ of them, it is shown
in~\cite[Sec.4.6]{GHRS} that one can remove $r$ of them and and replace them by
$r$ peripheral equations so as to obtain $N \times N$ matrices 
$\mathbf G$, $\mathbf G'$ and $\mathbf G''$ such that the corresponding matrices
$(A \,|\,B)$ defined in~\eqref{AB} have the same symplectic properties as in the
case of $r=1$. Moreover, any two choices of removal of the $r$ edge equations are
related to each other by an invertible matrix in $\GL_r(\BZ)$. Finally, flattenings
satisfy Equations~\eqref{3f} where the right hand side of the second equation
is the vector $(2,\dots,2,0,\dots,0)^t \in \BZ^{N+r}$ with the first $N$ coordinates
equal to $2$ and the remaining $r$ coordinates equal to zero.

For simplicity in the presentation (related to the choice of peripheral curves and
the flattenings), we will give the proof of Theorem~\ref{thm.1} assuming that the
3-manifold $M$ has one cusp. The proof applies verbatim to the general case of
arbitrary number of cusps.

\subsection{Geometric aspects}
\label{sub.geometric}

In this section, we discuss some geometric aspects of Theorem~\ref{thm.1} related
to the choice of the shapes $z$ in a Neumann--Zagier datum. Let us fix an ideal
triangulation $\calT$. A solution $z \in (\BC\setminus\{0,1\})^N$ to the
Neumann--Zagier equations gives rise, via a developing map, to a representation
$\rho_z: \pi_1(M) \to \PSL_2(\BC)$. For a detailed discussion, see the appendix
of~\cite{BoydII}. However, not every representation $\rho: \pi_1(M) \to \PSL_2(\BC)$ is
``seen'' by $\calT$, that is, is in the image of the map $z \to \rho_z$.
What's more, if $\rho$ is in the image of the above map and we do a 2--3 Pachner
move on the triangulation, it may no longer be in the image of the map corresponding
to the new triangulation. The reason is that the shapes of the two triangulations
are related by a rational map, which may send a complex number different from $0$
and $1$ to $0$, $1$ or $\infty$. To phrase the problem differently, every two ideal
triangulations (each with at least two tetrahedra, as we will always assume) of
a 3-manifold with non-empty boundary are related by a sequence of 2--3 Pachner
moves~\cite{Matveev,Piergallini}. However, it is not known that the set of
ideal triangulations that see the discrete faithful representation
$\rho^\geom : \pi_1(M) \to \PSL_2(\BC)$ is connected under 2--3 Pachner moves,
nor is it known whether the set of nondegenerate NZ data is connected under
2--3 Pachner moves. 

A solution to these issues was found in~\cite{DG}, and this was used to
prove the topological invariance of the 1-loop function, and was also used
in~\cite{GHRS} to prove the topological invariance of the 3D-index. Let us recall
the geometric details here. Every cusped hyperbolic 3-manifold $M$ (complete,
finite volume) has a canonical ideal cell decomposition whose cells are
3-dimensional convex ideal polytopes given by Epstein--Penner~\cite{EP}. It is easy to
see that every convex ideal polyhedron can be triangulated into ideal
tetrahedra by connecting an ideal vertex to all other ideal vertices (thus
reducing the problem to the ideal cone of an ideal polygon), and then triangulating
further the ideal polygon into ideal triangles. Doing so, the triangulation of
the common faces of the 3-dimensional convex ideal polytopes may not match, in
which case one can pass from one triangulation of a polygonal face to another by
adding flat tetrahedra. 

The question is whether every two such triangulations are related by a sequence of
2--3 moves. This is a combinatorial problem of convex geometry, which we summarize
below. For a detailed discussion, the reader may consult the book~\cite{LRS}
and references therein.

Fix a convex polytope $P$ in $\BR^d$. One can consider the set of
triangulations of $P$. When $d=2$, $P$ is a polygon and it is known that 
every two triangulations are related by a sequence of flips. For general
$d$, flips are replaced by {\em geometric bistellar moves}. When $d \geq 5$,
it is known that the graph of triangulations (with edges given
by geometric bistellar flips) is not connected, and has isolated vertices.
For $d=3$, it is not known whether the graph is connected.

The situation is much better when one considers {\em regular triangulations}
of $P$. In that case, the corresponding graph of regular triangulations
is connected and, in fact, it is the edge set of the {\em secondary polytope} of
$P$. When $d=3$ and $P$ is convex and in general position, then the only
geometric bistellar move is the 2--3 move where the added edge that appears in
the move is an edge that connects two vertices of $P$. When $d=3$ and $P$ is
not in general position, the same conclusion holds as long as one allows for
tetrahedra that are flat, i.e., lie on a 2-dimensional plane.

Returning to the Epstein-Penner ideal cell decomposition, let $\calT^{\mathrm{EP}}$
denote a regular ideal triangulations of the canonical ideal cell decomposition
of $M$. (For a detailed discussion of this set, see also~\cite[Sec.6]{GHRS}).
The set $\{\calT^{\mathrm{EP}}\}$ of regular ideal triangulations
is connected by 2--3 Pachner moves. Moreover, such triangulations see $\rho^\geom$
since by the geometric construction, the shapes are always nondegenerate, i.e.,
not equal to $0$ or $1$ and in fact always have nonnegative
(though sometimes zero) imaginary part.

Finally, we need to show that $\det(\Lambda)$ is nonzero. This follows from the
fact that\\ $\det(B)\det(\Lambda)=\det(-A+B\diag(1/(1-z_j))$ equals (up to
multiplication by a monomial in $z$ and $z''$) to the 1-loop invariant
of~\cite[Sec.1.3]{DG}. The nonvanishing of the latter follows from
Thurston's hyperbolic Dehn surgery theorem~\cite{Th}, which implies that
$\rho^\geom \in X_M^\geom \cup P_M$ is an isolated smooth point of the geometric
component $X^\geom_M$ of the $\PSL_2(\BC)$-character variety of $M$, intersected
with the locus $P_M$ of boundary-parabolic $\PSL_2(\BC)$-representations, i.e.,
representations $\rho$ satisfying $\mathrm{tr}(\rho(\gamma))^2=4$ for all peripheral
elements $\gamma \in \pi_1(M)$. 
Since the 1-loop invariant is the determinant of the Hessian of the defining
NZ equations of $\rho^\geom$, it follows that the 1-loop invariant is nonzero. 


\section{Formal Gaussian integration}
\label{sec.fgi}

\subsection{Basics on formal Gaussian integration}
\label{sub.prelim}

In this section, we review the basic properties of formal Gaussian integration,
which is a combinatorial analogue of integration of analytic functions. This theory
was introduced and studied in detail in~\cite{AarhusII}, where it was used to
define a universal perturbative finite type invariant of rational homology 3-spheres,
and to identify it with the trivial connection contribution of Chern--Simons
perturbation theory. 

As a warm-up, the formal Gaussian integral of a monomial $x^n$ in one variable
with respect to the quadratic form $\lambda \neq 0$ is defined by
\bea
\label{formalG1}
\left\langle x^n \right\rangle_{x,\lambda}
\= \begin{cases}
  \lambda^{-n/2} (n-1)!! & n \,\, \text{even} \\
  0 & n \,\, \text{odd} \,.
\end{cases}
\eea
When $\lambda>0$, then the above bracket equals to a normalized Gaussian integral
\be
\left\langle x^n \right\rangle_{x,\lambda} \=
\frac{\int_\BR e^{-\frac{1}{2}\lambda x^2} x^n \, dx}{
  \int_\BR e^{-\frac{1}{2}\lambda x^2} \, dx} \,,
\ee
explaining the naming of formal Gaussian integration. The formal Gaussian integration
can be extended by linearity to $\langle f(x) \rangle_{x,\lambda}$ for any polynomial
$f(x)$, or even further to a power series $f(x) = \sum_{n \geq 0} a_n x^n$ whose
coefficients tend to zero in a local ring (such as the ring
$\BQ\llbracket \hroot \rrbracket$).

The formal Gaussian integral~\eqref{formalG1} has a multivariable extension given
in Equation~\eqref{FGI} where $x$ is a vector of variables and $\Lambda$ is
an invertible matrix over a field matching the size of $x$.
Throughout this paper, the entries of $\Lambda$ are elements of the field $\BQ(z)$
where $z$ is a vector of variables, the integrable functions $f_\hbar(x,z)$ that
we apply the formal Gaussian integration with respect to $x$ with be elements of
$\BQ(z)[x]\llbracket \hroot \rrbracket$, and the result of formal Gaussian integration
will be an element of the ring $\BQ(z)\llbracket \hbar \rrbracket$ or
$\BQ(z)[x]\llbracket \hbar^{\frac{1}{2}} \rrbracket$. When we specialize
to the case of a vector of algebraic numbers $z$, then $\BQ(z)$ defines the
corresponding number field.    

Just like integration of sufficiently smooth functions satisfies certain invariance
properties (such as change of variables, iterated integration, and even integration
by parts~\cite{AarhusII}), so does formal Gaussian integration. The corresponding
identities of formal Gaussian integration are combinatorial statements about
polynomials or rational functions and often follow from the corresponding statements
of integration of functions. 

We now give some elementary properties of formal Gaussian integration that we will
use in our paper.

\begin{lemma}
\label{lem.ff1}
\rm{(a)} For all invertible matrices $\Lambda$ and $P$ over $\BQ(z)$, we have
\bea
\left\langle
f_{\hbar}(Px,z)
\right\rangle_{x, P^t \Lambda P}
\= \left\langle f_{\hbar}(x,z)\right\rangle_{x,\Lambda}\,.
\eea
\rm{(b)} For all invertible matrices $\Lambda$ and vectors $G$ over $\BQ(z)$,
we have
\be
\label{fga1}
\langle\exp(-G^{t}\Lambda x \hroot) f_{\hbar}(x+G \hroot,z)
\rangle_{x,\Lambda} \= \exp\left(\frac{G^{t}\Lambda G}{2}\hbar\right)
\langle f_{\hbar}(x,z)\rangle_{x,\Lambda}
\ee
\rm{(c)} If
$\Lambda=\left(\begin{smallmatrix} A & B \\ B^t & C \end{smallmatrix}\right)$,
and $\Lambda$ and $A$ invertible, then for any $F$, we have 
\bea
\label{ff1}
\left\langle
\exp(F x' \hroot)f_{\hbar}(x'',z)
\right\rangle_{x,\Lambda}
\= \exp\left(\frac{FA^{-1}F^t}{2}\hbar \right)
\left\langle
\exp(-FA^{-1}B x''\hroot) f_{\hbar}(x'',z)
\right\rangle_{x'',C-B^t A^{-1} B} \!\!.
\eea
\end{lemma}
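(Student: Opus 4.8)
All three identities are the standard change-of-variables, translation, and iterated-integration properties of formal Gaussian integration, and I would prove them in the order (a), (b), (c), deriving (c) from the first two. The guiding principle throughout is that for a fixed $f_\hbar(x,z)\in\BQ(z)[x]\llbracket\hroot\rrbracket$ the coefficient of each power of $\hroot$ in $\langle f_\hbar\rangle_{x,\Lambda}$ is a universal expression, polynomial in the entries of $\Lambda^{-1}$ applied to finitely many Taylor coefficients of $f$; hence any identity between such expressions that holds for all real positive-definite $\Lambda$ and all real values of the auxiliary matrices $P,G,F$ — where the honest normalized Gaussian-integral formula in~\eqref{FGI} applies — holds identically over $\BQ(z)$. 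Equivalently one argues directly from the operator definition $\langle g\rangle_{x,\Lambda}=\exp\!\big(\tfrac12\sum_{i,j}(\Lambda^{-1})_{i,j}\partial_i\partial_j\big)g\big|_{x=0}$. For (a), the substitution $y=Px$ in~\eqref{FGI} has a Jacobian cancelling between numerator and denominator, and $x^tP^t\Lambda Px=y^t\Lambda y$; the density principle then upgrades the positive-definite case to all invertible $\Lambda,P$ over $\BQ(z)$. (Directly: the chain rule gives $\partial_x=P^t\partial_y$, whence $\partial_x^t(P^t\Lambda P)^{-1}\partial_x=\partial_y^t\Lambda^{-1}\partial_y$, and evaluation at $x=0$ is evaluation at $y=0$.)

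For (b), completing the square gives $-\tfrac12 x^t\Lambda x-G^t\Lambda x\hroot=-\tfrac12(x+G\hroot)^t\Lambda(x+G\hroot)+\tfrac12 G^t\Lambda G\,\hbar$, so the substitution $y=x+G\hroot$ converts the left-hand side of~\eqref{fga1} into $\exp(\tfrac12 G^t\Lambda G\hbar)$ times the normalized integral of $f_\hbar(y,z)$. More conceptually, (b) is the source-shift identity $\langle e^{c^tx}h(x)\rangle_{x,\Lambda}=e^{\frac12 c^t\Lambda^{-1}c}\langle h(x+\Lambda^{-1}c)\rangle_{x,\Lambda}$ — itself immediate from the operator definition and $\partial_i e^{c^tx}=c_ie^{c^tx}$ — applied with $c=-\Lambda G\hroot$ and $h(x)=f_\hbar(x+G\hroot,z)$, for which $\Lambda^{-1}c=-G\hroot$ and $c^t\Lambda^{-1}c=G^t\Lambda G\,\hbar$. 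Since every shift and every source carries a factor $\hroot$, each $\hbar$-coefficient of both sides involves only finitely many derivatives of the polynomial $f$, so the manipulation is legitimate.

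For (c), I would block-diagonalize $\Lambda$ using (a): with $P=\left(\begin{smallmatrix}I&-A^{-1}B\\0&I\end{smallmatrix}\right)$ and $A=A^t$ one checks $P^t\Lambda P=\left(\begin{smallmatrix}A&0\\0&C-B^tA^{-1}B\end{smallmatrix}\right)$, and writing $x=Py$, i.e.\ $x'=y'-A^{-1}By''$ and $x''=y''$, part (a) rewrites the left-hand side of~\eqref{ff1} as $\langle \exp(Fy'\hroot)\exp(-FA^{-1}By''\hroot)f_\hbar(y'',z)\rangle_{y,\,P^t\Lambda P}$. Since $P^t\Lambda P$ is block diagonal, formal Gaussian integration is iterated (the operator splits into commuting pieces in $y'$ and $y''$, and the two evaluations at $0$ commute), and the integrand is a product of a function of $y'$ and a function of $y''$; hence the expression equals $\langle\exp(Fy'\hroot)\rangle_{y',A}\cdot\langle\exp(-FA^{-1}By''\hroot)f_\hbar(y'',z)\rangle_{y'',\,C-B^tA^{-1}B}$. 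Finally $\langle\exp(Fy'\hroot)\rangle_{y',A}=\exp(\tfrac12 FA^{-1}F^t\hbar)$ is the elementary exponential-linear Gaussian integral (the source-shift identity with $h\equiv1$), and this gives~\eqref{ff1}.

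\textbf{Main difficulty.} No single step is computationally hard; the delicate point is the formal bookkeeping. One must check that all the manipulations above respect the $\hroot$-grading, so that the infinite exponential series of differential operators, and the scalar factors $\exp(\cdot\,\hbar)$, genuinely act coefficientwise on $\BQ(z)[x]\llbracket\hroot\rrbracket$ — this is exactly where the polynomiality of $f$ in $x$ and the appearance of $\hroot$ in every shift and source are used — and one must confirm that the reduction to the positive-definite real case is applied only to expressions that are manifestly rational in the matrix entries. The remainder is the linear algebra already present in~\cite{AarhusII}.
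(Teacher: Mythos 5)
Your proposal is correct and follows essentially the same route as the paper: part (a) is the linear change of variables, part (b) is the affine change of variables (completion of the square / source-shift), and part (c) is Fubini-type iterated integration combined with~\eqref{fga1} — your block-diagonalization via $P=\left(\begin{smallmatrix}I&-A^{-1}B\\0&I\end{smallmatrix}\right)$ is exactly the mechanism behind the Fubini statement of~\cite[Prop.2.13]{AarhusII} that the paper cites. The paper's proof is a three-line citation of these facts, so your write-up simply supplies the details it omits.
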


\begin{proof}
Part (a) follows from the fact the integration is unchanged
under a linear change of variables.

Part (b) follows from the fact the integration is unchanged
under an affine change of variables.

Part (c) follows from Fubini's theorem~\cite[Prop.2.13]{AarhusII}, combined
with Equation~\eqref{fga1}.
\end{proof}

The next lemma, which will be important in the application of $q$-holonomic
methods in Section~\ref{sub.qhol} and in the proofs of Theorems~\ref{thm.fourier}
and~\ref{thm.pentagon} below, concerns the behavior of formal Gaussian
integration when $z=(z_1,\dots,z_r)$ is shifted to
$e^{\ve \hbar}z:=(e^{\ve_1 \hbar}z_1,\dots,e^{\ve_r \hbar}z_r)$ for integers $\ve_j$.

\begin{lemma}
\label{lem.ff2}
For all invertible matrices $\Lambda(z)$ over $\BQ(z)$ and all integrable functions
$f_\hbar$, we have
\be
\label{eq.ff2}
\langle f_{\hbar}(x,z)\rangle_{x,\Lambda(z)} |_{z \mapsto e^{\ve \hbar}z}
\= \sqrt{\frac{\det\Lambda(e^{\ve\hbar}z)}{\det\Lambda(z)}} 
\Big\langle \exp\bigg(
\frac{x^{t}\big(\Lambda(z)-\Lambda(e^{\ve\hbar}z)\big)x}{2}\bigg)
f_{\hbar}(x,e^{\ve\hbar}z) \Big\rangle_{x,\Lambda(z)} \,.
\ee
\end{lemma}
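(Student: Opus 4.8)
The plan is to reduce the identity to a statement about the definition of formal Gaussian integration as a differential operator, and to exploit the fact that the two Gaussian brackets on the left- and right-hand sides use \emph{different} quadratic forms but the \emph{same} integration variable. Recall from~\eqref{FGI} that $\langle g \rangle_{x,\Lambda} = \exp\big(\tfrac12 \sum_{i,j}(\Lambda^{-1})_{ij}\partial_{x_i}\partial_{x_j}\big)\, g\big|_{x=0}$. The key elementary fact is the ``completion of the square'' (or Gaussian convolution) identity: for two invertible symmetric matrices $\Lambda_0$ and $\Lambda_1$ (over a field, or over $\BQ(z)$), one has
\bea
\label{plan.conv}
\langle g(x) \rangle_{x,\Lambda_1}
\= \sqrt{\frac{\det \Lambda_0}{\det \Lambda_1}\cdot\frac{\det(\Lambda_1-\Lambda_0+\Lambda_0)}{\det\Lambda_0}}^{\,-1}
\Big\langle \exp\Big(\tfrac12 x^t(\Lambda_0 - \Lambda_1)x\Big)\, g(x) \Big\rangle_{x,\Lambda_0},
\eea
where the prefactor simplifies to $\sqrt{\det\Lambda_1/\det\Lambda_0}^{\,-1}=\sqrt{\det\Lambda_0/\det\Lambda_1}$; I would state this cleanly as the lemma that for symmetric invertible $\Lambda_0$ and any symmetric $S$ with $\Lambda_0 - S$ invertible,
$\langle \exp(\tfrac12 x^t S x)\, g(x)\rangle_{x,\Lambda_0} = \sqrt{\det\Lambda_0/\det(\Lambda_0 - S)}\,\langle g(x)\rangle_{x,\Lambda_0 - S}$. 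This is exactly of the shape needed, with $\Lambda_0 = \Lambda(z)$, $S = \Lambda(z) - \Lambda(e^{\ve\hbar}z)$, so that $\Lambda_0 - S = \Lambda(e^{\ve\hbar}z)$.

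The steps I would carry out are: (1) First apply the substitution $z \mapsto e^{\ve\hbar}z$ directly to the left-hand side of~\eqref{eq.ff2}; since formal Gaussian integration with respect to $x$ commutes with substitution in the $z$-variables (it only differentiates in $x$ and then sets $x=0$, and $z$ is inert under this operation), the left-hand side equals $\langle f_\hbar(x, e^{\ve\hbar}z)\rangle_{x,\Lambda(e^{\ve\hbar}z)}$ on the nose. (2) Then prove the completion-of-the-square lemma above. For convergent Gaussians with positive-definite forms over $\BR$ this is the classical identity $\int e^{-\frac12 x^t\Lambda_1 x} h(x)\,dx / \int e^{-\frac12 x^t \Lambda_1 x}dx$ rewritten by inserting $e^{\frac12 x^t S x}$ into the $\Lambda_0$-Gaussian; since both sides of the claimed identity are rational functions (in the entries of $\Lambda_0$, $S$, and in $z$) applied to a polynomial, and they agree on the Zariski-dense set of real positive-definite data, the identity of formal power series / rational functions follows. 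Alternatively—and this is cleaner and avoids analytic detours—one proves it purely algebraically by checking that both sides, viewed as operators on $\BQ(z)[x]$, are computed by the generating-function identity for Gaussian moments; concretely, it suffices to verify~\eqref{plan.conv} on exponentials $g(x)=e^{J^t x}$ (a dense family in the relevant topology), where both sides reduce to the elementary Gaussian integral formula $\langle e^{J^t x}\rangle_{x,\Lambda}=\exp(\tfrac12 J^t\Lambda^{-1}J)$ together with the matrix identity $(\Lambda_0 - S)^{-1} = \Lambda_0^{-1} + \Lambda_0^{-1}S(\Lambda_0 - S)^{-1}$ and the determinant identity $\det(\Lambda_0 - S) = \det\Lambda_0 \det(I - \Lambda_0^{-1}S)$. (3) Finally, substitute $\Lambda_0 = \Lambda(z)$, $S = \Lambda(z) - \Lambda(e^{\ve\hbar}z)$, $g = f_\hbar(\cdot, e^{\ve\hbar}z)$ into the lemma and read off~\eqref{eq.ff2}.

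The one point that needs genuine care—rather than being an obstacle—is the meaning of the square root $\sqrt{\det\Lambda(e^{\ve\hbar}z)/\det\Lambda(z)}$ as a power series in $\hbar$: since $\Lambda(e^{\ve\hbar}z)\big|_{\hbar=0}=\Lambda(z)$, the ratio $\det\Lambda(e^{\ve\hbar}z)/\det\Lambda(z)$ lies in $1 + \hbar\,\BQ(z)\llbracket\hbar\rrbracket$, so it has a canonical square root in $\BQ(z)\llbracket\hbar\rrbracket$ equal to $1$ at $\hbar=0$, and this is the branch meant on both sides; I would note this explicitly so that the identity is unambiguous. Apart from this bookkeeping, the result is a direct consequence of the $z$-inertness of $\langle\,\cdot\,\rangle_{x,\Lambda}$ and the completion-of-the-square lemma, so I do not expect a serious obstacle; the proof is short. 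I would also remark that, just as in Lemma~\ref{lem.ff1}, one may alternatively cite Fubini/change-of-variables for formal Gaussian integration from~\cite{AarhusII} to obtain the completion-of-the-square step.
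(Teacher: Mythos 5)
Your proposal is correct and is essentially the paper's argument: the paper's one-line proof (``recentering the Gaussian after multiplying $z$ by $e^{\ve\hbar}$'') is exactly your combination of (i) the $z$-substitution commuting with the $x$-bracket and (ii) the completion-of-the-square identity absorbing $\exp\big(\tfrac12 x^t(\Lambda(z)-\Lambda(e^{\ve\hbar}z))x\big)$ at the cost of the determinant ratio, which is also precisely how the paper later defines the $\hbar$-dependent bracket in Equation~\eqref{FGIh}. Your remarks on the branch of the square root (the ratio lying in $1+\hbar\,\BQ(z)\llbracket\hbar\rrbracket$) and on $S=\Lambda(z)-\Lambda(e^{\ve\hbar}z)$ being $O(\hbar)$, so that the inserted exponential is an integrable function, are exactly the bookkeeping the paper leaves implicit.
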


\begin{proof}
The lemma follows from recentering the Gaussian after multiplying
$z$ by $e^{\ve\hbar}$.
\end{proof}

\subsection{\texorpdfstring{$q$--holonomic aspects of $\psi_\hbar$}{q-holonomic aspects of psi}}
\label{sub.qhol}
It is well-known that identities of holonomic functions can be proven 
algorithmically~\cite{WZ,PWZ}. We will use these ideas, adapted to our needs,
to prove fundamental identities between certain Gaussian integrals involving
the building block $\exp(\psi_{\hbar})$ of our series $\Phi^\Xi(\hbar)$.
Since $\psi_{\hbar}$ is related to the infinite Pochhammer symbol given in
Equation~\eqref{poch}, its functional equations will be of fundamental importance.
From its definition, it is clear that the infinite Pochhammer symbol
satisfies a simple first order linear $q$--difference equation
\be
\label{eq.poch}
(x;q)_{\infty} \= (1-x)(qx;q)_{\infty} \,.
\ee
To convert this into an identity of formal $\hbar$-series where $q=e^{\hbar}$,
we use the fact that there is an action of the quantum plane
(also known as
$q$--Weyl~\cite[Ex. 1.7]{Etingof} algebra)
on the space $\BQ(z)[x]\llbracket\hroot\rrbracket$
of integrable functions by an action on the $z$-variable given by
\be
\label{ML}
(L f_\hbar)(x,z) \= f_\hbar(x,e^{\hbar} z), \qquad
(M f_\hbar)(x,z) \= e^{\hbar} f_\hbar(x,e^{\hbar} z)
\ee
where $LM=qML$. The next lemma asserts that the completion
$\widehat{\psi}_{\hbar}(x,z)$ 
\be
\label{eq.psicomp}
\begin{aligned}
\widehat{\psi}_{\hbar}(x,z)
\;:&=\;
\exp\Big(\!
-\frac{\Li_{2}(z)}{\hbar}
-\frac{\Li_{1}(z)x}{\hbar^{\frac{1}{2}}}
+\frac{1}{2}\Li_{1}(z)
-\frac{\Li_{0}(z)x^2}{2}
\Big)
\psi_{\hbar}(x,z) \\
&=\;
\exp\Big(\!
-\sum_{k,\ell\in\BZ_{\geq0}}
\frac{B_{k}\,x^{\ell}\,\hbar^{k+\frac{\ell}{2}-1}}{\ell!\,k!}\Li_{2-k-\ell}(z)\Big)
\\
&\quad\in
\exp\Big(\!-\frac{\Li_{2}(z)}{\hbar}
-\frac{\Li_{1}(z)x}{\hbar^{\frac{1}{2}}}
+\frac{1}{2}\Li_{1}(z)
-\frac{\Li_{0}(z)x^2}{2}
\Big)(1+\hbar^{\frac{1}{2}}\BQ(z)[x]\llbracket\hbar^{\frac{1}{2}}
\rrbracket)
\end{aligned}
\ee
of $\psi_{\hbar}(x,z)$, as well as $\psi_{\hbar}(x,z)$ itself, satisfy a
corresponding first order linear $q$--difference equation, albeit with
complicated coefficients.

\begin{lemma}
\label{lem.psihdiff}
\rm{(a)} We have:
\be
\label{eq.qdiff.of.psih}
\widehat{\psi}_{\hbar}(x,e^{\hbar}z)
\=
(1-ze^{x\hbar^{1/2}})\,\widehat{\psi}_{\hbar}(x,z)\,.
\ee
\rm{(b)} We have:
\be
\label{eq.qdiff.of.psi}
\begin{aligned}
&\psi_{\hbar}(x,e^{\hbar}z)
\=
(1-ze^{x\hbar^{1/2}})
\,\psi_{\hbar}(x,z)
\sqrt{\frac{1-e^{\hbar}z}{1-z}}\\
&\times
\exp\Big(\frac{\Li_{2}(e^{\hbar}z)}{\hbar}-\frac{\Li_{2}(z)}{\hbar}
+\frac{\Li_{1}(e^{\hbar}z)x}{\hbar^{\frac{1}{2}}}
-\frac{\Li_{1}(z)x}{\hbar^{\frac{1}{2}}}
+\frac{1}{2}\Li_{0}(e^{\hbar}z)x^2
-\frac{1}{2}\Li_{0}(z)x^2\Big)\,.
\end{aligned}
\ee
\end{lemma}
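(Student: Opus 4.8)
The plan is to prove Lemma~\ref{lem.psihdiff} by first establishing part (a) directly from the defining series for $\widehat\psi_\hbar$, and then deriving part (b) as an immediate consequence by unwinding the exponential prefactor relating $\widehat\psi_\hbar$ and $\psi_\hbar$. For part (a), the key observation is that the second displayed formula in~\eqref{eq.psicomp} writes $\log\widehat\psi_\hbar(x,z) = -\sum_{k,\ell\ge 0}\frac{B_k x^\ell \hbar^{k+\ell/2-1}}{\ell!\,k!}\Li_{2-k-\ell}(z)$, so I would compute $\log\widehat\psi_\hbar(x,e^\hbar z) - \log\widehat\psi_\hbar(x,z)$ and show it equals $\log(1-ze^{x\hbar^{1/2}})$. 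The natural route is to use the polylogarithm identity $\Li_m(e^\hbar z) = \sum_{r\ge 0}\frac{\hbar^r}{r!}\Li_{m-r}(z)$ for $m\le 1$ (which follows from $(z\frac{d}{dz})\Li_m = \Li_{m-1}$ and $e^\hbar z \frac{d}{d(e^\hbar z)} = \frac{d}{d\hbar}$ acting on the $\hbar$-variable), together with the generating-function identity for Bernoulli numbers $\frac{t}{e^t-1} = \sum_{k\ge0}\frac{B_k}{k!}t^k$. Concretely, I expect the double sum defining $\log\widehat\psi_\hbar$ to telescope: the operator $z\mapsto e^\hbar z$ shifts the polylogarithm index, and the combination of Bernoulli numbers is precisely engineered so that the difference collapses to the single term $\sum_{n\ge 1}\frac{(ze^{x\hbar^{1/2}})^n}{n}\cdot(-1) = \log(1-ze^{x\hbar^{1/2}})$.

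An alternative, cleaner path for part (a) is to invoke the asymptotic expansion~\eqref{eq.qpoch.asy.psi} together with the functional equation~\eqref{eq.poch} of the infinite Pochhammer symbol. Writing $q = e^\hbar$ and applying~\eqref{eq.poch} to $x \mapsto z e^{x\hbar^{1/2}}$ gives $(ze^{x\hbar^{1/2}};q)_\infty = (1-ze^{x\hbar^{1/2}})(q z e^{x\hbar^{1/2}};q)_\infty = (1-ze^{x\hbar^{1/2}})(e^\hbar z\, e^{x\hbar^{1/2}};q)_\infty$; taking reciprocals and substituting the asymptotic expansion on both sides, the $\Li_2/\hbar$, $\Li_1 x/\hbar^{1/2}$, $\frac12\Li_1$, and $\Li_0 x^2/2$ prefactors are exactly what convert the raw $\psi_\hbar$ relation into the clean relation~\eqref{eq.qdiff.of.psih} for $\widehat\psi_\hbar$. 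However, since the paper explicitly wants algebraic proofs and treats these as formal power series identities, I would favor the direct series manipulation, using the asymptotic statement only as motivation; the formal identity among polylogarithms and Bernoulli numbers is what actually needs to be checked, and it holds coefficient-by-coefficient in $\BQ(z)[x]\llbracket\hbar^{1/2}\rrbracket$.

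For part (b), I would simply substitute. From~\eqref{eq.psicomp}, $\psi_\hbar(x,z) = \exp\big(\frac{\Li_2(z)}{\hbar} + \frac{\Li_1(z)x}{\hbar^{1/2}} - \frac12\Li_1(z) + \frac{\Li_0(z)x^2}{2}\big)\widehat\psi_\hbar(x,z)$. Applying this at $e^\hbar z$ and at $z$, dividing, and using part (a) to replace $\widehat\psi_\hbar(x,e^\hbar z)/\widehat\psi_\hbar(x,z)$ by $(1-ze^{x\hbar^{1/2}})$, I get $\psi_\hbar(x,e^\hbar z) = (1-ze^{x\hbar^{1/2}})\psi_\hbar(x,z)\exp(\Delta)$ where $\Delta$ collects the prefactor differences; the term $\exp(\frac12\Li_1(z) - \frac12\Li_1(e^\hbar z))$ is recognized as $\sqrt{(1-e^\hbar z)/(1-z)}$ since $\Li_1(w) = -\log(1-w)$, and the remaining terms are exactly those displayed in~\eqref{eq.qdiff.of.psi}. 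This step is purely bookkeeping once part (a) is in hand.

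The main obstacle is the verification underlying part (a): showing that the difference of the Bernoulli-weighted double sums of polylogarithms telescopes to $\log(1-ze^{x\hbar^{1/2}})$. One must handle the convergence/formal-power-series bookkeeping carefully, since the sum over $k,\ell$ with $k+\ell/2 > 1$ (in $\psi_\hbar$) versus all $k,\ell\ge 0$ (in $\widehat\psi_\hbar$) differ precisely by the finitely many ``low-order'' terms that produce the $\Li_2/\hbar$, $\Li_1 x/\hbar^{1/2}$, $\Li_1$, $\Li_0 x^2$ pieces — getting these boundary terms exactly right, including the factor-of-$\frac12$ discrepancy with~\cite{DG} noted after~\eqref{psih}, is where the care is needed. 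A clean way to organize this is to work entirely with $\widehat\psi_\hbar$ (where the sum is over \emph{all} $k,\ell\ge 0$ with no exclusion), prove~\eqref{eq.qdiff.of.psih} there using the index-shift identity for $\Li_m(e^\hbar z)$ and the Bernoulli generating function, and only translate back to $\psi_\hbar$ at the very end. I expect the core identity to reduce to the classical fact that $\sum_{k\ge 0}\frac{B_k}{k!}(\text{shift operator})^k$ inverts the forward-difference operator, applied to the function $m \mapsto \Li_m$, evaluated via its recursion $\Li_{m-1} = (z\partial_z)\Li_m$.
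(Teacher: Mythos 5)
Your proposal is correct and follows essentially the same route as the paper: part (a) is proved by expanding $\Li_{2-k-\ell}(e^{\hbar}z)=\sum_r\frac{\hbar^r}{r!}\Li_{2-k-\ell-r}(z)$ in the defining double sum for $\widehat\psi_\hbar$, resumming via the Bernoulli convolution $\sum_{k=0}^r\binom{r}{k}B_k=B_r(1)=B_r+\delta_{r,1}$ so that the $\delta_{r,1}$ term splits off as $-\Li_1(ze^{x\hbar^{1/2}})=\log(1-ze^{x\hbar^{1/2}})$, and part (b) is the bookkeeping you describe, unwinding the prefactor in~\eqref{eq.psicomp} and identifying $\exp(\tfrac12\Li_1(z)-\tfrac12\Li_1(e^{\hbar}z))$ with $\sqrt{(1-e^{\hbar}z)/(1-z)}$. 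Your choice to work entirely with $\widehat\psi_\hbar$ for part (a) and translate back only at the end is exactly the paper's organization.
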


Note that the identity~\eqref{eq.qdiff.of.psih} takes place in a larger ring,
which includes the symbols
$\exp(\frac{\Li_{2}(z)}{\hbar})$, $\exp(\frac{\Li_{1}(z)x}{\hroot})$,
$\exp(\frac{\Li_{0}(z)x^2}{2})$ and $\exp(\frac{\Li_{0}(z)x^2}{2})$ which one can
adjoin in the differential field $\BQ(z)((\hbar))$ as is standard in differential
Galois theory of linear differential equations~\cite{Singer:galois-differential}.
The symbols $\Li_k(z)$ for $k=0,1,2$ can be interpreted as normalized solutions
to the linear differential equations $z \frac{d}{dz} \Li_k(z) = \Li_{k-1}(z)$
with $\Li_0(z)=z/(1-z)$ and satisfy the usual properties
$\Li_k(e^\hbar z) = \sum_{r=0}^\infty \frac{1}{r!} \Li_{k-r}(z) \hbar^r$.
On the other hand, the coefficients in identity~\eqref{eq.qdiff.of.psi}
are elements of the ring $\BQ(z)[x]\llbracket \hroot \rrbracket$.

\begin{proof}
Part (a) can be proved directly using the facts that for
the Bernoulli polynomials $B_{r}(x)$ we have
$B_{r}(1)=B_{r}+\delta_{r,1}$ and
$B_{r}(x)=\sum_{k=0}^{r}\binom{r}{k}B_{n-k}x^{k}$.
Applying these identities we find that
\be
\begin{aligned}
&\exp\Big(-\sum_{k,\ell\in\BZ_{\geq0}}
\frac{B_{k}x^{\ell}\hbar^{k+\frac{\ell}{2}-1}}{\ell!k!}\Li_{2-k-\ell}(ze^{\hbar})\Big)
\\
&\=
\exp\Big(-\sum_{r,\ell\in\BZ_{\geq0}}\sum_{k=0}^{r}B_{k}\binom{r}{k}
\frac{x^{\ell}\hbar^{r+\frac{\ell}{2}-1}}{\ell!r!}\Li_{2-r-\ell}(z)\Big)\\
&\=
\exp\Big(-\sum_{r,\ell\in\BZ_{\geq0}}
\frac{B_{r}(1)x^{\ell}\hbar^{r+\frac{\ell}{2}-1}}{\ell!r!}\Li_{2-r-\ell}(z)\Big)\\
&\=
\exp\Big(-\Li_{1}(ze^{x\hbar^{1/2}})-\sum_{r,\ell\in\BZ_{\geq0}}
\frac{B_{r}x^{\ell}\hbar^{r+\frac{\ell}{2}-1}}{\ell!r!}\Li_{2-r-\ell}(z)\Big)\\
&\=
(1-ze^{x\hbar^{1/2}})\exp\Big(-\sum_{r,\ell\in\BZ_{\geq0}}
\frac{B_{r}x^{\ell}\hbar^{r+\frac{\ell}{2}-1}}{\ell!r!}\Li_{2-r-\ell}(z)\Big)\,.
\end{aligned}
\ee
Part (b) follows from (a), using Equation~\eqref{eq.psicomp} and expanding
in $\hbar$ to find that
\be
\begin{tiny}
\begin{aligned}
\sqrt{\frac{1-e^{\hbar}z}{1-z}}
\exp\Big(\frac{\Li_{2}(e^{\hbar}z)}{\hbar}-\frac{\Li_{2}(z)}{\hbar}
+\frac{\Li_{1}(e^{\hbar}z)x}{\hbar^{\frac{1}{2}}}
-\frac{\Li_{1}(z)x}{\hbar^{\frac{1}{2}}}
+\frac{1}{2}\Li_{0}(e^{\hbar}z)x^2
-\frac{1}{2}\Li_{0}(z)x^2\Big)
\in\BQ(z)[x]\llbracket\hbar^{\frac{1}{2}}\rrbracket\,.
\end{aligned}
\end{tiny}
\ee
\end{proof}

We end this section by discussing a useful relation between $\psi_\hbar(x,z)$
and its specialization at $x=0$. It is easy to see that the completion
$\widehat{\psi}_{\hbar}(x,z)$ of $\psi_\hbar(x,z)$ is regular at $x=0$ and satisfies
\be
\widehat{\psi}_{\hbar}(x,z)
\=
\widehat{\psi}_{\hbar}(0,ze^{x\hbar^{1/2}})\,.
\ee
This implies a corresponding statement
\bea
\label{eq.psid}
\psi_\hbar(x,z)
\= \psi_\hbar\big(0,ze^{x\hbar^{1/2}}\big)C_\hbar(x,z),
\eea
for $\psi_{\hbar}$ 
where
\bea
\label{Cdef}
C_\hbar(x,z) &\= \exp\Big(\!-\sum_{\ell\geq 3}
\frac{\hbar^{\frac \ell 2 -1}}{\ell!}\Li_{2-\ell}(z)x^\ell 
+\frac 1 2 \sum_{\ell\geq 1} \frac{h^{\frac \ell2}}{\ell!}\Li_{1-\ell}(z)x^\ell\Big)\\
&\=\exp\Big(\!-\frac{\Li_2(ze^{x\hroot})}\hbar+\frac{\Li_2(z)}\hbar
-\frac{\log(1-z)}\hroot x + \frac{z}{1-z}\frac {x^2}2\\
&\qquad\qquad-\frac 1 2 (\log(1-z\ezh) - \log(1-z))\Big)\,.
\eea

\subsection{\texorpdfstring{Fourier transform of $\psi_\hbar$}{Fourier transform of psi}}
\label{sub.fourier}

In this section, we prove two functional identities for $\psi_{\hbar}(\bullet,z)$,
$\psi_{\hbar}(\bullet,z')$ and $\psi_{\hbar}(\bullet,z'')$
where $z':=1/(1-z)$ and $z'':=1-1/z$ are related to the $\BZ/3\BZ$-symmetry of
the shapes of a tetrahedron, and reflect the $\BZ/3\BZ$-symmetry of the dilogarithm
function.

The next theorem is a formal Gaussian integration version of
the Fourier transform of the Faddeev quantum dilogarithm given in
Equation~\eqref{phifourier}. The proof, however, does not follow from the
distributional identity~\eqref{phifourier}, and instead uses $q$--holonomic ideas
and the basics of formal Gaussian integration. 

This theorem and its Corollary~\ref{cor.fourier} are used in Section~\ref{sec.quad}
to show that the power series $\Phi^\Xi(\hbar)$ are independent of the choice of
a nondegenerate quad.

\begin{theorem}
\label{thm.fourier}
We have:
\bea
\label{eq.fourier}
\psi_\hbar (x,z)
\=e^{-\frac{\hbar}{24}}\Big\langle
\exp\Big(
\Big(y+\frac{xz}{1-z}\Big) \frac{\hbar^{\frac 1 2}}{2}\Big)\,
\psi_\hbar\Big(y+\frac{xz}{1-z}, \frac 1 {1-z}
\Big)
\Big\rangle_{y,1-z^{-1}} 
\eea
in the ring $\BQ(z)[x]\llbracket\hroot\rrbracket$.
\end{theorem}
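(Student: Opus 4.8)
The plan is to reduce the claimed Fourier identity to a first-order $q$-difference equation in the shape variable $z$, matched at a single initial point, by exploiting that both sides of \eqref{eq.fourier} are determined by such an equation together with their constant term in $\hbar^{\frac12}$. Write $\mathrm{LHS}(x,z)$ and $\mathrm{RHS}(x,z)$ for the two sides of \eqref{eq.fourier}, viewed as elements of $\BQ(z)[x]\llbracket\hroot\rrbracket$. First I would establish that $\psi_\hbar(x,z)$ satisfies a $q$-difference equation obtained from Lemma~\ref{lem.psihdiff}(b), rewritten purely inside $\BQ(z)[x]\llbracket\hroot\rrbracket$: applying $z\mapsto e^\hbar z$ to $\psi_\hbar(x,z)$ multiplies it by $(1-ze^{x\hroot})$ times an explicit invertible prefactor built from the polylog symbols. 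The key point is that after the change of variables $y\mapsto y+\frac{xz}{1-z}$ has been absorbed, the right-hand side involves $\psi_\hbar$ evaluated at the shape $z'=1/(1-z)$, and under $z\mapsto e^\hbar z$ the shape $z'$ transforms rationally; one computes that $1/(1-e^\hbar z)$ differs from $e^{(\cdot)\hbar}\,z'$ by a shift that can be tracked. So the strategy is: show both $\mathrm{LHS}$ and $\mathrm{RHS}$ satisfy the \emph{same} first-order linear $q$-difference equation in $z$ with coefficients in $\BQ(z)[x]\llbracket\hroot\rrbracket$, and agree modulo $\hroot$ (where both sides reduce to $1$, since $\psi_\hbar\in 1+\hroot(\cdots)$ and the Gaussian of $1$ is $1$ while $e^{-\hbar/24}\equiv 1$); then conclude $\mathrm{LHS}=\mathrm{RHS}$ by induction on the $\hbar^{\frac12}$-adic order.

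The machinery for the right-hand side is Lemmas~\ref{lem.ff1} and~\ref{lem.ff2}. To apply $z\mapsto e^\hbar z$ to the Gaussian integral over $y$ with quadratic form $1-z^{-1}=1/z'$, I use Lemma~\ref{lem.ff2}: this produces a determinant ratio $\sqrt{\Lambda(e^\hbar z)/\Lambda(z)}$ (here $\Lambda$ is the scalar $1-z^{-1}$) and an extra Gaussian factor $\exp(\tfrac12 y^2(\Lambda(z)-\Lambda(e^\hbar z)))$, after which the integrand becomes $\psi_\hbar$ evaluated at the shifted shape. I then feed in Lemma~\ref{lem.psihdiff}(b) to peel off the factor $(1-z'e^{(\cdots)\hroot})$ and the polylog prefactors, and use Lemma~\ref{lem.ff1}(b) (the affine-shift/completing-the-square identity) to re-absorb the linear-in-$y$ terms generated by all of this back into the standard Gaussian bracket. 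The bookkeeping here is the bulk of the work: one must check that the accumulated prefactors — the $e^{-\hbar/24}$, the determinant ratio, the polylog exponentials in $z$ and in $z'$, and the quadratic completion — combine to exactly the prefactor predicted by the $q$-difference equation for $\mathrm{LHS}$. I expect the $\BZ/3\BZ$-symmetry of the dilogarithm (the five-term / inversion relations among $\Li_2(z)$, $\Li_2(z')$, $\Li_2(z'')$, e.g. $\Li_2(z)+\Li_2(1-z)=\tfrac{\pi^2}{6}-\log z\log(1-z)$ and its companions for $z'$) to be precisely what makes the constant ($\hbar^{-1}$ and $\hroot^{-1}$) terms cancel; the constant $\tfrac{1}{24}$ should emerge from the $\zeta(2)=\tfrac{\pi^2}{6}$ contributions together with the $\sqrt{1-z}$ normalizations hidden in $\widehat\psi$ versus $\psi$.

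The main obstacle will be controlling the non-polynomial "symbols" $\exp(\Li_2(z)/\hbar)$, $\exp(\Li_1(z)x/\hroot)$, $\exp(\Li_0(z)x^2/2)$ that appear when one passes through $\widehat\psi$, and making sure the final identity actually lands in $\BQ(z)[x]\llbracket\hroot\rrbracket$ rather than in the larger differential ring of \eqref{eq.psicomp}. The clean way is to never introduce $\widehat\psi$ into the Gaussian at all: work directly with Lemma~\ref{lem.psihdiff}(b), whose coefficients are genuinely in $\BQ(z)[x]\llbracket\hroot\rrbracket$, so that the $q$-difference equation satisfied by $\mathrm{RHS}$ has coefficients in that ring from the start, and the matching-modulo-$\hroot$ step is then unambiguous. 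A secondary subtlety is that the change of variables $y\mapsto y+\frac{xz}{1-z}$ carries an $x$-dependence, so when I differentiate the $q$-difference relation I must treat $x$ as a passive parameter throughout and verify that Lemma~\ref{lem.ff2}, stated for $z$-shifts, is being applied with $x$ frozen — which it is, since $x$ is not among the Gaussian variables. Once the two $q$-difference equations are seen to coincide and the $\hroot^0$-terms agree, uniqueness of solutions to a first-order linear recursion over the $\hroot$-adically complete ring $\BQ(z)[x]\llbracket\hroot\rrbracket$ finishes the proof.
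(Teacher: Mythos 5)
Your overall architecture --- show both sides satisfy the same first-order linear $q$-difference equation in $z$ via Lemmas~\ref{lem.psihdiff}, \ref{lem.ff1} and~\ref{lem.ff2}, then invoke uniqueness from an initial condition --- is the paper's strategy. But the uniqueness step as you state it is broken, and this is a genuine gap, not a bookkeeping issue. You propose to match the two sides ``modulo $\hroot$'' (both reduce to $1$) and then conclude equality ``by induction on the $\hbar^{\frac12}$-adic order.'' Since $q=e^{\hbar}\equiv 1 \pmod{\hbar}$, the shift $z\mapsto e^{\hbar}z$ acts as the identity at leading order: writing the common equation as $F(e^{\hbar}z)=c(z,\hbar)F(z)$ with $c\equiv 1\pmod{\hroot}$ (which is forced, and is what Lemma~\ref{lem.psihdiff}(b) gives after the prefactors are expanded), the coefficient of $\hbar^{n/2}$ in the equation reads $f_n+(\text{derivatives of }f_{n-2},f_{n-4},\dots)=f_n+\sum_{k\ge1}c_kf_{n-k}$, so $f_n$ cancels and the equation imposes \emph{no} constraint on the top coefficient. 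Concretely, if $H=\mathrm{LHS}/\mathrm{RHS}$, the equation only forces $z\partial_zH=0$, i.e.\ $H\in 1+\hroot\,\BQ[x]\llbracket\hroot\rrbracket$ is an arbitrary $q$-constant; agreement modulo $\hroot$ does not make it $1$. Your induction therefore cannot close.

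What is actually needed --- and what the paper supplies --- is an initial condition at a special value of $z$ known to \emph{all} orders in $\hbar$, not at a special order of $\hbar$. The paper observes that the coefficients of both sides are rational in $z$ of nonpositive degree, embeds everything in $\BQ\llbracket z^{-1}\rrbracket\llbracket\hbar\rrbracket$, uses the Newton polygon of the $q$-difference equation to get uniqueness from the value at $z=\infty$, and then checks that both sides equal $e^{\hbar/12}$ there. Relatedly, the paper does not run the argument for general $x$ directly: it first proves the $x=0$ case~\eqref{eq.fourier0} (where the $z=\infty$ evaluation is clean) and then deduces the general case from the identity $\psi_\hbar(x,z)=\psi_\hbar(0,ze^{x\hroot})C_\hbar(x,z)$ of~\eqref{eq.psid} together with an affine change of variables (Appendix~\ref{app.fourier}); your plan of carrying $x$ as a passive parameter throughout would force you to justify the all-orders evaluation at $z=\infty$ uniformly in $x$, which is exactly the point the reduction to $x=0$ is designed to avoid. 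To repair your proof, replace ``agree modulo $\hroot$'' by ``agree at $z=\infty$ to all orders in $\hbar$'' and add the argument that the equation, viewed over $\BQ\llbracket z^{-1}\rrbracket\llbracket\hbar\rrbracket$, propagates this initial condition.
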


In fact, both sides of of Equation~\eqref{eq.fourier} lie in the subring
$\BQ[z^{\pm},(1-z)^{-1}][x]\llbracket\hroot\rrbracket$. 

\begin{proof}
It it clear from the definition that both sides of Equation~\eqref{eq.fourier}
are elements of the ring $\BQ(x)[z]\llbracket\hbar\rrbracket$. 
First, we will prove the specialization of Equation~\eqref{eq.fourier}
when $x=0$, i.e., we will show that
\bea
\label{eq.fourier0}
\psi_\hbar(0,z)
\= e^{-\frac{\hbar}{24}}
\Big\langle
\exp\Big(
\frac y2\hroot\Big)\,
\psi_\hbar\Big(y,\frac 1{1-z}\Big)
\Big\rangle_{y,1-z^{-1}} \in \BQ(z)[[\hbar]] \,.
\eea
To prove this, we will combine $q$-holonomic ideas with formal Gaussian
integration. From Equation~\eqref{eq.qdiff.of.psih}, we see that the left hand
side of Equation~\eqref{eq.fourier0} multiplied by
$\frac{\exp(-\Li_{2}(z)\hbar^{-1})}{\sqrt{1-z}}$ satisfies a simple first order
$q$--difference equation. We want to show the same for the right hand side.
To do this, consider the function
\be
\begin{aligned}
I_{m,\hbar}(w,z)
\=
w^me^{-\frac{\hbar}{24}+\frac{\pi^2}{6\hbar}}
\exp\Big(
\log(w)\Big(
\!-\frac{\log(w)}{2\hbar}
+\frac{\pi i}{\hbar}
-\frac{\log(z)}{\hbar}
+\frac{1}{2}\Big)\Big)\,
\widehat{\psi}_\hbar(0,w)
\end{aligned}
\ee
which is an element of a larger ring discussed in relation to
$\widehat{\psi}_{\hbar}$ of Equation~\eqref{eq.psicomp}. Using
Equation~\eqref{eq.psid} and the symmetry
\be
    \Li_2(z)
    \=
    \Li_2\Big(\frac 1 {1-z}\Big)
    -\frac 1 2\log({1-z})^2 +\pi i \log({1-z})-\log(z)\log({1-z})
    -\frac {\pi^2}6
\ee
of the dilogarithm~\cite{Zagier:dilog}, it is easy to see that
the right hand side of Equation~\eqref{eq.fourier0} is given by
\be
\label{eq.fourier.I.to.bracket}
\Big\langle
\sqrt{-z}
\exp\Big(\frac{\Li_{2}(z)}{\hbar}
+(1-z^{-1})\frac{w^2}{2}\Big)
I_{0,\hbar}\Big(\frac{1}{1-z}e^{w\hbar^{1/2}},z\Big)
\Big\rangle_{w,1-z^{-1}}\,.
\ee
The function $I_{m,\hbar}$ satisfies the linear $q$--difference equations
\be
\begin{aligned}
I_{m,\hbar}(e^{\hbar}w,z)
&\=
-e^{m\hbar}w^{-1}z^{-1}(1-w)I_{m,\hbar}(w,z)\,,\\
I_{m,\hbar}(w,e^{\hbar}z)
&\=
w^{-1}I_{m,\hbar}(w,z)\,,\\
I_{m,\hbar}(w,z)
&\=
wI_{m,\hbar}(w,z)\,,
\end{aligned}
\ee
which imply that
\be
\label{eq.If}
(1-e^{-m\hbar}z)I_{m,\hbar}\Big(\frac{1}{1-z}e^{w\hbar^{1/2}+\hbar},z\Big)
\=
I_{m}\Big(\frac{1}{1-z}e^{w\hbar^{1/2}},e^{\hbar}z\Big).
\ee
If we multiply both sides of~\eqref{eq.If} with the factors from
Equation~\eqref{eq.fourier.I.to.bracket} and take the bracket of both sides
and apply Lemma~\ref{lem.ff1} and Lemma~\ref{lem.ff2} to change
coordinates and the Gaussian, we find that when $m=0$ both sides of
Equation~\eqref{eq.fourier0} satisfy the same $q$--difference
equation~\eqref{eq.qdiff.of.psih}.
Moreover, it is easy to see that both sides of
Equation~\eqref{eq.fourier.I.to.bracket} are power series in $\hbar$ with
coefficients rational functions of $z$ of nonpositive degree, thus we can work
with the ring $\BQ\llbracket z^{-1}\rrbracket\llbracket \hbar\rrbracket$
instead. In this case, the Newton polygon of this first order linear
$q$-difference equation implies that it has a unique solution in
$\BQ\llbracket z^{-1}\rrbracket\llbracket \hbar\rrbracket$ determined by its
value at $z=\infty$. Therefore, the identity in Equation~\eqref{eq.fourier0}
follows from its specialization at $z=\infty$. Since
\be
\begin{aligned}
\psi_\hbar(0,\infty)
&\=
\exp\Big(\frac{\hbar}{12}\Big)\,,\\
e^{\frac{-\hbar}{24}}\left\langle
\exp\Big(
\frac w2\hroot\Big)
\psi_\hbar\left(w,0\right)
\right\rangle_{w,1}
&\=
e^{\frac{-\hbar}{24}}\left\langle
\exp\left(
\frac w2\hroot
\right)
\right\rangle_{w,1}
\=
\exp\Big(\frac{\hbar}{12}\Big)\,,
\end{aligned}
\ee
this completes the proof of Equation~\eqref{eq.fourier0}.

\medskip

Going back to the general case of $z$, Equation~\eqref{eq.fourier} follows
from Equation~\eqref{eq.psid}
together with~\eqref{eq.fourier0} using Lemma~\ref{lem.ff2}
to shift the Gaussian and Lemma~\ref{lem.ff1}
to change integration variable via the transformation
\be
w \mapsto w-\frac 1 \hroot \log\left(\frac{1-z}{1-ze^{x\hbar^{1/2}}}\right)
-\frac{x}{1-z^{-1}}
\,.
\ee
The detailed calculation is given in Appendix~\ref{app.fourier}. 
\end{proof}

Theorem~\ref{thm.fourier} implies the following identity relating
$\psi_\hbar(\bullet,z)$ and $\psi_\hbar(\bullet,z'')$.

\begin{corollary}
\label{cor.fourier}
We have:
\bea
\label{eq.fourier2}
\psi_\hbar (x,z)
\=e^{\frac{\hbar}{24}}\Big\langle
\exp\Big(\!
-\frac{x}{2}\hbar^{\frac{1}{2}}\Big)\,
\psi_\hbar\Big(y-\frac{x}{1-z}, 1-z^{-1}\Big)
\Big\rangle_{y,z-1} 
\eea
in the ring $\BQ(z)[x]\llbracket\hroot\rrbracket$.
\end{corollary}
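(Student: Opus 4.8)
The plan is to deduce \eqref{eq.fourier2} from Theorem~\ref{thm.fourier} by applying that identity twice, moving once around the order-three cyclic symmetry $z\mapsto z'\mapsto z''\mapsto z$ of the tetrahedron shapes; here $z'=1/(1-z)$, $z''=1-1/z$, and one checks from \eqref{3z} that $(z')'=z''$ and $(z'')'=z$. First I would use \eqref{eq.fourier} to write $\psi_\hbar(x,z)$ as the formal Gaussian integral over an auxiliary variable $y$, with quadratic form $1-z^{-1}$, of the function $\exp\big((y+\tfrac{xz}{1-z})\tfrac{\hbar^{1/2}}{2}\big)\,\psi_\hbar(y+\tfrac{xz}{1-z},z')$. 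Next I would apply \eqref{eq.fourier} a second time, with $z$ replaced by $z'$ and with the external variable taken to be $u:=y+\tfrac{xz}{1-z}$; since $\tfrac{z'}{1-z'}=-z^{-1}$, $\tfrac1{1-z'}=z''$ and $1-(z')^{-1}=z$, this rewrites $\psi_\hbar(u,z')$ as the formal Gaussian integral over a second auxiliary variable $v$, with form $z$, of $\exp\big((v-\tfrac uz)\tfrac{\hbar^{1/2}}{2}\big)\,\psi_\hbar(v-\tfrac uz,z'')$. Because $\Lambda^{-1}$ is block-diagonal whenever $\Lambda$ is, substituting one formal Gaussian integral into the other (which is immediate from the definition \eqref{FGI}) turns the composite into a single formal Gaussian integral over $(y,v)$ against $\diag(1-z^{-1},z)$, with integrand $\exp\big((u+v-\tfrac uz)\tfrac{\hbar^{1/2}}{2}\big)\,\psi_\hbar(v-\tfrac uz,z'')$ and overall scalar $e^{-\hbar/12}$.

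The next step is to collapse this two-dimensional Gaussian to a one-dimensional one. The single point that needs care is which linear combination of $y$ and $v$ to single out: I would use the \emph{linear} change of integration variables $(y,v)\mapsto(y,w)$ with $w:=v-\tfrac yz$, which is covered by Lemma~\ref{lem.ff1}(a). This is deliberately not the substitution $v-\tfrac uz$, whose $x$-dependent part must be kept inside the argument of $\psi_\hbar$ (it cannot be produced by the $\hbar^{1/2}$-scaled affine shift of Lemma~\ref{lem.ff1}(b)); indeed after the substitution one has $v-\tfrac uz=w-\tfrac{x}{1-z}$, exactly the shift appearing in \eqref{eq.fourier2}. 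Using the shape relations $z''+z^{-1}=1$ and $zz''=z-1$ one checks that $\diag(1-z^{-1},z)$ becomes $\left(\begin{smallmatrix}1&1\\1&z\end{smallmatrix}\right)$ in the variables $(y,w)$, and that the linear exponent simplifies to $(y+w-x)\tfrac{\hbar^{1/2}}{2}$. This exponent equals $\exp\big(\tfrac12 y\hbar^{1/2}\big)$ times a function of $w$ and $x$ only, so Fubini (Lemma~\ref{lem.ff1}(c) with $A=B=1$, $C=z$) integrates out $y$: it produces the factor $e^{\hbar/8}$, replaces the form by the Schur complement $z-1$, and turns the remaining exponent into $\exp\big(-\tfrac12 w\hbar^{1/2}\big)\exp\big(\tfrac12(w-x)\hbar^{1/2}\big)=\exp\big(-\tfrac x2\hbar^{1/2}\big)$.

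Collecting the scalars, $e^{-\hbar/12}\cdot e^{\hbar/8}=e^{\hbar/24}$, and renaming $w$ back to $y$, the resulting expression is precisely $e^{\hbar/24}\big\langle\exp(-\tfrac x2\hbar^{1/2})\,\psi_\hbar(y-\tfrac{x}{1-z},1-z^{-1})\big\rangle_{y,z-1}$, which is \eqref{eq.fourier2}. All intermediate objects lie in $\BQ(z)[x]\llbracket\hroot\rrbracket$, and the quadratic forms $1-z^{-1}$, $z$, $\left(\begin{smallmatrix}1&1\\1&z\end{smallmatrix}\right)$, $z-1$ (and the block $A=1$) are invertible over $\BQ(z)$, so every formal Gaussian integral occurring is defined. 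Note that the argument uses only Theorem~\ref{thm.fourier} together with parts (a) and (c) of Lemma~\ref{lem.ff1}: neither Lemma~\ref{lem.ff2} nor the $\BZ/3\BZ$-symmetry of the dilogarithm is needed, since \eqref{eq.fourier} already carries no auxiliary dilogarithm prefactors. The only real obstacle is the bookkeeping --- choosing the intermediate linear substitution so that the tetrahedron-shape identities conspire to decouple the Gaussian and reproduce exactly the shift $y-\tfrac{x}{1-z}$ and the form $z-1$ of \eqref{eq.fourier2}.
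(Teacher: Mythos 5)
Your proposal is correct and follows exactly the route of the paper's own (very terse) proof: apply Equation~\eqref{eq.fourier} a second time to the $\psi_\hbar(\,\cdot\,,\tfrac1{1-z})$ on its right-hand side, make a linear change of integration variables, and use Fubini (Lemma~\ref{lem.ff1}) to integrate out the variable absent from the remaining $\psi_\hbar$. Your worked-out details check out as well: the form $\bigl(\begin{smallmatrix}1&1\\1&z\end{smallmatrix}\bigr)$, the Schur complement $z-1$, the exponent collapsing to $-\tfrac{x}{2}\hbar^{1/2}$, and the scalar $e^{-\hbar/12}e^{\hbar/8}=e^{\hbar/24}$ all agree with \eqref{eq.fourier2}.
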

\begin{proof}
Apply Equation~\eqref{eq.fourier} to the $\psi_{\hbar}$
that appears on the right hand side of the same Equation~\eqref{eq.fourier}.
Then apply a change of variables and Fubini's
theorem of Lemma~\ref{lem.ff1} to calculate
the bracket for the variable that doesn't appear
in the argument of the remaining $\psi_{\hbar}$.
\end{proof}

\subsection{\texorpdfstring{Pentagon identity for $\psi_\hbar$}{Pentagon identity for psi}}
\label{sub.pentagon}

In this section, we prove a pentagon identity for the functions
$\psi_\hbar(\bullet,z)$ where $z$ takes the five values
\be
\label{5z}
[z_1]+[z_2]\mapsto
[z_1 z_0^{-1}] +[z_0] +[z_2 z_0^{-1}], 
\qquad z_0\=z_1+z_2-z_1 z_2
\ee
what appear in the 5-term relation for the dilogarithm.
The next theorem is a formal Gaussian integration version of the pentagon
identity~\eqref{phipentagon} of the Faddeev quantum dilogarithm and will be
used in Section~\ref{sec.pachner} to prove that the series $\Phi^\Xi(\hbar)$
is independent of 2--3 Pachner moves. Let us denote
\be
\label{eq.delta23}
\delta
\=2+\Li_{0}(z_1z_{0}^{-1})+\Li_{0}(z_{0})+\Li_{0}(z_2z_{0}^{-1})
\=
\frac{(z_1+z_2-z_1z_2)^2}{z_1z_2(1-z_1)(1-z_2)}\,.
\ee

\begin{theorem}
\label{thm.pentagon}
We have: 
\be
\label{eq.pentagon}
\begin{aligned}
&\psi_\hbar(x,z_1)\,\psi_\hbar(y,z_2)
\=e^{- \frac \hbar {24}}
\Big\langle
\psi_\hbar\Big(-w -y
+\frac{xz_2+yz_1}{z_0},
z_1z_0^{-1}\Big)\\
&
\psi_\hbar\Big(w+x+y
-\frac{xz_2+yz_1}{z_0},
z_0\Big)
\psi_\hbar\Big(-w-x
+\frac{xz_2+yz_1}{z_0},
z_2z_0^{-1}\Big)
\Big\rangle_{w,\delta}
\end{aligned}
\ee
in the ring $\BQ(z_1,z_2)[x,y]\llbracket\hroot\rrbracket$. 
\end{theorem}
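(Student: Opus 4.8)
The strategy mirrors the proof of Theorem~\ref{thm.fourier}: combine the $q$-holonomic first-order difference equations satisfied by $\psi_\hbar$ (Lemma~\ref{lem.psihdiff}) with the transformation rules for formal Gaussian integration (Lemmas~\ref{lem.ff1} and~\ref{lem.ff2}), and then pin down the identity by checking it on a suitable degeneration. First I would reduce to the case $x=y=0$ using Equation~\eqref{eq.psid}: writing each $\psi_\hbar(x,z_1)$ and $\psi_\hbar(y,z_2)$ via $\psi_\hbar(0,z_1 e^{x\hroot})C_\hbar(x,z_1)$ etc., and correspondingly rewriting the three factors on the right, the dependence on $x$ and $y$ should be absorbed into a shift $z_1\mapsto z_1 e^{x\hroot}$, $z_2\mapsto z_2 e^{y\hroot}$ together with a change of the integration variable $w$ and a recentering of the Gaussian $\langle\cdot\rangle_{w,\delta}$; here I would use Lemma~\ref{lem.ff1}(b) for the affine shift and Lemma~\ref{lem.ff2} for the $\hbar$-dependent rescaling of $z_1,z_2$, exactly as in the last paragraph of the proof of Theorem~\ref{thm.fourier}. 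The bookkeeping of the quadratic and linear terms in $\hbar$ (the prefactor $e^{-\hbar/24}$, the $C_\hbar$ factors, and the Gaussian normalization) will be routine but lengthy, and I would relegate it to an appendix analogous to Appendix~\ref{app.fourier}.

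Having reduced to $x=y=0$, the core of the argument is to show that both sides of the resulting identity, as functions of $(z_1,z_2)$, satisfy the \emph{same} pair of first-order linear $q$-difference equations — one in $z_1\mapsto e^\hbar z_1$ and one in $z_2\mapsto e^\hbar z_2$. For the left side this is immediate from Equation~\eqref{eq.qdiff.of.psih}, since $\widehat\psi_\hbar(0,e^\hbar z)=(1-z)\widehat\psi_\hbar(0,z)$ and $\psi_\hbar(0,z)$ differs from $\widehat\psi_\hbar(0,z)$ only by the explicit dilogarithm factor. For the right side I would introduce auxiliary functions $I_{m,\hbar}$ playing the same role as in the Fourier proof: a completed version of each of the three $\psi_\hbar$ factors, dressed with explicit exponentials of $\Li_2,\Li_1,\Li_0$ and of logarithms, chosen so that the $q$-difference equations in the \emph{internal} variable $w$ and in $z_1,z_2$ take a clean monomial form. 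The key input making the three shifts compatible is the five-term relation for the dilogarithm together with its companion identities for $\Li_1$ and $\Li_0$ under the substitution~\eqref{5z}; these guarantee that the sum of the three $\Li_2$-terms (divided by $\hbar$), the three $\Li_1$-terms (divided by $\hroot$), and the three $\Li_0$-terms assemble into exactly the left-hand data plus the Gaussian weight $\delta$ of~\eqref{eq.delta23}. Pushing the $z_i\mapsto e^\hbar z_i$ shift through the bracket using Lemmas~\ref{lem.ff1} and~\ref{lem.ff2}, and using that the argument shifts $w+x+y-(xz_2+yz_1)/z_0$ etc.\ transform affinely, one then checks that the right side obeys the same difference equations as the left.

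Finally, I would argue uniqueness of solutions: both sides, after dividing by the common leading exponential factor $\exp(-(\Li_2(z_1)+\Li_2(z_2))/\hbar - \dots)$, are power series in $\hbar$ whose coefficients are rational functions of $(z_1,z_2)$ of a controlled (nonpositive) degree, so one can pass to a ring like $\BQ\llbracket z_1^{-1},z_2^{-1}\rrbracket\llbracket\hbar\rrbracket$, where the Newton-polygon argument shows the first-order system has a unique solution determined by its value at $z_1=z_2=\infty$. The identity then follows from checking that limiting value: as $z_1,z_2\to\infty$ one has $\psi_\hbar(0,z_i)\to e^{\hbar/12}$ and $z_0=z_1+z_2-z_1z_2\to\infty$ with $z_1z_0^{-1},z_2z_0^{-1}\to 0$, so the three right-hand factors $\psi_\hbar$ degenerate (two of them to $1$ at argument-limit with shape $\to0$, one to $e^{\hbar/12}$), $\delta\to 1$, and the Gaussian collapses to a computable elementary integral, matching $e^{\hbar/12}\cdot e^{\hbar/12}=e^{\hbar/6}$ on the left. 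The main obstacle I anticipate is \emph{the internal-variable analysis}: unlike the Fourier case there are three $\psi_\hbar$ factors sharing a single integration variable $w$, so one must verify that the first-order $q$-difference equation in $w$ (needed to legitimately shift $w\mapsto e^\hbar w$ inside the bracket) closes up consistently — this is where the algebraic form of the five-term relation, rather than just its value, is essential, and getting the auxiliary functions $I_{m,\hbar}$ and the accompanying dilogarithm identities exactly right is the delicate point; the rest is careful but mechanical Gaussian bookkeeping, deferred to an appendix.
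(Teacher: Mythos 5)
Your plan is correct and follows essentially the same route as the paper: reduce the general $(x,y)$ case to $x=y=0$ via Equation~\eqref{eq.psid} together with a recentering of the Gaussian and an affine change of the integration variable (the paper defers this bookkeeping to Appendix~\ref{app.pentagon}), establish a $q$-difference system for an auxiliary dressed product $I_{m,\hbar}$ using the five-term relation, prove uniqueness by a Newton-polygon argument in $\BQ\llbracket z_1^{-1},z_2^{-1}\rrbracket\llbracket\hbar\rrbracket$, and evaluate at $z_1=z_2=\infty$. The only point where the execution differs from your sketch is that the equations one actually obtains for both sides, after eliminating the auxiliary index $m$, are second order in each of $z_1$ and $z_2$ rather than first order, but the structure of the leading monomials still forces a unique power-series solution determined by the single value at infinity.
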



\begin{proof}
It is clear from the definition that both sides of Equation~\eqref{eq.pentagon}
are elements of the ring $\BQ(z_1,z_2)[x,y]\llbracket\hroot\rrbracket$. To prove this,
we follow the same approach we used to prove Theorem~\ref{thm.fourier}. First, we
prove the identity~\eqref{eq.pentagon} when $u=v=0$, i.e., we will show that
\be
\label{pentagon0}
\psi_\hbar(0,z_1)\,\psi_\hbar(0,z_2) 
\=e^{- \frac \hbar {24}}
\big\langle\psi_\hbar(-w,z_1z_0^{-1})
\,\psi_\hbar(w,z_0)
\,\psi_\hbar(-w,z_2z_0^{-1})
\big\rangle_{w,\delta}
\ee
in the ring $\BQ(z_1,z_2)\llbracket\hbar\rrbracket$
and to prove this we will again use $q$--holonomic methods.
To do so, consider the function
\be
\label{eq.Ip}
\begin{aligned}
&I_{m,\hbar}(z_1,z_2,z)
\=
e^{\frac{\pi^{2}}{6\hbar}-\frac{\hbar}{24}}z^{m}\,
\widehat{\psi}_{\hbar}(0,z_1z^{-1})\,
\widehat{\psi}_{\hbar}(0,z)\,
\widehat{\psi}_{\hbar}(0,z_2z^{-1})\\
&\times\exp\Big(-\frac{\log(z_1)\log(z_2)}{\hbar}
+\frac{\log(z)\log(z_1)}{\hbar}
+\frac{\log(z)\log(z_2)}{\hbar}
-\frac{\log(z)^2}{\hbar}\Big)\,,
\end{aligned}
\ee
which is again an element of a larger ring discussed in relation to
$\widehat{\psi}_{\hbar}$ of Equation~\eqref{eq.psicomp}.
Using Equation~\eqref{eq.psid} and the five term relation for the
dilogarithm~\cite{Zagier:dilog}, it is easy to see that the right hand side of
Equation~\eqref{pentagon0} is given by
\be
\label{eq.Ip1}
\Big\langle\frac{\sqrt{z_1z_2}(1-z_1)(1-z_2)}{z_1+z_2-z_1z_2}
\exp\Big(\frac{\Li_{2}(z_1)}{\hbar}
+\frac{\Li_{2}(z_2)}{\hbar}
+\frac{\delta}{2}z^2\Big)
I_{0,\hbar}(z_1,z_2,z_0e^{w\hbar^{1/2}})
\Big\rangle_{w,\delta}\,.
\ee
The function $I_{m,\hbar}$ satisfies the system of linear $q$--difference equations
\be\label{eq.Ip2}
\begin{aligned}
I_{m,\hbar}(e^{\hbar}z_1,z_2,z)
&\=z\,z_2^{-1}(1-z_1z^{-1})\,
I_{m,\hbar}(z_1,z_2,z)\,,\\
I_{m,\hbar}(z_1,e^{\hbar}z_2,z)
&\=z\,z_1^{-1}(1-z_2z^{-1})\,
I_{m,\hbar}(z_1,z_2,z)\,,\\
I_{m,\hbar}(z_1,z_2,e^{\hbar}z)
&\=(1-z)\,z_1z_2z^{-2}e^{(m-1)\hbar}
I_{m,\hbar}(z_1,z_2,z)\,,\\
I_{m+1,\hbar}(z_1,z_2,z)
&\=
z\,I_{m,\hbar}(z_1,z_2,z)\,,
\end{aligned}
\ee
which can be derived from Equation~\eqref{eq.qdiff.of.psih}. In fact, the
function $I_{m,\hbar}$ is holonomic of rank $1$, a fact that we will not use
explicitly~\cite{PWZ}. Therefore, we see that
\be\label{eq.Ip3}
\begin{tiny}
\begin{aligned}
&I_{m,\hbar}(e^{\hbar}z_1,z_2,z)
\=z_2^{-1}I_{m+1,\hbar}(z_1,z_2,z)-z_1z_2^{-1}I_{m,\hbar}(z_1,z_2,z)\,,\\
&I_{m,\hbar}(e^{\hbar}z_1,z_2,(e^{\hbar}z_1+z_2-e^{\hbar}z_1z_2)e^{w\hbar^{1/2}})\\
&\=z_2^{-1}I_{m+1,\hbar}(z_1,z_2,(e^{\hbar}z_1+z_2-e^{\hbar}z_1z_2)e^{w\hbar^{1/2}})
-z_1z_2^{-1}I_{m,\hbar}(z_1,z_2,(e^{\hbar}z_1+z_2-e^{\hbar}z_1z_2)e^{w\hbar^{1/2}})\,,\\
&I_{m,\hbar}(z_1,z_2,(z_1+z_2-z_1z_2)e^{z\hbar^{1/2}+\hbar})\\
&\=z_1z_2e^{(m-1)\hbar}I_{m-2,\hbar}(z_1,z_2,(z_1+z_2-z_1z_2)e^{z\hbar^{1/2}})
-z_1z_2e^{(m-1)\hbar}I_{m-1,\hbar}(z_1,z_2,(z_1+z_2-z_1z_2)e^{z\hbar^{1/2}})\,.\\
\end{aligned}
\end{tiny}
\ee
Let us define $\widehat{J}_{m,\hbar}$ and $J_{m,\hbar}$ by the equation
\be\label{eq.Ip4}
\begin{aligned}
\widehat{J}_{m,\hbar}(z_1,z_2)
&\=
\frac{1}{\sqrt{(1-z_1)(1-z_2)}}
\exp\Big(-\frac{\Li_{2}(z_1)}{\hbar}
-\frac{\Li_{2}(z_2)}{\hbar}\Big)
J_{m,\hbar}(z_1,z_2)\\
&\=\Big\langle\frac{\sqrt{z_1z_2(1-z_1)(1-z_2)}}{z_1+z_2-z_1z_2}
\exp\Big(
\frac{\delta}{2}z^2\Big)
I_{m,\hbar}(z_1,z_2,z_0e^{w\hbar^{1/2}})
\Big\rangle_{w,\delta}\,.
\end{aligned}
\ee
If we multiply both sides of the Equations~\eqref{eq.Ip3} with the
factors from Equation~\eqref{eq.Ip1}, take the bracket of both sides,
and apply Lemma~\ref{lem.ff2} to change
coordinates and the Gaussian, we find that $\widehat{J}_{\hbar,m}(z_1,z_2)$
satisfies the $q$--difference equations
\be
\begin{aligned}
\widehat{J}_{m,\hbar}(e^{\hbar}z_1,z_2)
&\=z_2^{-1}\widehat{J}_{m+1,\hbar}(z_1,z_2)
-z_1z_2^{-1}\widehat{J}_{m,\hbar}(z_1,z_2)\,,\\
\widehat{J}_{m,\hbar}(z_1,e^{\hbar}z_2)
&\=z_1^{-1}\widehat{J}_{m+1,\hbar}(z_1,z_2)
-z_1^{-1}z_2\widehat{J}_{m,\hbar}(z_1,z_2)\,,\\
\widehat{J}_{m,\hbar}(z_1,z_2)
&\=z_1z_2e^{(m-1)\hbar}\widehat{J}_{m-2,\hbar}(z_1,z_2)
-z_1z_2e^{(m-1)\hbar}\widehat{J}_{m-1,\hbar}(z_1,z_2)\,.
\end{aligned}
\ee
From these relations we can derive the equations
\be
\begin{aligned}
0&\=(z_1-z_1e^{(m+1)\hbar}+z_1^2e^{(m+1)\hbar}-z_1)\widehat{J}_{m,\hbar}(z_1,z_2)\\
&\qquad+(z_1z_2e^{(m+1)\hbar}-z_2+qz_1)\widehat{J}_{m,\hbar}(e^{\hbar}z_1,z_2)
+z_2\widehat{J}_{m,\hbar}(e^{2\hbar}z_1,z_2)\,,\\
0&\=(z_2-z_2e^{(m+1)\hbar}+z_2^2e^{(m+1)\hbar}-z_2)\widehat{J}_{m,\hbar}(z_1,z_2)\\
&\qquad+(z_2z_1e^{(m+1)\hbar}-z_1+qz_2)\widehat{J}_{m,\hbar}(z_1,e^{\hbar}z_2)
+z_1\widehat{J}_{m,\hbar}(z_1,e^{2\hbar}z_2)\,.\\
\end{aligned}
\ee
These $q$--difference equations give rise to equations for $J_{m,\hbar}$ given by
\be\label{eq.qJ}
\begin{aligned}
0&\=(z_1-z_1e^{\hbar}+z_1^2e^{\hbar}-z_1)J_{m,\hbar}(z_1,z_2)\\
&\qquad+(z_1z_2e^{\hbar}-z_2+qz_1)\sqrt{\frac{1-z_1}{1-e^{\hbar}z_1}}
\exp\Big(\frac{\Li_{2}(z_1)-\Li_{2}(e^{\hbar}z_1)}{\hbar}\Big)
J_{m,\hbar}(e^{\hbar}z_1,z_2)\\
&\qquad+z_2\sqrt{\frac{1-z_1}{1-e^{2\hbar}z_1}}\exp\Big(\frac{\Li_{2}(z_1)
-\Li_{2}(e^{2\hbar}z_1)}{\hbar}\Big)J_{m,\hbar}(e^{2\hbar}z_1,z_2)\,,\\
0&\=(z_2-z_2e^{\hbar}+z_2^2e^{\hbar}-z_2)J_{m,\hbar}(z_1,z_2)\\
&\qquad+(z_2z_1e^{\hbar}-z_1+qz_2)\sqrt{\frac{1-z_2}{1-e^{\hbar}z_2}}
\exp\Big(\frac{\Li_{2}(z_2)-\Li_{2}(e^{\hbar}z_2)}{\hbar}\Big)
J_{m,\hbar}(z_1,e^{\hbar}z_2)\\
&\qquad+z_1\sqrt{\frac{1-z_2}{1-e^{2\hbar}z_2}}
\exp\Big(\frac{\Li_{2}(z_2)-\Li_{2}(e^{2\hbar}z_2)}{\hbar}\Big)
J_{m,\hbar}(z_1,e^{2\hbar}z_2)\,.
\end{aligned}
\ee
These equations are also satisfied by the left hand side of
Equation~\eqref{pentagon0}, which follows from repeated application of
Equation~\eqref{eq.qdiff.of.psi}.

It is easy to see that both sides of Equation~\eqref{pentagon0} are formal power
series in $\hbar$ with coefficients rational functions of $(z_1,z_2)$ of
nonpositive degree with respect to both $z_1$ and $z_2$, thus their coefficients
embed in $\BQ\llbracket z_1^{-1}, z_2^{-1} \rrbracket$, and it suffices to prove
~\eqref{pentagon0} in the ring $\BQ\llbracket z_1^{-1}, z_2^{-1} \rrbracket
\llbracket \hbar \rrbracket$. In this case, we consider the linear $q$-difference
equations~\eqref{eq.qJ} over the ring $\BQ\llbracket z_1^{-1}, z_2^{-1} \rrbracket
\llbracket \hbar \rrbracket$. Looking at the corresponding Newton polytopes, 
the leading monomials that appear as coefficients of this $q$--difference equation
~\eqref{eq.qJ} at $\infty$ are $z_1^{2}(1+\mathrm{O}(z_1^{-1})\mathrm{O}(z_2^{0}))$
and $z_1^{2}z_2(1+\mathrm{O}(z_1^{-1})\mathrm{O}(z_2^{0}))$ for the first equation,
and $z_2^{2}(1+\mathrm{O}(z_1^{0})\mathrm{O}(z_2^{-1}))$ and
$z_1z_2^{2}(1+\mathrm{O}(z_1^{0})\mathrm{O}(z_2^{-1}))$ for the second equation,
respectively. The structure of these monomials and their Newton polytopes
shows that there is a unique solution to these equations in
$\BQ\llbracket z_1^{-1},z_2^{-1}\rrbracket\llbracket\hbar\rrbracket$ determined
by the value at $z_1=z_2=\infty$. Therefore, the identity in
Equation~\eqref{pentagon0} reduces to its specialization at $z_1=z_2=\infty$.
Since
\be
\begin{aligned}
\psi_\hbar(0,\infty)\,\psi_\hbar(0,\infty)
&\=
e^{\frac{\hbar}{6}}\quad\text{and}\\
e^{- \frac \hbar {24}}
\big\langle\psi_\hbar(-w,0)^2
\,\psi_\hbar(w,\infty)
\big\rangle_{w,\delta}
&\=
e^{- \frac \hbar {24}}
\Big\langle\exp\Big(
\frac{\hbar}{12}-\frac{z\hbar^{\frac{1}{2}}}{2}
\Big)\Big\rangle_{w,\delta}
\=
e^{\frac{\hbar}{6}}\,,
\end{aligned}
\ee
this completes the proof of Equation~\eqref{pentagon0}.

\medskip

Going back to the general case of $x,y$, Equation~\eqref{eq.pentagon} follows
from Equation~\eqref{eq.psid} together with~\eqref{pentagon0} using
Lemma~\ref{lem.ff2} to shift the Gaussian and with a change of integration
variable
\be
w \mapsto w+\frac{1}{\hroot}\log\left(\frac{z_1+z_2-z_1z_2}{
z_1e^{x_1\hbar^{1/2}}+z_2e^{x_2\hbar^{1/2}}-z_1z_2e^{(x_1+x_2)\hbar^{1/2}}}\right)
+x+y - \frac{xz_1+yz_2}{z_0}\,.
\ee
The detailed calculation is given in Appendix~\ref{app.pentagon}.
\end{proof}

\subsection{\texorpdfstring{Inversion formula for $\psi_\hbar$}{Inversion formula for psi}}
\label{sub.inversion}

In this section, we give an inversion formula for $\psi_\hbar$ analogous to the
inversion formula~\eqref{doublephi} for the Faddeev quantum dilogarithm.
Although we will not use this formula explicitly in our paper, we include it
for completeness.

\begin{lemma}
\label{lem.inversion}
We have:
\be
\label{inv1}
\psi_{\hbar}(x,z)
  \frac{1-z^{-1}e^{-x\hbar^{1/2}}}{1-z^{-1}}
  \psi_{\hbar}(-x,1/z) \= \exp\Big(-
  \frac{x\hbar^{\frac{1}{2}}}{2}+\frac{\hbar}{12}\Big),
\ee  
\be
\label{inv2}
\widehat{\psi}_{\hbar}(0,z)
\widehat{\psi}_{\hbar}(0,1/z) 
\= \frac{\sqrt{-z}}{1-z}\exp\Big(\frac{\pi^2}{6\hbar}+\frac{1}{2\hbar}
\log(-z)^2+\frac{\hbar}{12}\Big) \,.
\ee
\end{lemma}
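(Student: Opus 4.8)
The plan is to prove the inversion formula~\eqref{inv1} for $\psi_\hbar$ directly from the definition~\eqref{psih}, and then deduce~\eqref{inv2} by tracking the prefactors that relate $\psi_\hbar$ to its completion $\widehat{\psi}_\hbar$ in~\eqref{eq.psicomp}. The underlying input is the classical inversion/reflection symmetry of the polylogarithms $\Li_{2-k-\ell}$ under $z \mapsto 1/z$, combined with the parity $B_k = (-1)^k B_k$ of the Bernoulli numbers (which vanish for odd $k \geq 3$, with the single exception $B_1 = -\tfrac12$).

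First I would take the logarithm of the left-hand side of~\eqref{inv1}. The term $\log\big(\tfrac{1-z^{-1}e^{-x\hroot}}{1-z^{-1}}\big)$ can be expanded using $\log(1-z^{-1}e^{-x\hroot}) = -\Li_1(z^{-1}e^{-x\hroot}) = -\sum_{\ell \geq 0} \frac{(-x)^\ell \hbar^{\ell/2}}{\ell!}\Li_{1-\ell}(z^{-1})$, and similarly for the constant term, so the whole middle factor contributes $\sum_{\ell \geq 1}$-type corrections to the exponent. Meanwhile $\log\psi_\hbar(x,z) + \log\psi_\hbar(-x,1/z)$ is, by~\eqref{psih}, the sum
\be
-\sum_{\underset{k+\ell/2>1}{k,\ell\geq0}} \frac{B_k \hbar^{k+\ell/2-1}}{\ell!\,k!}\Big( x^\ell \Li_{2-k-\ell}(z) + (-x)^\ell \Li_{2-k-\ell}(1/z)\Big)\,.
\ee
The key step is then the polylogarithm inversion identity: for $n \geq 1$ one has the clean relation $\Li_{-n}(z) + (-1)^n \Li_{-n}(1/z) = 0$, while for $n = 0$ one has $\Li_0(z) + \Li_0(1/z) = -1$, and for $n = 1$ one has $\Li_1(z) - \Li_1(1/z) = \log(-z) + i\pi$ (mod appropriate branch choices); here all indices $2-k-\ell$ with $k+\ell/2 > 1$ are $\leq 0$, and one checks that in the pair $(k,\ell)$ the sign $(-1)^\ell$ on the $1/z$ term is exactly the sign $(-1)^{2-k-\ell}$ appearing in the inversion formula precisely when $k$ is even. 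So for $k$ even the two polylogarithm terms cancel except for the finitely many "low" indices $2-k-\ell \in \{0\}$ (where the constant $-1$ survives, giving the $\hbar/12$ after summing the $B_k/k!$ series — this is $\sum_{k \text{ even}} \frac{B_k}{k!}\hbar^{k-1}\cdot(-\tfrac12)\cdot(\text{something})$, i.e. the generating-function identity $\sum_{k\geq 0}\frac{B_k}{k!}t^{k} = \tfrac{t}{e^t-1}$ evaluated appropriately); for $k$ odd only $k=1$ contributes (all higher odd Bernoulli numbers vanish), producing the $\Li_1$-type terms that combine with the $\log\big(\tfrac{1-z^{-1}e^{-x\hroot}}{1-z^{-1}}\big)$ factor and the $-\tfrac{x\hroot}{2}$ term. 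Assembling all surviving contributions should give exactly $-\tfrac{x\hbar^{1/2}}{2} + \tfrac{\hbar}{12}$.

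For~\eqref{inv2}, I would set $x=0$ in~\eqref{inv1}, which kills the $-\tfrac{x\hroot}{2}$ and the middle ratio collapses to $1$, yielding $\psi_\hbar(0,z)\psi_\hbar(0,1/z) = \exp(\hbar/12)$. Then I multiply both sides by the prefactors from~\eqref{eq.psicomp}: $\widehat{\psi}_\hbar(0,z)\widehat{\psi}_\hbar(0,1/z)$ equals $\psi_\hbar(0,z)\psi_\hbar(0,1/z)$ times $\exp\!\big(-\tfrac{\Li_2(z)+\Li_2(1/z)}{\hbar} + \tfrac12(\Li_1(z)+\Li_1(1/z))\big)$. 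Now invoke the dilogarithm inversion formula $\Li_2(z) + \Li_2(1/z) = -\tfrac{\pi^2}{6} - \tfrac12\log(-z)^2$ (the version used already in the proof of Theorem~\ref{thm.fourier}), together with $\Li_1(z) + \Li_1(1/z) = -\log(1-z) - \log(1-1/z) = -\log\!\big(\tfrac{(1-z)^2}{-z}\big) = \log(-z) - 2\log(1-z)$. Substituting gives the claimed $\tfrac{\sqrt{-z}}{1-z}\exp\big(\tfrac{\pi^2}{6\hbar} + \tfrac{1}{2\hbar}\log(-z)^2 + \tfrac{\hbar}{12}\big)$, once one reconciles the $\exp(\tfrac12\log(-z))$ with $\sqrt{-z}$ and $\exp(-\log(1-z))$ with $1/(1-z)$.

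The main obstacle will be bookkeeping of the branch-of-logarithm and $i\pi$ ambiguities in the polylogarithm inversion formulas, and making sure they are consistent with how $\Li_0, \Li_1, \Li_2$ are treated as formal symbols in the paper (recall $\Li_0(z) = z/(1-z)$ and $\Li_1(z) = -\log(1-z)$ are taken literally as rational/elementary functions, while $\Li_2$ appears only through the combination that the dilog inversion formula already normalizes). In particular the odd-$k$ analysis requires the identity $\Li_{-n}(z) = (-1)^{n+1}\Li_{-n}(1/z)$ for $n \geq 1$, which is the finite-sum rational-function identity $\sum (\text{Eulerian})$ and is elementary but must be verified in the formal setting. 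I expect that once the formal polylogarithm inversion relations are pinned down, the rest is a routine — if slightly lengthy — collation of Bernoulli-number generating functions, so the bulk of the write-up will be setting up those conventions carefully rather than any deep argument.
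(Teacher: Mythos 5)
Your proposal follows essentially the same route as the paper's proof: both identities are obtained by direct logarithmic expansion using the polylogarithm inversion formulas $\Li_{-n}(z)+(-1)^n\Li_{-n}(1/z)=0$ for $n\geq 1$, $\Li_0(z)+\Li_0(1/z)=-1$, and the $\Li_1$, $\Li_2$ inversions, with the middle ratio absorbed as a sign flip of $B_1$ so that the only surviving terms are $(k,\ell)=(2,0)$ and $(1,1)$, giving $\tfrac{\hbar}{12}$ and $-\tfrac{x\hbar^{1/2}}{2}$. Two cosmetic remarks: the $\Li_1$-difference formula you quote (with the $+i\pi$) is never actually needed for either identity and disagrees in sign with the paper's convention $\Li_1(z)-\Li_1(1/z)=-\log(-z)$, and the $\tfrac{\hbar}{12}$ arises from the single term $(k,\ell)=(2,0)$ rather than from summing a Bernoulli generating function.
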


\begin{proof}
These formulas all follow from the well-known inversion formulas
for the polylogarithms (see for example~\cite{Oesterle}):
\bea
  \Li_{2}(z)+\Li_{2}(1/z)
  &\=
  -\frac{\pi^2}{6}-\frac{1}{2}\log(-z)^2\,,\\
  \Li_{1}(z)-\Li_{1}(1/z)
  &\=
  -\log(-z)\,,\\
  \Li_{0}(z)+\Li_{0}(1/z)
  &\=
  -1\,,\\
  \Li_{-n}(z)+(-1)^n\Li_{-n}(1/z)
  &\=0 \qquad (n>0) \,.
\eea
Using these relations, we have
\bea
\widehat{\psi}_{\hbar}(0,z)
\widehat{\psi}_{\hbar}(0,1/z)
&\=
\exp\Big(\!
-\sum_{k\in\BZ_{\geq0}}
\frac{B_{k}\,\hbar^{k-1}}{k!}(\Li_{2-k}(z)+\Li_{2-k}(1/z))\Big)\\
&\=
\frac{\sqrt{-z}}{1-z}\exp\Big(\frac{\pi^2}{6\hbar}+\frac{1}{2\hbar}
\log(-z)^2+\frac{\hbar}{12}\Big)\,.
\eea
Similarly, we find that
\be
\begin{aligned}
  &\psi_{\hbar}(x,z)
  \frac{1-z^{-1}e^{-x\hbar^{1/2}}}{1-z^{-1}}
  \psi_{\hbar}(-x,1/z)\\
  &\=
  \exp\bigg(\!-\!\!\sum_{\underset{k+\frac{\ell}{2}>1}{k,\ell\in\BZ_{\geq0}}}
  \frac{B_{k}\,x^{\ell}\,\hbar^{k+\frac{\ell}{2}-1}}{\ell!\,k!}
  \big(\Li_{2-k-\ell}(z)+(-1)^{k+\ell}\Li_{2-k-\ell}(1/z)\big)\bigg)\\
  &\=
  \exp\Big(-
  \frac{x\hbar^{\frac{1}{2}}}{2}+\frac{\hbar}{12}\Big)\,.
\end{aligned}
\ee
Notice that this is exactly the factor that appeared in the remark after
Equation~\eqref{psih}, which gave the relation between the $\psi_{\hbar}$ used
in the current paper and the ones used in~\cite[Eq.~1.9]{DG}.
Indeed, $\frac{1-z^{-1}}{1-z^{-1}e^{-x\hbar^{1/2}}}\psi_{\hbar}(-x,1/z)^{-1}$ is
exactly the series used in~\cite{DG} where the factor
$\frac{1-z^{-1}}{1-z^{-1}e^{-x\hbar^{1/2}}}$ amounts to swapping the sign of $B_{1}$
as done there.
\end{proof}


\section{Elementary invariance properties}
\label{sec.elementary}

In this section, we review some basic choices needed to define the
the Neumann--Zagier data for a triangulation with $N$ tetrahedra, namely:
\begin{itemize}
\item[(a)] an ordering of the $N$ tetrahedra,
\item[(b)] an ordering of the $N$ edges,
\item[(c)] an edge equation to remove,   
\item[(d)] a path to represent the meridian curve,
\item[(e)] a flattening.
\end{itemize}
A change of these choices changes the corresponding Neumann--Zagier data in a simple
way, which we now describe. Fix a triangulation and the choices needed to define
Neumann--Zagier data
\be
\Xi\=(A,B,\nu,z,f,f'')\,.
\ee
Regarding choice (a), suppose that $\sigma\in S_{N}$ is a permutation (and also the
associated matrix) of our labelling of the tetrahedra. Then the Neumann--Zagier
matrices transform as follows
\be
  \Xi\=(A,B,\nu,z,f,f'')
  \mapsto
  (A\sigma,B\sigma,\nu,\sigma^{-1} z,\sigma^{-1} f,\sigma^{-1} f'')
  \=\Xi\cdot\sigma \,.
\ee
This implies that the integrand $f^\Xi_\hbar(x,z)$ of Equation~\eqref{integrand}
and the quadratic form $\Lambda$ of Equation~\eqref{lambda} satisfy
\be
f_{\Xi\cdot\sigma,\hbar}(\sigma^{-1} x,\sigma^{-1} z) \= f^\Xi_\hbar(x,z)
\qquad
\Lambda^{\Xi\cdot\sigma} \= \sigma^t \Lambda^\Xi \sigma\,,
\ee
which combined with the fact that integration is invariant under a linear change
of variables (see part (a) of Lemma~\ref{lem.ff1}), implies that
$\Phi^{\Xi\cdot\sigma}(\hbar)=\Phi^\Xi(\hbar)$.  

Choices (b), (c) and (d) are a special case of the following transformation of
$P\in\GL_{N}(\BZ)$ acting on Neumann--Zagier data via
\be
\Xi\=(A,B,\nu,z,f,f'') \mapsto (PA,PB,P\nu,z,f,f'') \=P\cdot\Xi\,.
\ee
It follows that the integrand and the quadratic form satisfy 
\be
f^{P\cdot\Xi}_\hbar(x,z) \= f^\Xi_\hbar(x,z), \qquad
\Lambda^{P \cdot\Xi} \= \Lambda^\Xi \,,
\ee
which implies again that $\Phi^{\Xi\cdot\sigma}(\hbar)=\Phi^\Xi(\hbar)$.

Finally, if $\Xi$ and $\widetilde\Xi$ differ by a choice of flattening, then
it is easy to see that
\be
f^{\widetilde\Xi}_\hbar(x,z) \= e^{c\hbar} f^\Xi_\hbar(x,z), \qquad
\Lambda^{\widetilde\Xi} \= \Lambda^\Xi 
\ee
for some $c \in \frac{1}{8}\BZ$, which implies that
$\Phi^{\widetilde\Xi}(\hbar)=e^{c \hbar}\, \Phi^\Xi(\hbar)$. 



\section{Invariance under the choice of quad}
\label{sec.quad}

The definition of the series $\Phi^\Xi(\hbar)$ requires some choices,
some of which were described and dealt with in the previous
Section~\ref{sec.elementary}. What remains to complete the proof of
Theorem~\ref{thm.1} is the independence of $\Phi^\Xi(\hbar)$
under the choice of a nondegenerate quad, and the independence
under the 2--3 Pachner moves that connect two ideal triangulations.

In this section, we will prove that $\Phi^\Xi(\hbar)$ is independent
of the choice of a nondegenerate quad. Recall that to each pair of opposite
edges of an ideal tetrahedron there is an associated variable
called a shape variable. When defining Neumann--Zagier data (see
Section~\ref{sub.NZ}), we must
choose an edge for each tetrahedron, which associates one of these shapes.
There are three possible choices, which leads to the action on $\BZ/3\BZ$ on
the each column of the Neumann--Zagier data. All in all, for a triangulation
with $N$ tetrahedra, this leads to $3^{N}$ choices on of Neumann--Zagier
data. On the other hand, the definition of the series $\Phi^{\Xi}(\hbar)$
requires a choice of a nondegenerate quad, i.e., one for which $\det(B) \neq 0$
(such quads always exist~\cite[Lem.A.3]{DG}). In this section, we will show
that any of the $3^N$ choices of quad with $\det(B) \neq 0$
lead to the same $\Phi^{\Xi}(\hbar)$ series.

\begin{theorem}
\label{thm.quadinvariance}  
The series $\Phi^{\Xi}(\hbar)$ is independent of the choice of a nondegenerate
quad. 
\end{theorem}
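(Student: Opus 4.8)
The plan is to reduce the statement to the change of a quad at a single tetrahedron, and then to realise that change inside the formal Gaussian integral defining $\Phi^\Xi(\hbar)$ by substituting the Fourier identity of Theorem~\ref{thm.fourier} (or Corollary~\ref{cor.fourier}) for the single quantum--dilogarithm factor that is affected, and collapsing the resulting double integral with Fubini.

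\emph{Reduction and combinatorics.} The three quads at a tetrahedron form a single $\BZ/3\BZ$--orbit, so it suffices to treat one generator; I would use the generator that eliminates the shape $z''_j$ rather than $z'_j$ at the $j$-th tetrahedron, which replaces the $j$-th shape $z_j$ by $z'_j=1/(1-z_j)$ and cyclically permutes $(z_j,z'_j,z''_j)$. One checks directly that this sends the $j$-th columns of $A$ and $B$ by the order--three integer matrix $\left(\begin{smallmatrix}0&-1\\1&-1\end{smallmatrix}\right)$, i.e.\ $A_{\cdot j}\mapsto -B_{\cdot j}$ and $B_{\cdot j}\mapsto A_{\cdot j}-B_{\cdot j}$, changes $\nu$ by $\nu\mapsto\nu+B_{\cdot j}$, and transports the flattening triple by the same cyclic relabeling in the $j$-th coordinate (this last is forced, since the flattening equations~\eqref{3f} are symmetric in $(z,z',z'')$); write $\Xi'$ for the resulting datum. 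Two bookkeeping facts matter: the new $j$-th diagonal entry of the quadratic form is $1/(1-z'_j)=1-z_j^{-1}$, which is exactly the Gaussian form carried by the new integration variable in Theorem~\ref{thm.fourier}; and the other generator is handled either by iterating, or directly from Corollary~\ref{cor.fourier}, which performs $z\mapsto 1-z^{-1}$ in one step. Two arbitrary nondegenerate quads are joined by composing such single--tetrahedron moves; since every identity below is an identity in $\BQ(z)\llbracket\hbar\rrbracket$ with $z$ generic, the only requirement is nondegeneracy of the data involved, and the passage between nondegenerate quads is handled as in~\cite{DG,GZ:kashaev}.

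\emph{The core computation.} Write $\Phi^\Xi(\hbar)=\big\langle E^\Xi_\hbar(x)\prod_{i=1}^N\psi_\hbar(x_i,z_i)\big\rangle_{x,\Lambda}$, with $E^\Xi_\hbar$ the exponential prefactor of~\eqref{integrand}. Applying Theorem~\ref{thm.fourier} to the single factor $\psi_\hbar(x_j,z_j)$ turns $\Phi^\Xi(\hbar)$ into a double formal Gaussian integral: over $(x_1,\dots,x_N)$ with form $\Lambda$, and over a fresh variable $y$ with form $1-z_j^{-1}$; since $y$ is new these merge into one formal Gaussian integral over $(x_1,\dots,x_N,y)$ with the block--diagonal form $\Lambda\oplus(1-z_j^{-1})$. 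In the merged integrand $x_j$ now enters the $\psi$--factors only through the combination $y+x_jz_j/(1-z_j)$, so a unipotent change of variables $y\mapsto y+x_jz_j/(1-z_j)$ (Lemma~\ref{lem.ff1}(a)) removes $x_j$ from all $\psi$--factors, at the cost of replacing $\Lambda\oplus(1-z_j^{-1})$ by a congruent symmetric matrix $\widetilde\Lambda$. Now $x_j$ occurs only polynomially (indeed only linearly, with an $\hroot$ coefficient, inside $E^\Xi_\hbar$), so Fubini for formal Gaussian integration (Lemma~\ref{lem.ff1}(c)) integrates it out, leaving a formal Gaussian integral over the remaining $N$ variables $(x_{i\ne j},y)$ whose quadratic form is the Schur complement of the $(x_j,x_j)$--entry of $\widetilde\Lambda$, times an $\exp(\ast\,\hbar)$ factor and an $\exp(\ast\,\hroot)$--linear factor; relabeling $y$ as the new $j$-th variable identifies the $\psi$--part of the integrand with $\prod_i\psi_\hbar(x_i,z^{\Xi'}_i)$.

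\emph{Matching.} It remains to identify the output with $\Phi^{\Xi'}(\hbar)$. Two of the three ingredients are immediate from the construction of $\Xi'$: only the $j$-th shape changed (to $z'_j$), so the surviving product of $\psi$'s is literally the one for $\Xi'$; and the accumulated scalar and linear--in--$x$ exponential factors assemble into the prefactor $E^{\Xi'}_\hbar$ of~\eqref{integrand}, a short computation in which the $\hbar$--term is recognised using the flattening relation $A'f'+B'f''=\nu'$ for $\Xi'$, and which must come out with no residual $e^{c\hbar}$ precisely because the flattening has been transported canonically. The remaining point, which I expect to be the main obstacle, is the purely linear--algebraic identity that the Schur complement of $\widetilde\Lambda$ equals the matrix $\Lambda'=-B'^{-1}A'$ plus the diagonal matrix with $i$-th entry $1/(1-z^{\Xi'}_i)$ of~\eqref{lambda}. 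Because only the $j$-th rows and columns of $A$ and $B$ (and the $j$-th diagonal entry) are touched, this reduces to a rank--one--type Schur--complement manipulation; I would carry it out using the Neumann--Zagier symplectic relation, equivalently the symmetry of $AB^t$, which is what makes both $\Lambda$ and $\Lambda'$ symmetric and what collapses the block inverse. Getting every sign in the Schur complement right and invoking the symplectic relation at exactly the right step is where the real work of the section lies.
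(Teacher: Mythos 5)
Your local mechanism is exactly the one the paper uses: substitute Theorem~\ref{thm.fourier} (or Corollary~\ref{cor.fourier}) for the affected $\psi$--factors, merge the Gaussians, shift the new variable so that the old variable leaves the $\psi$--arguments, integrate it out by Fubini, and match the resulting Schur complement with $\Lambda^{\widetilde\Xi}$ using the symplectic relation (in the paper this is Lemma~\ref{lem.quad.inv}, whose key input is the factorization~\eqref{eq.BBt} of $\widetilde B$ through $B$). But your global strategy --- reducing to a chain of single--tetrahedron quad moves --- has a genuine gap that the paper's proof is specifically engineered to avoid. Each single move in your chain is an identity between two series $\Phi$, and even the Fubini step inside that single move requires the relevant $1\times1$ entry (of the form $1-Q_{jj}$ or $-Q_{jj}$) to be nonzero, which by the one--column version of~\eqref{eq.BBt} is equivalent to invertibility of the $B$--matrix of the \emph{intermediate} quad (and one also needs its $\Lambda$ invertible for the intermediate bracket to make sense). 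So your reduction needs the statement that any two nondegenerate quads are connected by single--tetrahedron moves all of whose intermediate quads are nondegenerate. That is not established anywhere: \cite[Lem.A.3]{DG} only gives existence of one nondegenerate quad, and neither \cite{DG} nor \cite{GZ:kashaev} supplies the connectivity-through-nondegenerate-quads claim you defer to them. The paper sidesteps this entirely by treating an arbitrary quad change between two nondegenerate quads in one simultaneous step: it partitions the tetrahedra into the blocks $N_0,N_1,N_2$, applies Theorem~\ref{thm.fourier} and Corollary~\ref{cor.fourier} to all affected tetrahedra at once, and then performs a single Fubini over the whole block $(x^{(1)},x^{(2)})$, whose quadratic block $\left(\begin{smallmatrix}Q_{11}-I & Q_{12}\\ Q_{12}^t & Q_{22}\end{smallmatrix}\right)$ is invertible precisely because $\det(\widetilde B)\neq 0$ --- i.e.\ only the nondegeneracy of the two endpoint quads is ever used.

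Two smaller points. The bookkeeping of the move sending $z_j\mapsto z_j'$ has a sign slip: eliminating $\log z_j''=\pi i-\log z_j-\log z_j'$ gives $\widetilde A_{\cdot j}=-B_{\cdot j}$, $\widetilde B_{\cdot j}=A_{\cdot j}-B_{\cdot j}$ and $\nu\mapsto \nu-B_{\cdot j}$ (compare $\widetilde\nu=\nu-B_1 1-A_2 1$ in the paper), not $\nu+B_{\cdot j}$. And you should not expect the prefactors to close up with no residual exponential: the $e^{\mp\hbar/24}$ factors from the Fourier identities and the flattening bookkeeping leave an overall $e^{c\hbar}$ with $c\in\frac{1}{24}\BZ$, exactly as in Equation~\eqref{eq.qpf}, which is absorbed by the (flattening--dependent) normalization rather than cancelling identically.
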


\begin{proof}
Fix two non--degenerate NZ data $\Xi=(A,B,\nu,z,f,f'')$ and 
$\widetilde{\Xi}=(\widetilde{A},\widetilde{B},\widetilde{\nu},\widetilde{z},
\widetilde{f},\widetilde{f}'')$ related by a quad change of the same ideal
triangulation. The nondegeneracy assumption implies that $\det(B) \neq 0$ and 
$\det(\widetilde{B})\neq0$. 

After reordering the tetrahedra (which does not change the $\Phi^\Xi(\hbar)$
series, as follows from Section~\ref{sec.elementary}), we can assume
that $\widetilde{\Xi}$ is obtained by applying a change in quad that fixes
the first $N_0$ shapes $z^{(0)}$, replaces the next $N_1$ shapes $z^{(1)}$
by $z'^{(1)}$ and replaces the next $N_2$ shapes $z^{(2)}$ by $z'^{(2)}$. (Recall
that $z'=1/(1-z)$ and $z''=1-1/z$).

\medskip

This partitions the shapes $z=(z^{(0)},z^{(1)},z^{(2)})$ into three sets of 
size $N_0,N_1,N_2$ and the matrices $A$ and $B$ into three block matrices
$A_i$ and $B_i$ of size $N \times N_i$ for $i=0,1,2$
\bea
A &\= (A_0\,|\,A_1\,|\,A_2)\,,\qquad
B &\= (B_0\,|\,B_1\,|\,B_2)\,,\qquad
\eea
and similarly for the flattening $f=(f^{(0)},f^{(1)},f^{(2)})$ and
$f''=(f''^{(0)},f''^{(1)},f''^{(2)})$. After the quad moves, the corresponding
matrices and vectors are given by
\bea
\tilde A &\= (A_0\,|-B_1\,|-A_2+B_2)\,,\quad
& &\tilde B & &\!\!\!=\; (B_0\,|\,A_1-B_1\,|-A_2)\,,\\
\tilde \nu &\= \nu - B_1  1 - A_2 1\,,\quad
& &\tilde z & &\!\!\!=\; 
(z^{(0)},z'^{(1)},z''^{(2)})\,,\\
\tilde f &\= (f^{(0)},1-f^{(1)}-f''^{(1)},f''^{(2)})\,,\quad
& &\tilde f'' & &\!\!\!=\; 
(f''^{(0)},f^{(2)},1-f^{(2)}-f''^{(2)})\,.
\eea
We also partition the vector of formal Gaussian integration variables
$x=(x^{(0)},x^{(1)},x^{(2)})$, as well as the symmetric matrix $Q:=B^{-1}A$
\be
Q\=
\begin{pmatrix}
Q_{00} & Q_{01} & Q_{02}\\
Q_{01}^t & Q_{11} & Q_{12}\\
Q_{02}^t & Q_{12}^t & Q_{22}\\
\end{pmatrix}\,.
\ee
With the above notation, we have
\be
\Phi^{\Xi}(\hbar) \= \langle I_0 \rangle_{x,\Lambda_0}
\ee
where
\[
I_0 \= \exp\Big(\frac\hbar8 f^t B^{-1} A f
-\frac{\hsqrt}2 x^t(B^{-1}\nu-1)\Big)
\prod_{j=1}^N \psi_{\hbar}(x_j,z_j)
\]
and
\begin{equation*}
\Lambda_0\=\diag(z')-Q \,.
\end{equation*}

Applying the first quad move as in Theorem~\ref{thm.fourier} to the $\psi_\hbar$
with arguments of $z^{(1)}$ and the second quad move as in its
Corollary~\ref{cor.fourier} to the $\psi_\hbar$ with arguments $z^{(2)}$, we obtain
that
\be
\Phi^{\Xi}(\hbar) \= \langle I_1 \rangle_{(x,y,w), \Lambda_1}
\ee
where
\begin{align*}
I_1 &\= 
\exp\Big(\frac\hbar8 f^t B^{-1} A f-\frac{\hsqrt}2{x^{(0)}}^t(B^{-1}\nu-1)^{(0)}
-
\frac{N_1\hbar}{24}
- \frac \hsqrt 2 {x^{(1)}}^t(B^{-1}\nu-1)^{(1)}\\
&\qquad\qquad\quad+ \sum_{j=1}^{N_1}
\Big(y_j+\frac{x_j^{(1)}z_j^{(1)}}{1-z_j^{(1)}}\Big)
\frac\hsqrt2
+
\frac{N_2\hbar}{24}
- \frac \hsqrt 2 {x^{(2)}}^t(B^{-1}\nu-1)^{(2)}
-\sum_{j=1}^{N_2}x_j^{(2)} \frac\hsqrt2\Big)\\
&\times\prod_{j=1}^{N_0} \psi_{\hbar}(x_j^{(0)},z_j^{(0)})
\prod_{j=1}^{N_1}\psi_{\hbar}\Big(y_j+\frac{x_j^{(1)}z_j^{(1)}}{1-z_j^{(1)}},
\frac1{1-z_j^{(1)}}\Big)
\prod_{j=1}^{N_2}
\psi_{\hbar}\Big(w_j - \frac{x_j^{(2)}}{1-z_j^{(2)}},1-{z_j^{(2)}}^{-1}\Big),
\end{align*}
$y$ and $w$ are vectors of size $N_1$ and $N_2$, respectively, and  
\[
\Lambda_1 \= 
\begin{pmatrix}
\Lambda & 0 & 0 \\
0 & \diag(z''^{(1)}) & 0 \\
0 & 0 & \diag(z^{(2)})-I
\end{pmatrix} \,.
\]
Making the change of variables
$y_j \mapsto y_j - \frac{x_j^{(1)}z_j^{(1)}}{1-z_j^{(1)}}$
and $w_j \mapsto w_j + \frac{x_j^{(2)}}{1-z_j^{(2)}}$ and using Lemma~\ref{lem.ff1}
we obtain that
\be
\Phi^{\Xi}(\hbar) \= \langle I_2 \rangle_{(x,y,w), \Lambda_2}
\ee
where
\begin{align*}
I_2 &\= \exp\Big(\frac\hbar8 f^t B^{-1} A f
-\frac{\hsqrt}2{x^{(0)}}^t(B^{-1}\nu-1)^{(0)}
-
\frac{N_1\hbar}{24}
- \frac \hsqrt 2 {x^{(1)}}^t(B^{-1}\nu-1)^{(1)}\\
&\qquad\qquad\quad+ y^{t}1\frac\hsqrt2
+
\frac{N_2\hbar}{24}
- \frac \hsqrt 2 {x^{(2)}}^t(B^{-1}\nu)^{(2)}
\Big)\\
&\qquad\qquad\times\prod_{j=1}^{N_0} \psi_{\hbar}(x_j^{(0)},z_j^{(0)})
\prod_{j=1}^{N_1}\psi_{\hbar}\Big(y_j,\frac1{1-z_j^{(1)}}\Big)
\prod_{j=1}^{N_2}
\psi_{\hbar}\big(w_j,1-{z_j^{(2)}}^{-1}\big)
\end{align*}
and
\[
\Lambda_2 \=
\left(\!\!
\def\arraystretch{1.25}
\begin{array}{ccc|cc}
\diag(z'^{(0)}) - Q_{00}
& -Q_{01} & -Q_{02} & 0 & 0\\
-Q_{01}^t & I-Q_{11} & -Q_{12} & I & 0\\
-Q_{02}^t & -Q_{12}^t & -Q_{22} & 0 & -I\\
\hline
0 & I & 0 & \diag(z''^{(1)})& 0 \\
0 & 0 & -I & 0 & \diag(z^{(2)})-I
\end{array}
\!\!\right) \,.
\]
Note that $x^{(1)}$ and $x^{(2)}$ do not appear in the arguments of $\psi_\hbar$.
Moreover, since $BQ=A$, we see that
\be\label{eq.BBt}
B
\begin{pmatrix}
I & Q_{01} & Q_{02}\\
0 & Q_{11}-I & Q_{12}\\
0 & Q_{12}^{t} & Q_{22}
\end{pmatrix}
\begin{pmatrix}
I & 0 & 0\\
0 & I & 0\\
0 & 0 & -I
\end{pmatrix}
\=
(B_0\,|\,A_1-B_1\,|-A_2)
\=
\widetilde{B}\,,
\ee
which implies that
\be
\det
\begin{pmatrix}
Q_{11}-I & Q_{12}\\
Q_{12}^{t} & Q_{22}
\end{pmatrix}\neq0\,,
\quad\text{and}\quad
\begin{pmatrix}
Q_{11}-I & Q_{12}\\
Q_{12}^{t} & Q_{22}
\end{pmatrix}^{-1}
\=
\begin{pmatrix}
0 & I & 0\\
0 & 0 & -I\\
\end{pmatrix}
\widetilde{B}^{-1}B
\begin{pmatrix}
0 & 0\\
I & 0\\
0 & I
\end{pmatrix}\,.
\ee
Therefore, we can apply Fubini's Theorem (Lemma~\ref{lem.ff1}) with the
integration variables $x^{(1)},x^{(2)}$,
and use Lemma~\ref{lem.quad.inv} and
the equality $Qf+f''=B^{-1}\nu$
to obtain that 
\be
\label{eq.qpf}
\Phi^{\Xi}(\hbar) \= e^{c\hbar}\langle I_3 \rangle_{\widetilde{x}, \Lambda_3}\,.
\ee
where $\widetilde{x}=(x^{(0)},y,w)$, $c\in\frac{1}{24}\BZ$
\begin{equation*}
I_3 \= \exp\Big(\frac\hbar8 \widetilde{f}^t \widetilde{B}^{-1}\widetilde{A}
\widetilde{f}
-\frac{\hsqrt}2 \widetilde{x}^t(\widetilde{B}^{-1}\widetilde{\nu}-1)\Big)
\prod_{j=1}^N \psi_{\hbar}(\widetilde{x}_j,\widetilde{z}_j)
\end{equation*}
and
\begin{equation*}
\Lambda_3\=\diag(\widetilde{z}')-\widetilde{B}^{-1}\widetilde{A}\,.
\end{equation*}
The right hand side of Equation~\eqref{eq.qpf} is exactly equal to
$e^{c\hbar}\Phi^{\widetilde{\Xi}}(\hbar)$, completing the proof of
Theorem~\ref{thm.quadinvariance}.
\end{proof}

\begin{lemma}
\label{lem.quad.inv}
With the notation as in the proof of Theorem~\ref{thm.quadinvariance}, we have
the following identities
\begin{align}
\label{eq.ABtildeq}
\widetilde{B}^{-1}\widetilde{A}
&\=
\begin{pmatrix}
Q_{00} & 0 & 0\\
0 & 0 & 0 \\
0 & 0 & -I
\end{pmatrix}
+
\begin{pmatrix}
Q_{01} & Q_{02}\\
-I & 0 \\
0 & I
\end{pmatrix}
\begin{pmatrix}
Q_{11}-I & Q_{12}\\
Q_{12}^{t} & Q_{22}
\end{pmatrix}^{-1}
\begin{pmatrix}
Q_{01}^t & -I & 0 \\
Q_{02}^t & 0 & I
\end{pmatrix},\\
\widetilde{B}^{-1}\widetilde{\nu}-1
&\=
\begin{pmatrix}
I & 0 & 0\\
0 & 0 & 0\\
0 & 0 & 0
\end{pmatrix}(B^{-1}\nu-1)
-\begin{pmatrix}0\\ 1 \\ 0\end{pmatrix}\notag\\
&\qquad+\begin{pmatrix}
-Q_{01} & -Q_{02}\\
I & 0 \\
0 & -I
\end{pmatrix}
\begin{pmatrix}
Q_{11}-I & Q_{12}\\
Q_{12}^{t} & Q_{22}
\end{pmatrix}^{-1}\left(\begin{pmatrix}0 & I & 0\\0 & 0 & I \end{pmatrix}B^{-1}\nu
-\begin{pmatrix}1 \\ 0\end{pmatrix}\right)\label{eq.Bvtildeq}
\end{align}
and for some $d\in\BZ$
\be\label{eq.fftildeq}
\begin{aligned}
&\widetilde{f}\widetilde{B}^{-1}\widetilde{\nu}
\=
f^tB^{-1}\nu+d\\
&-\Big((B^{-1}\nu)^t\begin{pmatrix}
0 & 0\\
I & 0\\
0 & I
\end{pmatrix}
-\begin{pmatrix}1 & 0\end{pmatrix}
\Big)
\begin{pmatrix}
Q_{11}-I & Q_{12}\\
Q_{12}^{t} & Q_{22}
\end{pmatrix}^{-1}
\left(\begin{pmatrix}0 & I & 0\\0 & 0 & I \end{pmatrix}B^{-1}\nu
-\begin{pmatrix}1 \\ 0\end{pmatrix}\right).
\end{aligned}
\ee
\end{lemma}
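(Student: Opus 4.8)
The plan is to verify each of the three displayed identities by direct linear-algebra manipulation starting from the block factorization~\eqref{eq.BBt} of $\widetilde B$ and the relation $BQ=A$. The starting point is that $\widetilde B = B \cdot N \cdot S$, where
\[
N \= \begin{pmatrix} I & Q_{01} & Q_{02}\\ 0 & Q_{11}-I & Q_{12}\\ 0 & Q_{12}^t & Q_{22}\end{pmatrix},
\qquad
S \= \diag(I,I,-I),
\]
so that $\widetilde B^{-1} = S^{-1} N^{-1} B^{-1} = S N^{-1} B^{-1}$ (using $S^{-1}=S$). The key algebraic input is the block-triangular inverse of $N$: writing $R := \left(\begin{smallmatrix} Q_{11}-I & Q_{12}\\ Q_{12}^t & Q_{22}\end{smallmatrix}\right)$, which is invertible by the computation already recorded just before Lemma~\ref{lem.quad.inv}, one has
\[
N^{-1} \= \begin{pmatrix} I & -(Q_{01}\,|\,Q_{02})R^{-1}\\ 0 & R^{-1}\end{pmatrix}.
\]

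\emph{Step 1 (Equation~\eqref{eq.ABtildeq}).} Compute $\widetilde A = B\, M\, S$ for the appropriate block matrix $M$ by reading off the columns of $\widetilde A = (A_0\,|\,{-}B_1\,|\,{-}A_2+B_2)$ in terms of $A_i = B_0 Q_{0i}+B_1 Q_{1i}^{(\cdot)}+\dots$; equivalently, use $A = BQ$ to write $\widetilde A = B\,(\text{explicit block matrix in the }Q_{ij})\,S$. Then $\widetilde B^{-1}\widetilde A = S N^{-1} M S$, and one expands the product blockwise. The first summand in~\eqref{eq.ABtildeq} is the ``diagonal part'' coming from the triangular top-left block of $N^{-1}$, and the rank-$(N_1+N_2)$ correction term is exactly $S$ times the Schur-complement piece $(\ast)R^{-1}(\ast)$ produced by $N^{-1}$. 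This is a mechanical but lengthy block computation; the structure of the answer is forced once one knows $N^{-1}$.

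\emph{Step 2 (Equation~\eqref{eq.Bvtildeq}).} Use $\widetilde\nu = \nu - B_1 1 - A_2 1$. Since $A_2 = B\,(Q_{02}^t\,|\,Q_{12}\,|\,Q_{22})^t$... more precisely $A_2$ is the last block of $BQ$, one writes $\widetilde\nu$ in the form $B\cdot(\text{combination})$, applies $\widetilde B^{-1} = S N^{-1} B^{-1}$, and subtracts $1$. The shift vectors $(0,1,0)^t$ and $(1,0)^t$ appearing in~\eqref{eq.Bvtildeq} come from the $B_1 1$ and $A_2 1$ corrections together with the $-1$; the $R^{-1}$-term again comes from the off-diagonal block of $N^{-1}$. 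One should double-check here that the right-hand side of~\eqref{eq.Bvtildeq} is genuinely integral modulo the lattice implicit in the flattening, but as an identity in $\BQ(z)$ it follows from the block algebra alone.

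\emph{Step 3 (Equation~\eqref{eq.fftildeq}).} This is the quadratic identity and is the main obstacle, since one must track the flattening vectors $\widetilde f,\widetilde f''$ through the change of quad. Start from $\widetilde f^t \widetilde B^{-1}\widetilde\nu$ and substitute both the formula for $\widetilde B^{-1}\widetilde\nu$ from Step 2 and the explicit $\widetilde f = (f^{(0)},1-f^{(1)}-f''^{(1)},f''^{(2)})$. Expand; the ``main term'' $f^t B^{-1}\nu$ separates out, the cross-terms between the integer shifts ($1-f-f''$ etc.) and the lattice vectors collect into the integer $d\in\BZ$, and the remaining genuinely non-integral contribution is precisely the $R^{-1}$-quadratic form displayed. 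The identity $Qf+f'' = B^{-1}\nu$ (equivalently $Af+Bf''=\nu$, i.e.~\eqref{2f}) is used to rewrite $B^{-1}\nu$ in terms of $f,f''$ wherever needed, and the symmetry of $R$ (hence of $R^{-1}$) is what allows the cross-terms to be reorganized into the single bilinear expression in~\eqref{eq.fftildeq}. I expect the bookkeeping of which pieces are integers (to be absorbed into $d$) versus which are the essential rational contribution to be the delicate part; the symmetry $R^t=R$ and the identity $AB^t$ symmetric (from the symplectic property of $(A\,|\,B)$) are the structural facts that make everything collapse.

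\emph{Remark on strategy.} Throughout, no analysis is needed — all three identities are identities of rational functions (indeed of matrices over $\BZ$ or $\BQ(z)$), so the proof is a single sustained block-matrix computation organized around the factorization $\widetilde B = B N S$ and the Schur-complement form of $N^{-1}$. The only subtlety is the appearance of the integer constants $d$ (and, in the ambient argument, $c\in\frac{1}{24}\BZ$), which must be extracted carefully but do not affect the ``rational part'' of any identity.
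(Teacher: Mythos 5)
Your proposal is correct and follows essentially the same route as the paper: both arguments reduce everything to the block factorization~\eqref{eq.BBt} of $\widetilde B$, the invertibility of the Schur block $R=\left(\begin{smallmatrix} Q_{11}-I & Q_{12}\\ Q_{12}^t & Q_{22}\end{smallmatrix}\right)$, the relation $BQ=A$, and (for the third identity) the flattening relation $Qf+f''=B^{-1}\nu$ together with the symmetry of $Q$. The only difference is organizational — the paper verifies the first two identities by multiplying their right-hand sides by $\widetilde B$ and checking the result equals $\widetilde A$ (resp.\ $\widetilde\nu-\widetilde B 1$), thereby never needing the explicit block inverse $N^{-1}=\left(\begin{smallmatrix} I & -(Q_{01}\,|\,Q_{02})R^{-1}\\ 0 & R^{-1}\end{smallmatrix}\right)$ that you write down, whereas you compute $\widetilde B^{-1}=SN^{-1}B^{-1}$ directly; the two are mechanically equivalent.
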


\begin{proof}
We will show that boths sides of Equation~\eqref{eq.ABtildeq} and
Equation~\eqref{eq.Bvtildeq} are equal after multiplying by the invertible
matrix $\widetilde{B}$. Denote
\be
\begin{pmatrix}
Q_{11}-I & Q_{12}\\
Q_{12}^{t} & Q_{22}
\end{pmatrix}^{-1}
\=
\begin{pmatrix}
\Gamma_{11} & \Gamma_{12}\\
\Gamma_{12}^{t} & \Gamma_{22}
\end{pmatrix}\,,
\ee
and note that
\be\label{eq.qlq}
\begin{tiny}
\begin{aligned}
&
\widetilde{B}
\begin{pmatrix}
-Q_{01} & -Q_{02}\\
I & 0 \\
0 & -I
\end{pmatrix}
\begin{pmatrix}
Q_{11}-I & Q_{12}\\
Q_{12}^{t} & Q_{22}
\end{pmatrix}^{-1}
\=
\widetilde{B}
\begin{pmatrix}
  -Q_{01}\Gamma_{11}-Q_{02}\Gamma_{12}^t
  &-Q_{01}\Gamma_{12}-Q_{02}\Gamma_{22}\\
  \Gamma_{11} & \Gamma_{12}\\
  -\Gamma_{12}^{t} &-\Gamma_{22}
\end{pmatrix}\\
&\=
(
-B_{0}Q_{01}\Gamma_{11}-B_0Q_{02}\Gamma_{12}^t+(A_1-B_1)\Gamma_{11}+A_2\Gamma_{12}^{t}
\;|\;-B_0Q_{01}\Gamma_{12}-B_0Q_{02}\Gamma_{22}+(A_1-B_1)\Gamma_{12}+A_2\Gamma_{22}
)
\\
&\=
(
(B_1(Q_{11}-I)+B_2Q_{12}^t)\Gamma_{11}+(B_1Q_{12}+B_2Q_{22})\Gamma_{12}^t
\;|\;(B_1(Q_{11}-I)+B_2Q_{12}^t)\Gamma_{12}+(B_1Q_{12}+B_2Q_{22})\Gamma_{22}
)
\\
&\=
(
B_1\;|\;B_2
)\,.
\end{aligned}
\end{tiny}
\ee
Therefore, since
\be
\widetilde{B}
\begin{pmatrix}
Q_{00} & 0 & 0\\
0 & 0 & 0 \\
0 & 0 & -I
\end{pmatrix}
\=(B_0Q_{00}\;|\;0\;|-A_2)\,,
\ee
we see that multiplying the right hand side of
Equations~\eqref{eq.ABtildeq} on the left by $\widetilde{B}$ we have
\be
  (B_0Q_{00}+B_1Q_{01}^t+B_{2}Q_{02}^t\;|-B_1\;|-A_2+B_2)
  \=\widetilde{A}\,,
\ee
which completes the proof of Equation~\eqref{eq.ABtildeq}.
Similarly, since
\be
\widetilde{B}
\begin{pmatrix}
I & 0 & 0\\
0 & 0 & 0\\
0 & 0 & 0
\end{pmatrix}(B^{-1}\nu-1)
-\widetilde{B}\begin{pmatrix}0\\ 1 \\ 0\end{pmatrix}
\=
B_0(B^{-1}\nu)^{(0)}-B_01
-A_11+B_11
\ee
we see that multiplying the right hand side of
Equations~\eqref{eq.Bvtildeq} on the left by $\widetilde{B}$ we have
\be
  B_0(B^{-1}\nu)^{(0)}-B_01
  -A_11+B_11
  +B_1(B^{-1}\nu)^{(1)}-B_11
  +B_2(B^{-1}\nu)^{(2)}
  \=\widetilde{\nu}-\widetilde{B}1\,,
\ee
which completes the proof of Equation~\eqref{eq.Bvtildeq}.
For Equation~\eqref{eq.fftildeq}, we will compute the three terms that appear there.
Firstly, use Equation~\eqref{eq.Bvtildeq} to obtain that
\be
\begin{pmatrix}
0 & I & 0\\
0 & 0 & -I
\end{pmatrix}
(\widetilde{B}^{-1}\widetilde{\nu}-1)
\=
-\begin{pmatrix}
1\\0
\end{pmatrix}
+
\begin{pmatrix}
Q_{11}-I & Q_{12}\\
Q_{12}^{t} & Q_{22}
\end{pmatrix}^{-1}\left(\begin{pmatrix}0 & I & 0\\0 & 0 & I \end{pmatrix}B^{-1}\nu
-\begin{pmatrix}1 \\ 0\end{pmatrix}\right),
\ee
to obtain that
\be\label{eq.lem.qinter}
\begin{aligned}
&\left((B^{-1}\nu)^t\begin{pmatrix}
0 & 0\\
I & 0\\
0 & I
\end{pmatrix}
-\begin{pmatrix}1 & 0\end{pmatrix}
\right)
\left(
\begin{pmatrix}
0 & I & 0\\
0 & 0 & -I
\end{pmatrix}
(\widetilde{B}^{-1}\widetilde{\nu}-1)+\begin{pmatrix}1 \\ 0\end{pmatrix}
\right)\\
&\=
({(B^{-1}\nu)^{(1)}}^t-1)
(\widetilde{B}^{-1}\widetilde{\nu})^{(1)}
-{(B^{-1}\nu)^{(2)}}^t
(\widetilde{B}^{-1}\widetilde{\nu})^{(2)}+{(B^{-1}\nu)^{(2)}}^t1\,.
\end{aligned}
\ee
Secondly, expanding out
\be
\begin{aligned}
\widetilde{f}^t\widetilde{B}^{-1}\widetilde{\nu}
&\=
f^{(0)}(\widetilde{B}^{-1}\widetilde{\nu})^{(0)}
+(1-f^{(1)}-f''^{(1)})(\widetilde{B}^{-1}\widetilde{\nu})^{(1)}
+f''^{(2)}(\widetilde{B}^{-1}\widetilde{\nu})^{(2)}\\
&\=
f^{(0)}(\widetilde{B}^{-1}\widetilde{\nu})^{(0)}\\
&\qquad+(1-f^{(1)}-{(B^{-1}\nu)^{(1)}}^t+f^{(0)}Q_{01}+f^{(1)}Q_{11}+f^{(2)}Q_{12}^t)
(\widetilde{B}^{-1}\widetilde{\nu})^{(1)}\\
&\qquad+({(B^{-1}\nu)^{(2)}}^t-f^{(0)}Q_{02}-f^{(1)}Q_{12}-f^{(2)}Q_{22})
(\widetilde{B}^{-1}\widetilde{\nu})^{(2)}\,.
\end{aligned}
\ee
Finally, using Equation~\eqref{eq.BBt} we obtain that
\be
\begin{aligned}
f^tB^{-1}\nu
-f^{(1)}1
+f''^{(2)}1
&\=
f^tB^{-1}\nu
-f^{(1)}1
-f^{t}Q(0,0,1)^t
+{(B^{-1}\nu)^{(2)}}^t1\\
&\=f^tB^{-1}\widetilde{\nu}+{(B^{-1}\nu)^{(2)}}^t1\\
&\=
f^{(0)}(\widetilde{B}^{-1}\widetilde{\nu})^{(0)}\\
&\qquad+(f^{(0)}Q_{01}+f^{(1)}(Q_{11}-I)+f^{(2)}Q_{12}^t)
(\widetilde{B}^{-1}\widetilde{\nu})^{(1)}\\
&\qquad-(f^{(0)}Q_{02}+f^{(1)}Q_{12}+f^{(2)}Q_{22})
(\widetilde{B}^{-1}\widetilde{\nu})^{(2)}
+{(B^{-1}\nu)^{(2)}}^t1\,.
\end{aligned}
\ee
\end{proof}


\section{Invariance under Pachner moves}
\label{sec.pachner}

In this section, we will prove that $\Phi^{\Xi}$ is invariant under 2--3 Pachner
moves. There are several versions of the 2--3 move (and of the corresponding
pentagon identity in Teichm\"uller TQFT~\cite{Kashaev:beta}) and the
one we choose in the next theorem is slightly different from the one
in~\cite[Sec.3.6]{DG} and can be related by
composing with quad moves.

The 2--3 move involves two triangulations $\calT$ and $\widetilde\calT$ with $N+2$
and $N+3$ tetrahedra, respectively, shown in Figure~\ref{fig.23move}.

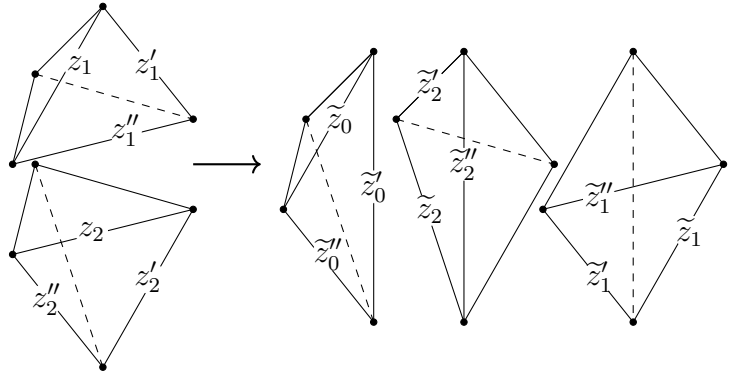
\begin{figure}[htpb!]
\begin{center}  
\begin{tikzpicture}[scale=0.6,baseline=-3]
\filldraw (2,3.5) circle (2pt);
\filldraw (0,0) circle (2pt);
\filldraw (4,1) circle (2pt);
\filldraw (0.5,2) circle (2pt);
\draw (0,0) -- (2,3.5);
\draw (0,0) -- (4,1);
\draw (0,0) -- (0.5,2);
\draw (4,1) -- (2,3.5);
\draw (0.5,2) -- (2,3.5);
\draw[dashed] (0.5,2) -- (4,1);

\filldraw (0,0-2) circle (2pt);
\filldraw (4,1-2) circle (2pt);
\filldraw (0.5,2-2) circle (2pt);
\filldraw (2,-2.5-2) circle (2pt);
\draw (0,0-2) -- (2,-2.5-2);
\draw (0,0-2) -- (4,1-2);
\draw (0,0-2) -- (0.5,2-2);
\draw (4,1-2) -- (2,-2.5-2);
\draw[dashed] (0.5,2-2) -- (2,-2.5-2);
\draw (0.5,2-2) -- (4,1-2);

\draw[thick,->] (4,0) -- (5.5,0);

\filldraw (6+2,3.5-1) circle (2pt);
\filldraw (6+0,0-1) circle (2pt);
\filldraw (6+0.5,2-1) circle (2pt);
\filldraw (6+2,-2.5-1) circle (2pt);
\draw (6+0,0-1) -- (6+2,3.5-1);
\draw (6+0,0-1) -- (6+0.5,2-1);
\draw (6+0.5,2-1) -- (6+2,3.5-1);
\draw (6+0.5,2-1) -- (6+2,3.5-1);
\draw (6+0,0-1) -- (6+2,-2.5-1);
\draw[dashed] (6+0.5,2-1) -- (6+2,-2.5-1);
\draw (6+2,3.5-1) -- (6+2,-2.5-1);

\filldraw (8+2,3.5-1) circle (2pt);
\filldraw (8+4,1-1) circle (2pt);
\filldraw (8+0.5,2-1) circle (2pt);
\filldraw (8+2,-2.5-1) circle (2pt);
\draw (8+4,1-1) -- (8+2,3.5-1);
\draw (8+0.5,2-1) -- (8+2,3.5-1);
\draw[dashed] (8+0.5,2-1) -- (8+4,1-1);
\draw (8+0.5,2-1) -- (8+2,3.5-1);
\draw (8+4,1-1) -- (8+2,-2.5-1);
\draw (8+0.5,2-1) -- (8+2,-2.5-1);
\draw (8+2,3.5-1) -- (8+2,-2.5-1);

\filldraw (11.75+2,3.5-1) circle (2pt);
\filldraw (11.75+0,0-1) circle (2pt);
\filldraw (11.75+4,1-1) circle (2pt);
\filldraw (11.75+2,-2.5-1) circle (2pt);
\draw (11.75+0,0-1) -- (11.75+2,3.5-1);
\draw (11.75+0,0-1) -- (11.75+4,1-1);
\draw (11.75+4,1-1) -- (11.75+2,3.5-1);
\draw (11.75+0,0-1) -- (11.75+2,-2.5-1);
\draw (11.75+4,1-1) -- (11.75+2,-2.5-1);
\draw[dashed] (11.75+2,3.5-1) -- (11.75+2,-2.5-1);

\filldraw[white] (1.5,2.2) circle (10pt);
\filldraw[white] (3,2.2) circle (10pt);
\filldraw[white] (2.5,0.75) circle (10pt);
\filldraw[white] (1.75,-1.5) circle (10pt);
\filldraw[white] (3,-2.5) circle (10pt);
\filldraw[white] (0.75,-3) circle (10pt);
\filldraw[white] (8,-0.5) circle (10pt);
\filldraw[white] (7.25,1) circle (10pt);
\filldraw[white] (7,-2) circle (10pt);
\filldraw[white] (15,-1.5) circle (10pt);
\filldraw[white] (13,-2.5) circle (10pt);
\filldraw[white] (13,-0.75) circle (10pt);
\filldraw[white] (9.25,-1) circle (10pt);
\filldraw[white] (10,0) circle (10pt);
\filldraw[white] (9.25,1.75) circle (10pt);

\node at (1.5,2.2) {$z_1$};
\node at (3,2.2) {$z_1'$};
\node at (2.5,0.75) {$z_1''$};
\node at (1.75,-1.5) {$z_2$};
\node at (3,-2.5) {$z_2'$};
\node at (0.75,-3) {$z_2''$};
\node at (7.25,1) {$\widetilde{z}_0$};
\node at (8,-0.5) {$\widetilde{z}_0'$};
\node at (7,-2) {$\widetilde{z}_0''$};
\node at (15,-1.5) {$\widetilde{z}_1$};
\node at (13,-2.5) {$\widetilde{z}_1'$};
\node at (13,-0.75) {$\widetilde{z}_1''$};
\node at (9.25,-1) {$\widetilde{z}_2$};
\node at (10,0) {$\widetilde{z}_2''$};
\node at (9.25,1.75) {$\widetilde{z}_2'$};

\end{tikzpicture}
\caption{The 2--3 Pachner move.}
\label{fig.23move}
\end{center}
\end{figure}

\noindent
Using $z_0=z_1+z_2-z_1z_2$ from Equation~\eqref{5z} used in
Theorem~\ref{thm.pentagon}, it follows that the shapes $z$ of $\calT$ and
$\widetilde z$ of $\widetilde\calT$ are related by
\bea
\label{z23}
z\=(z_1,z_2,z^{*}) \mapsto \widetilde z
\= &\big(
\widetilde{z}_0,\; \widetilde{z}_1,\; \widetilde{z}_2,\;\widetilde{z}^{*}
\big)\\
\=&\big(
z_0,\; z_1z_0^{-1},\; z_2z_0^{-1},\;z^{*}
\big)\,.
\eea
Similarly to~\cite[Eq. (3.20) and (3.21)]{DG}, these shapes satisfy the relations
\bea
\label{eq.23sr}
\widetilde{z}_0'&\=z_1'z_2'\,,\quad
& \widetilde{z}_1''&\=z_1''z_2\,,\quad
& \widetilde{z}_2''&\=z_1z_2''\,,\\
z_1&\=\widetilde{z}_0\widetilde{z}_1\,,\quad
& z_1'&\=\widetilde{z}_1'\widetilde{z}_2\,,\quad
& z_1''&\=\widetilde{z}_0''\widetilde{z}_2'\,,\\
z_2&\=\widetilde{z}_0\widetilde{z}_2\,,\quad
& z_2'&\=\widetilde{z}_1\widetilde{z}_2'\,,\quad
& z_2''&\=\widetilde{z}_0''\widetilde{z}_1'\,.
\eea
If we write the Neumann--Zagier matrices of $\calT$ in the form
\be
\label{AB2}
A \= (a_1\,|\,a_2\,|\,a_*)\,,\qquad B=\; (b_1\,|\,b_2\,|\,b_*)\,,
\ee
where $a_1$, $a_2$ are the first two columns of $A$ and $a_*$ is the block
$(N+2)\times N$ matrix of the remaining $N$ columns of $A$, and likewise for $B$,
then using Equations~\eqref{eq.23sr}
the corresponding Neumann--Zagier matrices of $\widetilde\calT$ are given by
\be
\label{AB3}
\widetilde A \= \begin{pmatrix}
 -1 & 0 & 0 & 0 \\
 -b_1-b_2+a_1+a_2 & a_1-b_2 & a_2-b_1 & a_*
 \end{pmatrix}, \qquad 
\widetilde B \= \begin{pmatrix}
 -1 & 1 & 1 & 0\\
 0 & b_1 & b_2 & b_*
\end{pmatrix}
\ee
and the corresponding vector $\widetilde\nu=(1,\nu)$. Analogously to the shapes,
we will fix flattenings
$(\widetilde{f}_0,\widetilde{f}_1,\widetilde{f}_2,\widetilde{f}^{*})$ and
$(\widetilde{f}_0'',\widetilde{f}_1'',\widetilde{f}_2'',\widetilde{f}''^{*})$ for
$(\widetilde{A}\,|\,\widetilde{B})$ and
$(f_1,f_2,f^{*})=(\widetilde{f}_0+\widetilde{f}_1,\widetilde{f}_0
+\widetilde{f}_2,f^{*})$ and $(f''_1,f''_2,f''^{*})=
(\widetilde{f}_0''+\widetilde{f}_2', \widetilde{f}_0''+\widetilde{f}_1',f''^{*})$
for $(A\,|\,B)$. The data of the flattenings then satisfy the additive versions
of Equations~\eqref{eq.23sr}.

\begin{theorem}
\label{thm.23}
The series $\Phi^{\Xi}(\hbar)$ is invariant under 2--3 Pachner moves.
\end{theorem}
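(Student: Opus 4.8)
The plan is to run the same argument as in the proof of Theorem~\ref{thm.quadinvariance}, with the pentagon identity of Theorem~\ref{thm.pentagon} taking the place of the Fourier identity of Theorem~\ref{thm.fourier}. Throughout I assume, as always, that both $\Xi$ and $\widetilde\Xi$ are nondegenerate NZ data; note that the block shape \eqref{AB3} of $\widetilde B$ gives $\det\widetilde B=\pm\det B$ for free, and that $\delta$ of \eqref{eq.delta23} is a nonzero element of $\BQ(z_1,z_2)$, hence a legitimate quadratic form. Order the tetrahedra so the two destroyed by the move come first and write $\Phi^{\Xi}(\hbar)=\langle J_0\rangle_{x,\Lambda_0}$ with $x=(x_1,x_2,x^{*})$, $\Lambda_0=\diag(z')-B^{-1}A$, and $J_0=\exp\!\big(\tfrac{\hbar}{8}f^{t}B^{-1}Af-\tfrac{\hsqrt}{2}x^{t}(B^{-1}\nu-1)\big)\prod_{j}\psi_{\hbar}(x_j,z_j)$.

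First I would apply \eqref{eq.pentagon} to the pair $\psi_{\hbar}(x_1,z_1)\psi_{\hbar}(x_2,z_2)$, which replaces it by $e^{-\hbar/24}$ times a one-variable formal Gaussian integral (in a new variable $w$, with quadratic form $\delta$) of a product of three factors $\psi_{\hbar}$ at the shapes $z_1z_0^{-1}=\widetilde z_1$, $z_0=\widetilde z_0$, $z_2z_0^{-1}=\widetilde z_2$. Since the surviving factors of $J_0$ (the exponential prefactor and $\prod_{j}\psi_{\hbar}(x^{*}_j,z^{*}_j)$) do not contain $w$, Fubini's theorem for formal Gaussian integration (cf. the proof of Lemma~\ref{lem.ff1}(c)) gives $\Phi^{\Xi}(\hbar)=e^{-\hbar/24}\langle J_1\rangle_{(x_1,x_2,x^{*},w),\,\Lambda_0\oplus(\delta)}$, where $J_1$ is $J_0$ with $\psi_{\hbar}(x_1,z_1)\psi_{\hbar}(x_2,z_2)$ replaced by that triple product. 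Next I would pass to the variables $(\widetilde x_0,\widetilde x_1,\widetilde x_2)$ defined to be exactly the three $\psi_{\hbar}$-arguments produced by the pentagon, namely $\widetilde x_0=w+x_1+x_2-(x_1z_2+x_2z_1)/z_0$, $\widetilde x_1=-w-x_2+(x_1z_2+x_2z_1)/z_0$, $\widetilde x_2=-w-x_1+(x_1z_2+x_2z_1)/z_0$, together with $\widetilde x^{*}=x^{*}$. This is an invertible linear change of variables (in particular $\widetilde x_0+\widetilde x_1=x_1$ and $\widetilde x_0+\widetilde x_2=x_2$, matching the additive flattening relations $f_1=\widetilde f_0+\widetilde f_1$, $f_2=\widetilde f_0+\widetilde f_2$), so by Lemma~\ref{lem.ff1}(a) it transports the integral to $\Phi^{\Xi}(\hbar)=e^{-\hbar/24}\langle J_2\rangle_{\widetilde x,\Lambda_2}$, where $\widetilde x=(\widetilde x_0,\widetilde x_1,\widetilde x_2,\widetilde x^{*})$, the $\psi_{\hbar}$-factors of $J_2$ are now precisely $\prod_j\psi_{\hbar}(\widetilde x_j,\widetilde z_j)$ after a harmless reordering (Section~\ref{sec.elementary}), and $\Lambda_2=(R^{t})^{-1}\big(\Lambda_0\oplus(\delta)\big)R^{-1}$ for the change-of-variables matrix $R$.

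It then remains to recognize $e^{-\hbar/24}J_2$ as the integrand $f^{\widetilde\Xi}_{\hbar}(\widetilde x,\widetilde z)$ up to an overall factor $e^{c\hbar}$, and $\Lambda_2$ as $\widetilde\Lambda=\diag(\widetilde z')-\widetilde B^{-1}\widetilde A$. For the exponential prefactor the linear-in-$x$ term forces the vector identity $\widetilde B^{-1}\widetilde\nu=\big((B^{-1}\nu)_1+(B^{-1}\nu)_2-1,\ (B^{-1}\nu)_1,\ (B^{-1}\nu)_2,\ (B^{-1}\nu)^{*}\big)$, which is immediate from the explicit $\widetilde B$, $\widetilde\nu$ of \eqref{AB3} after left-multiplying by $\widetilde B$; and the term of order $\hbar$ reduces, using $\widetilde B^{-1}\widetilde A\widetilde f=\widetilde B^{-1}\widetilde\nu-\widetilde f''$, to a scalar identity between $\widetilde f^{t}\widetilde B^{-1}\widetilde A\widetilde f$ and $f^{t}B^{-1}Af$ that holds modulo a rational constant once the flattening relations for $(f,f'')$ and $(\widetilde f,\widetilde f'')$ are inserted. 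Combined with the $e^{-\hbar/24}$, this produces $\Phi^{\Xi}(\hbar)=e^{c\hbar}\Phi^{\widetilde\Xi}(\hbar)$ with $c\in\tfrac1{24}\BZ$, which is exactly the flattening-change discrepancy tabulated in Section~\ref{sec.elementary}. The genuinely substantive statement is the matrix identity $\Lambda_2=\widetilde\Lambda$; I would isolate it (and the auxiliary formula for $\widetilde B^{-1}\widetilde A$ in terms of $B^{-1}A$) in a lemma in the spirit of Lemma~\ref{lem.quad.inv}, proved by clearing $\widetilde B$ and reducing to bilinear relations among the columns $a_1,a_2,a_{*},b_1,b_2,b_{*}$ of $A$ and $B$, using the symplectic symmetry $AB^{t}=BA^{t}$, the formulas \eqref{AB3}, and the shape relations \eqref{eq.23sr}; the crucial numerical input is the rewriting $\delta=\widetilde z_0'+\widetilde z_1'+\widetilde z_2'-1$ of \eqref{eq.delta23}, which is how $\delta$ fits together with the diagonal shape entries of $\widetilde\Lambda$. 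As with Theorems~\ref{thm.fourier} and~\ref{thm.pentagon}, I would relegate the unenlightening bookkeeping to an appendix.

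I expect the matrix identity $\Lambda_2=\widetilde\Lambda$ to be the one real obstacle. Unlike the quad case there is no Fubini/marginalization to carry out — every Gaussian variable survives as the argument of some $\psi_{\hbar}$ — but in compensation one must check three compatible block identities through $R$: the $3\times3$ block of $\widetilde\Lambda$ on $(\widetilde x_0,\widetilde x_1,\widetilde x_2)$ must pull back to $\diag(z_1',z_2')$ on $(x_1,x_2)$ in direct sum with $\delta$ on $w$; the $3\times N$ coupling block of $\widetilde\Lambda$ with the $\widetilde x^{*}$ must pull back to the original coupling of $(x_1,x_2)$ with $x^{*}$ (with no $w$-coupling arising); and the $N\times N$ block of $\widetilde\Lambda$ on the $\widetilde x^{*}$-indices, equivalently the $(*,*)$-block of $\widetilde B^{-1}\widetilde A$, must agree with that of $B^{-1}A$. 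Each unwinds to a bilinear identity in the columns of $A$, $B$ after clearing $\widetilde B$; I expect the $3\times3$ block to be the delicate one, since there the denominators $z_0$, $1-z_1$, $1-z_2$ coming from both $R$ and $\delta$ must cancel using $z_0=z_1+z_2-z_1z_2$ and the symmetry of $B^{-1}A$.
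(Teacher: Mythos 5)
Your outline correctly reproduces the paper's argument \emph{in the case where the matrix $B$ attached to the quad forced by the $2$--$3$ move is invertible}: apply the pentagon identity of Theorem~\ref{thm.pentagon} to the two fused tetrahedra, make the linear change of variables so that the three new $\psi_\hbar$-arguments become the new Gaussian variables, and then verify the matrix and vector identities ($\widetilde B^{-1}\widetilde A$ in terms of the blocks of $B^{-1}A$, $\widetilde B^{-1}\widetilde\nu$, and the flattening term) in a lemma proved by clearing $\widetilde B$. That is precisely the paper's Proposition~\ref{prop.case0} together with Lemma~\ref{lem.23.c0}, and your observation $\det\widetilde B=-\det B$ (expand along the first column of \eqref{AB3}) is indeed how one sees that nondegeneracy transfers across the move for \emph{this} quad.

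The genuine gap is your blanket assumption that this quad is nondegenerate. The pentagon identity pins the quads on the two tetrahedra being destroyed (and the three created): you are only free to adjust quads on the $N$ spectator tetrahedra. Consequently the matrix $B$ appearing in the $2$--$3$--compatible presentation can have nullity $1$ or $2$ (it shares its last $N$ columns with an invertible $\mathbf B$, but its first two columns are forced), and in that case the formal Gaussian integral you start from simply does not exist, since $\Lambda$ requires $B^{-1}$. Nor can you invoke Theorem~\ref{thm.quadinvariance} to reduce to this situation: quad invariance only compares two \emph{nondegenerate} quads, and the problematic one is the degenerate one. Your remark that $\det\widetilde B=\pm\det B$ only transports nondegeneracy from one side of the move to the other; it does not produce it. The paper resolves this by performing the quad change at the level of integrands even when the intermediate $B$ is singular: one first applies the Fourier identities (Theorem~\ref{thm.fourier}, Corollary~\ref{cor.fourier}) to the degenerate directions, then the pentagon, then inverse Fourier transforms, and finally Fubini to remove the extra variables, as in the scheme \eqref{2Phi}; this is the content of Propositions~\ref{prop.case1} and~\ref{prop.case2} with Lemmas~\ref{lem.23.c1} and~\ref{lem.23.c2} (which also use \cite[Lem.A.3]{DG} to know which column replacements restore full rank), and it constitutes most of the proof. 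Relatedly, your closing claim that ``there is no Fubini/marginalization to carry out'' is true only in the full-rank case; in the nullity $1$ and $2$ cases two, respectively four, Gaussian variables must be integrated out.
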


The proof involves an application of the pentagon identity for $\psi_{\hbar}$
of Theorem~\ref{thm.pentagon}. 

For two triangulations $\calT$ and $\widetilde\calT$ with $N+2$ and $N+3$
tetrahedra and NZ matrices $(\calA \,|\,\calB)$, respectively, related by a Pachner
2--3 move. To define the corresponding series $\Phi^{\Xi}(\hbar)$ and
$\Phi^{\widetilde\Xi}(\hbar)$, we need to possibly change quads on $\calT$ and
$\widetilde\calT$ so that both $\calB$ and $\widetilde\calB$ are invertible. By a
quad move $q$, we can replace $(\calA \,|\,\calB)$ by $(\mathbf{A} \,|\,\mathbf{B})$
where $\det(\mathbf{B}) \neq 0$. Recalling that quad moves act on tetrahedra and
actions on different tetrahedra commute, we can write the move $q=q_2 \times q_N$
as a product of quad moves $q_2$ on the first 2 tetrahedra times moves $q_N$ on
the remaining $N$ tetrahedra of $\calT$. Let $(A \,|\,B)$ denote the result of
applying the move $1 \times q_N$ on the $N+2$ tetrahedra of $\calT$. 

Since the $(N+2)\times(N+2)$ matrix $\mathbf{B}$ has full rank and $B$ and
$\mathbf{B}$ have the same last $N$ columns, it follows that $B$ has nullity
$0$, $1$ or $2$. 

Now on $\widetilde\calT$, we can apply the identity move on the first three
tetrahedra and the $q_N$ moves on the remaining $N$ tetrahedra, which transforms
the NZ matrices $(\widetilde{\calA} \,|\, \widetilde{\calB})$
to $(\widetilde A \,|\,\widetilde B)$, and further apply the $q_2$ moves on
the second and third tetrahedra to obtain the NZ matrices
$(\widetilde{\mathbf{A}} \,|\,\widetilde{\mathbf{B}})$. By looking at how the
matrix $B$ transforms under a 2--3 move (see Equation~\eqref{AB3}), it follows
that $\widetilde{\mathbf{B}}$ has the same rank as $\mathbf{B}$, and hence
$\det(\widetilde{\mathbf{B}}) \neq 0$.

The above discussion can be summarized in the following commutative diagram.

\be
\begin{tikzcd}
(\mathcal{A}\;|\;\mathcal{B})
\arrow[rrr,"2 \to 3"]\arrow[rd,"1 \times q_N"]\arrow[dd,"q_2 \times q_N"] &  &
& (\widetilde{\mathcal{A}}\;|\;\widetilde{\mathcal{B}})
\arrow[dl,"\!\!\!\!1 \times 1 \times q_N"]\arrow[dd,"1 \times q_2 \times q_N"]\\
& (A\;|\;B)\arrow[r,"2 \to 3"]\arrow[dl,"q_2 \times 1"]
& (\widetilde{A}\;|\;\widetilde{B})\arrow[dr,"1 \times q_2 \times 1"] & \\
(\mathbf{A}\;|\;\mathbf{B}) &   &
& (\widetilde{\mathbf{A}}\;|\;\widetilde{\mathbf{B}})
\end{tikzcd}
\ee
where $\det(\mathbf{B}) \neq 0$ and $\det(\widetilde{\mathbf{B}}) \neq 0$.
Now $\Phi^\Xi(\hbar)$ and $\Phi^{\widetilde\Xi}(\hbar)$ can be defined using 
the nondegenerate NZ data with matrices $(\mathbf{A} \,|\,\mathbf{B})$ and 
$(\widetilde{\mathbf{A}} \,|\,\widetilde{\mathbf{B}})$, respectively. 
We will show that the two $\hbar$-series are equal using a diagram
\be
\label{2Phi}
\begin{tikzcd}
\Phi^\Xi(\hbar)\arrow[rr,"\otimes^i \text{Fourier}"] & & \bullet
\arrow[rr,"\text{Pentagon}"] & & \bullet
\arrow[rr,"\otimes^i \text{Fourier}^{-1}"] & & \bullet
\arrow[rr,"2i-\text{Fubini}"] & & \Phi^{\widetilde\Xi}(\hbar)
\end{tikzcd}
\ee
where $i=0,1,2$ denotes the nullity of $B$. We will treat each case in a separate
section. With this discussion the Theorem~\ref{thm.23} follows from
Propositions~\ref{prop.case0},~\ref{prop.case1}, and~\ref{prop.case2}.
Before proceeding with the proof, we will set some
notation. Similarly to the proof of the quad invariance, denote
$\mbfQ=\mathbf{B}^{-1}\mathbf{A}$
\bea
\label{eq:BinvA_def}
\mathbf Q \=
\def\arraystretch{1.15}
\left(\!\!
\begin{array}{cc|c}
\mathbf Q_{11} & \mathbf Q_{12} & \mathbf Q_1^*\\
\mathbf Q_{12} & \mathbf Q_{22} & \mathbf Q_2^*\\
\hline
\mathbf {Q_1^*}^t & \mathbf {Q_2^*}^t & \mathbf Q^*\\
\end{array}\!\!\right),
\eea
where $\mbfQ_{ij}$ are matrices of size $1\times 1$, $\mbfQ_{j}^{*}$
are matrices of size $1\times N$ and $\mbfQ^{*}$ is a matrix of size
$N\times N$.

\subsection{\texorpdfstring{The case of $B$ with full rank}{The case of B with full rank}}
\label{sub.case0}

In this section, we prove Theorem~\ref{thm.pentagon} under the assumption that
the matrix $B$ has full rank. In this case, Equation~\eqref{2Phi} simplifies
to the following one
\be
\label{2Phi.0}
\begin{tikzcd}
\Phi^\Xi(\hbar)\arrow[rr,"\text{Pentagon}"] & & \Phi^{\widetilde\Xi}(\hbar)
\end{tikzcd}
\ee
since we do not need to apply Fourier transform, nor Fubini's theorem.

\begin{proposition}
\label{prop.case0}
If $B$ has full rank, then $\Phi^{\Xi}(\hbar)$ is invariant under the 2--3
Pachner move given in Equation~\eqref{AB3}.
\end{proposition}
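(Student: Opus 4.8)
The plan is to run the argument through the pentagon identity for $\psi_\hbar$ in exact parallel with the way Theorem~\ref{thm.quadinvariance} runs through the Fourier identity; in the full-rank case no auxiliary Fourier transforms are needed (diagram~\eqref{2Phi.0}), so the whole computation is a single application of the pentagon. Write $\Phi^\Xi(\hbar)=\langle I_0\rangle_{x,\Lambda}$ with
\[
I_0=\exp\Big(\tfrac{\hbar}{8}f^tB^{-1}Af-\tfrac{\hroot}{2}x^t(B^{-1}\nu-1)\Big)\prod_{j=1}^{N+2}\psi_\hbar(x_j,z_j),\qquad \Lambda=\diag(z')-B^{-1}A,
\]
where $x=(x_1,x_2,x^*)$ and the first two $\psi_\hbar$-factors belong to the two tetrahedra of $\calT$ (shapes $z_1,z_2$) that are removed by the $2\to3$ move. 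First I would apply Theorem~\ref{thm.pentagon} to the product $\psi_\hbar(x_1,z_1)\psi_\hbar(x_2,z_2)$: this contributes a scalar $e^{-\hbar/24}$, introduces one new formal Gaussian variable $w$ with quadratic form $\delta$ of \eqref{eq.delta23}, and replaces those two factors by a product of three factors with shapes $z_1z_0^{-1},z_0,z_2z_0^{-1}$, that is, precisely $\widetilde z_1,\widetilde z_0,\widetilde z_2$ of \eqref{z23}, whose arguments are affine-linear in $(x_1,x_2,w)$.

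Since $w$ does not occur in $\Lambda$, the resulting nested brackets are the single bracket over the $N+3$ variables $(x_1,x_2,x^*,w)$ with respect to the block-diagonal form $\Lambda\oplus(\delta)$, by iterated formal Gaussian integration~\cite{AarhusII}. Then I would perform the linear change of variables $(x_1,x_2,w)\mapsto(\widetilde x_0,\widetilde x_1,\widetilde x_2)$, $x^*\mapsto\widetilde x^*$, designed so that the three new $\psi_\hbar$-factors become exactly $\psi_\hbar(\widetilde x_0,\widetilde z_0)\psi_\hbar(\widetilde x_1,\widetilde z_1)\psi_\hbar(\widetilde x_2,\widetilde z_2)$; concretely this forces $x_1=\widetilde x_0+\widetilde x_1$ and $x_2=\widetilde x_0+\widetilde x_2$ (the additive analogues of the shape relations $z_1=\widetilde z_0\widetilde z_1$, $z_2=\widetilde z_0\widetilde z_2$ of \eqref{eq.23sr}), with $w$ recovered from $\widetilde x_0,\widetilde x_1,\widetilde x_2$, and one checks the corresponding matrix $P$ is invertible over $\BQ(z_1,z_2)$. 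By Lemma~\ref{lem.ff1}(a) this rewrites $\Phi^\Xi(\hbar)$ as $e^{-\hbar/24}$ times the bracket of $\prod_{j}\psi_\hbar(\widetilde x_j,\widetilde z_j)$ (the remaining $N$ factors untouched) against the transformed exponential prefactor, with respect to the quadratic form $P^t(\Lambda\oplus(\delta))P$.

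What then remains is to identify the transformed data with that of $\widetilde\Xi$, which I would isolate into a lemma in the spirit of Lemma~\ref{lem.quad.inv}: \emph{(i)} $P^t(\Lambda\oplus(\delta))P=\diag(\widetilde z')-\widetilde B^{-1}\widetilde A$, the defining form of $\Phi^{\widetilde\Xi}(\hbar)$; and \emph{(ii)} the transformed prefactor equals $\exp\big(\tfrac{\hbar}{8}\widetilde f^t\widetilde B^{-1}\widetilde A\widetilde f-\tfrac{\hroot}{2}\widetilde x^t(\widetilde B^{-1}\widetilde\nu-1)\big)$ up to a factor $e^{c'\hbar}$ with $c'\in\tfrac18\BZ$. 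Both are combinatorial identities following from the description \eqref{AB3} of $(\widetilde A\,|\,\widetilde B)$ and $\widetilde\nu=(1,\nu)$ in terms of $(A\,|\,B)$ and $\nu$, from $\det\widetilde B=-\det B$ (so the full-rank hypothesis is exactly what keeps $\widetilde B$ invertible and $\Phi^{\widetilde\Xi}$ defined), from the shape relations \eqref{eq.23sr} together with $\Li_0(w)=w/(1-w)$ and the closed form of $\delta$ in \eqref{eq.delta23}, from the symplectic identity $AB^t=BA^t$, and, for \emph{(ii)}, from the additive flattening relations recorded after \eqref{AB3} and the relation $Af+Bf''=\nu$ of \eqref{2f}. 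Combining \emph{(i)} and \emph{(ii)} gives $\Phi^\Xi(\hbar)=e^{c\hbar}\Phi^{\widetilde\Xi}(\hbar)$ with $c\in\tfrac1{24}\BZ$, which is the assertion, since $\Phi$ depends on the flattening only through such a scalar (Section~\ref{sec.elementary}).

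The hard part will be step \emph{(i)}: $\widetilde B^{-1}\widetilde A$ is not assembled from blocks of $B^{-1}A$, so the contribution of $\delta$ and the diagonal entries $\diag(z')$ must conspire, via \eqref{eq.23sr} and the symplectic property, to reproduce $\diag(\widetilde z')-\widetilde B^{-1}\widetilde A$ on the nose. The cleanest route I see — mirroring the device in the proof of Lemma~\ref{lem.quad.inv} — is to avoid inverting $\widetilde B$ and instead verify the equivalent identity $\widetilde B\,P^t(\Lambda\oplus(\delta))P=\widetilde B\diag(\widetilde z')-\widetilde A$ obtained by left-multiplying by the invertible matrix $\widetilde B$, which turns the claim into a polynomial identity in $z_1,z_2$ and the remaining shapes that can be checked column-block by column-block from \eqref{AB3} and \eqref{eq.23sr}.
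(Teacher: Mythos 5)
Your proposal is correct and follows essentially the same route as the paper's proof: a single application of the pentagon identity introducing one new Gaussian variable, followed by a linear change of variables realizing the additive shape relations $x_1=\widetilde x_0+\widetilde x_1$, $x_2=\widetilde x_0+\widetilde x_2$ (the paper does this in two successive substitutions rather than one), and then a block-matrix lemma (the paper's Lemma~\ref{lem.23.c0}) identifying the transformed quadratic form and prefactor with those of $\widetilde\Xi$, verified exactly as you suggest by left-multiplying by the invertible matrix $\widetilde B$. The only cosmetic difference is that you package the two coordinate changes into one matrix $P$ and state the final identification slightly more abstractly.
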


\begin{proof}
To begin with, noting that in this case $A=\mathbf{A}$ and $B=\mathbf{B}$, we have:
\be
\Phi^{\Xi}(\hbar) \= \langle I_0 \rangle_{x,\Lambda}
\ee
where
\[
I_0 \= \exp\Big(\frac\hbar8 f^t B^{-1} A f
-\frac{\hsqrt}2 x^t(B^{-1}\nu-1)\Big)
\prod_{j=1}^{N+2} \psi_{\hbar}(x_j,z_j)
\]
and
\[
\Lambda_0\=\diag(z')-\mbfQ 
\]
and $x=(x_1,x_2,x^*)$ are the integration variables. Applying the pentagon
identity of Theorem~\ref{thm.pentagon}, to the $\psi_{\hbar}$ with arguments
$x_1,x_2$ and introducing a new integration variable $x_0$, we obtain that
\be
\Phi^{\Xi}(\hbar) \= \langle I_1 \rangle_{(x_0,x),\Lambda_1}
\ee
where
\[
\begin{small}
\begin{aligned}
I_1 &\= \exp\Big(\!-\frac{\hbar}{24}+\frac\hbar8 f^t B^{-1} A f
-\frac{\hsqrt}2 x^t(B^{-1}\nu-1)\Big)\,
\psi_\hbar\Big(-x_0 -x_2
+\frac{x_1z_2+x_2z_1}{z_0},
z_1z_0^{-1}\Big)\\
&\!\!\times
\psi_\hbar\Big(x_0+x_1+x_2
-\frac{x_1z_2+x_2z_1}{z_0},
z_0\Big)
\,\psi_\hbar\Big(-x_0-x_1
+\frac{x_1z_2+x_2z_1}{z_0},
z_2z_0^{-1}\Big)
\prod_{j=1}^N \psi_{\hbar}(x_j^{*},z_j^{*})
\end{aligned}
\end{small}
\]
and 
\[
\Lambda_1 \= 
\begin{pmatrix}
 \frac{({z_1}+{z_2}-z_1z_2)^2}{({z_1}-1) {z_1} ({z_2}-1){z_2}}&
 0 \\
 0 & \Lambda
\end{pmatrix} \,.
\]
Making a change of variables $x_0\mapsto x_0+x_1(-1+z_2z_0^{-1})+x_2(-1+z_1z_0^{-1})$
using Lemma~\ref{lem.ff1}, we obtain that
\bea
\Phi^{\Xi}(\hbar) \= \langle I_2 \rangle_{(x_0,x),\Lambda_2},
\eea
where
\[
\begin{aligned}
I_2 &\= 
\exp\Big(-\frac{\hbar}{24}+\frac\hbar8 f^t B^{-1} A f
-\frac{\hsqrt}2 x^t(B^{-1}\nu-1)\Big)\,
\psi_\hbar(-x_0 +x_1,z_1z_0^{-1})\\
&\qquad\times\psi_\hbar(x_0,z_0)\,
\psi_\hbar(-x_0+x_2,z_2z_0^{-1})\,
\prod_{j=1}^N \psi_{\hbar}(x_j^{*},z_j^{*})
\end{aligned}
\]
and 
\[
\def\arraystretch{1.5}
\Lambda_2 \= 
\left(\!\!
\begin{array}{ccc|c}
 \frac{({z_1}+{z_2}-z_1z_2)^2}{({z_1}-1) {z_1} ({z_2}-1){z_2}}&
 \frac{{z_1+z_2-z_1z_2}}{z_2({z_1}-1)} &
 \frac{{z_1}+{z_2}-z_1z_2}{{z_1} ({z_2}-1)}&
 0 \\
\frac{{z_1+z_2-z_1z_2}}{z_2({z_1}-1)}&
-{\mbfQ_{11}}{-}\frac{{z_1+z_2-z_1z_2}}{z_2({z_1}-1)} &
 1-{\mbfQ_{12}} &
 -{\mbfQ_{1}^*} \\
 \frac{{z_1}+{z_2}-z_1z_2}{{z_1} ({z_2}-1)}&
 1-{\mbfQ_{12}} &
 -\mbfQ_{22}{-}\frac{{z_1+z_2-z_1z_2}}{z_1({z_2}-1)} &
 -{\mbfQ_{2}^*} \\
 \hline
 0 &
 -{\mbfQ_{1}^*}^t &
 -{\mbfQ_{2}^*}^t &
 {\diag({z^*}')}-{\mbfQ^*} \\
\end{array}
\!\!\right).
\]
Making a change of variables $x_1\mapsto x_1+x_0$ and $x_2\mapsto x_2+x_0$
using Lemma~\ref{lem.ff1}, and denoting $\widetilde{x}=(x_0,x)$,
$\widetilde{z}=(z_0,z_1z_0^{-1},z_2z_0^{-1},z^{*})$ we obtain that
\bea
\Phi^{\Xi}(\hbar) \= e^{c\hbar}\langle I_3 \rangle_{\widetilde{x},\Lambda_3},
\eea
where
\[
I_3 \= 
\exp\Big(\frac\hbar8 \widetilde{f}^t \widetilde{B}^{-1} \widetilde{A} \widetilde{f}
-\frac{\hsqrt}2 \widetilde{x}^t(\widetilde{B}^{-1}\widetilde{\nu}-1)\Big)
\prod_{j=1}^{N+3} \psi_{\hbar}(\widetilde{x}_j,\widetilde{z}_j)
\]
and
\[
\Lambda_3
\=\diag(\tilde z') - \tilde B^{-1}\tilde A\,.
\]
\end{proof}

The next lemma identifies vectors and matrices of the two triangulations
$\calT$ and $\widetilde\calT$.

\begin{lemma}
\label{lem.23.c0}
With the notation used in the proof of the previous Proposition~\ref{prop.case0},
\begin{align}\label{eq.c1bia}
\widetilde B^{-1}\widetilde A &\= \left(\!\!
\def\arraystretch{1.25}
\begin{array}{ccc|c}
 {\mbfQ_{11}}+2 {\mbfQ_{12}}+{\mbfQ_{22}}-1 &
 {\mbfQ_{11}}+{\mbfQ_{12}}-1 & {\mbfQ_{12}}+{\mbfQ_{22}}-1 &
 {\mbfQ_{1}^*}+{\mbfQ_{2}^*} \\
 {\mbfQ_{11}}+{\mbfQ_{12}}-1 & {\mbfQ_{11}} & {\mbfQ_{12}}-1 & {\mbfQ_{1}^*} \\
 {\mbfQ_{12}}+{\mbfQ_{22}}-1 & {\mbfQ_{12}}-1 &
 {\mbfQ_{22}} & {\mbfQ_{2}^*} \\
 \hline
 {\mbfQ_{1}^*}^t+{\mbfQ_{2}^*}^t &
 {\mbfQ_{1}^*}^t & {\mbfQ_{2}^*}^t & {\mbfQ^*} \\
\end{array}
\!\!\right)\\
\widetilde B^{-1}\widetilde{\nu}
&\=
\begin{pmatrix}
(B^{-1}\nu)_1+(B^{-1}\nu)_2-1\\
B^{-1}\nu
\end{pmatrix}\label{eq.c1biv}\\
\widetilde{f}^{t}\widetilde{B}^{-1}\widetilde{\nu}
&\=f^{t}B^{-1}\nu-\widetilde{f}_{0}\label{eq.c1ff}
\end{align}
\end{lemma}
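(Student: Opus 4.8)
The plan is to reduce all three identities to the single relation $B\mbfQ=A$ (available here because, in the full--rank case, $\mathbf B=B$ is invertible, so $\mbfQ=B^{-1}A$), together with the explicit shapes of $\widetilde A$, $\widetilde B$, $\widetilde\nu$ in~\eqref{AB3} and the flattening relations recorded just before the statement. Note that $\widetilde B$ is invertible: expanding~\eqref{AB3} along its first column gives $\det\widetilde B=-\det B\neq0$ (equivalently, $\widetilde B=\widetilde{\mathbf B}$ in this case). Hence~\eqref{eq.c1bia} and~\eqref{eq.c1biv} can be verified by left-multiplying both sides by $\widetilde B$. Writing $B\mbfQ=A$ in block form for the partitions $A=(a_1\,|\,a_2\,|\,a_*)$, $B=(b_1\,|\,b_2\,|\,b_*)$ of~\eqref{AB2} and the decomposition~\eqref{eq:BinvA_def}, the relation becomes
\be
b_1\mbfQ_{11}+b_2\mbfQ_{12}+b_*{\mbfQ_1^*}^t=a_1,\qquad
b_1\mbfQ_{12}+b_2\mbfQ_{22}+b_*{\mbfQ_2^*}^t=a_2,\qquad
b_1\mbfQ_1^*+b_2\mbfQ_2^*+b_*\mbfQ^*=a_*,
\ee
where $\mbfQ_{11},\mbfQ_{12},\mbfQ_{22}$ are scalars, $\mbfQ_1^*,\mbfQ_2^*$ are $1\times N$, and $\mbfQ^*$ is $N\times N$.

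First I would establish~\eqref{eq.c1bia}. Let $M$ denote the block matrix appearing on its right-hand side. Since $\widetilde B$ has the form~\eqref{AB3}, the product $\widetilde B\,M$ splits into its first row and its remaining $(N+2)$-row block. For the first row one computes $(-1,1,1,0)\cdot M$ block-column by block-column; in each of the four block columns the scalar combinations of the $\mbfQ$'s telescope, leaving $(-1,0,0,0)$, which is exactly the first row of $\widetilde A$. For the lower block, $(\,0\,|\,b_1\,|\,b_2\,|\,b_*\,)\cdot M$ is, in each block column, a combination of the three displayed identities above plus a leftover multiple of $b_1$ and/or $b_2$, and it reproduces $(-b_1-b_2+a_1+a_2,\ a_1-b_2,\ a_2-b_1,\ a_*)$, the lower block of $\widetilde A$. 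Inverting $\widetilde B$ yields~\eqref{eq.c1bia}.

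For~\eqref{eq.c1biv}, put $u=B^{-1}\nu=(u_1,u_2,u^*)$, so $b_1u_1+b_2u_2+b_*u^*=\nu$. Multiplying the claimed vector $(u_1+u_2-1,\,u_1,\,u_2,\,u^*)^t$ by $\widetilde B$: the first row gives $-(u_1+u_2-1)+u_1+u_2=1$, matching the first entry of $\widetilde\nu=(1,\nu)$, and the lower block returns $b_1u_1+b_2u_2+b_*u^*=\nu$; hence $\widetilde B^{-1}\widetilde\nu=(u_1+u_2-1,\,u)$. Finally,~\eqref{eq.c1ff} follows by pairing this with $\widetilde f=(\widetilde f_0,\widetilde f_1,\widetilde f_2,\widetilde f^*)$ and using $f_1=\widetilde f_0+\widetilde f_1$, $f_2=\widetilde f_0+\widetilde f_2$ together with the agreement of the last $N$ entries of $f$ and $\widetilde f$: the terms $\widetilde f_0(u_1+u_2)$ combine with $\widetilde f_1u_1+\widetilde f_2u_2$ to give $f_1u_1+f_2u_2$, leaving $\widetilde f^t\widetilde B^{-1}\widetilde\nu=f^tB^{-1}\nu-\widetilde f_0$.

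I do not expect a genuine obstacle here: the whole argument is linear-algebra bookkeeping built on $B\mbfQ=A$ and $Bu=\nu$. The only points that require care are tracking the block sizes (so that products such as $b_*{\mbfQ_1^*}^t$ make sense as $(N+2)\times N$ times $N\times 1$), and treating the first row of $\widetilde B$ and $\widetilde A$ separately from the lower block --- this is exactly where the $-1$ entries in~\eqref{eq.c1bia},~\eqref{eq.c1biv} and the additive $-\widetilde f_0$ in~\eqref{eq.c1ff} originate.
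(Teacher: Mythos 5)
Your proposal is correct and follows essentially the same route as the paper: both verify \eqref{eq.c1bia} and \eqref{eq.c1biv} by left-multiplying by the invertible matrix $\widetilde B$ and reducing to the block form of $B\mbfQ=A$ (resp.\ $Bu=\nu$), and both derive \eqref{eq.c1ff} from \eqref{eq.c1biv} together with the flattening relations $f_i=\widetilde f_0+\widetilde f_i$. The block computations you sketch check out, so there is nothing to add.
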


\begin{proof}
We will show that both sides of Equations~\eqref{eq.c1bia} and
Equations~\eqref{eq.c1biv} are equal after multiplying by the invertible
matrix $\widetilde{B}$. For Equation~\eqref{eq.c1bia}, we have
\bea[]
&\left(\!\!\begin{array}{ccc|c}
 -1 & 1 & 1 & 0\\
 0 & b_1 & b_2 & b_{*}
 \end{array}\!\!\right)
\left(\!\!
\def\arraystretch{1.25}
\begin{array}{ccc|c}
 {\mbfQ_{11}}+2 {\mbfQ_{12}}+{\mbfQ_{22}}-1 &
 {\mbfQ_{11}}+{\mbfQ_{12}}-1 & {\mbfQ_{12}}+{\mbfQ_{22}}-1 &
 {\mbfQ_{1}^*}+{\mbfQ_{2}^*} \\
 {\mbfQ_{11}}+{\mbfQ_{12}}-1 & {\mbfQ_{11}} &
 {\mbfQ_{12}}-1 & {\mbfQ_{1}^*} \\
 {\mbfQ_{12}}+{\mbfQ_{22}}-1 & {\mbfQ_{12}}-1 &
 {\mbfQ_{22}} & {\mbfQ_{2}^*} \\
 \hline
 {\mbfQ_{1}^*}^t+{\mbfQ_{2}^*}^t &
 {\mbfQ_{1}^*}^t & {\mbfQ_{2}^*}^t & {\mbfQ^*} \\
\end{array}
\!\!\right)\\
&\=\left(\!\!
\begin{array}{ccc|c}
-1 & 0 & 0 & {0}\\
\substack{b_1(-1+\mbfQ_{11}+\mbfQ_{12})+b_*{\mbfQ_1^*}^t\\
+b_2(-1+\mbfQ_{12}+\mbfQ_{22})+b_*{\mbfQ_2^*}^t}&
\substack{b_1\mbfQ_{11}+b_*{\mbfQ_1^*}^t\\+b_2(-1+\mbfQ_{12})} &
\substack{b_2\mbfQ_{22}+b_*{\mbfQ_2^*}^t\\+b_1(-1+\mbfQ_{12})} &
b_1\mbfQ_1^*+b_2\mbfQ_2^*+b_*\mbfQ_*
\end{array}
\!\!\right)\\
&\= \widetilde A.
\eea
For Equation~\eqref{eq.c1biv}, we have
\be
\begin{pmatrix}
 -1 & 1 & 1 & 0\\
 0 & b_1 & b_2 & b_{*}
\end{pmatrix}
\begin{pmatrix}
(B^{-1}\nu)_1+(B^{-1}\nu)_2-1\\
B^{-1}\nu
\end{pmatrix}
\=
\begin{pmatrix}
1\\
\nu
\end{pmatrix}.
\ee
Finally, for Equation~\eqref{eq.c1ff},
\be
\begin{aligned}
\widetilde{f}^{t}\widetilde{B}^{-1}\nu
&\=
\widetilde{f}^{t}
\begin{pmatrix}
(B^{-1}\nu)_1+(B^{-1}\nu)_2-1\\
B^{-1}\nu
\end{pmatrix}\\
&\=(\widetilde{f}_0+\widetilde{f}_1)(B^{-1}\nu)_1+(\widetilde{f}_0
+\widetilde{f}_2)(B^{-1}\nu)_2+f^{*}(B^{-1}\nu)_1-\widetilde{f}_{0}\,,
\end{aligned}
\ee
and we recall we choose $f_i=\widetilde{f}_0+\widetilde{f}_i$ for $i=1,2$.
\end{proof}

\subsection{\texorpdfstring{The case of $B$ with nullity one}{The case of B with nullity one}}
\label{sub.case1}

In this section, we prove Theorem~\ref{thm.pentagon} under the assumption that
the matrix $B$ has nullity 1. In this case, starting from the series
$\Phi^{\Xi}(\hbar)$, there are three intermediate formulas (shown as bullets)
in Equation~\eqref{2Phi} that eventually identify the result with 
$\Phi^{\widetilde\Xi}(\hbar)$. The intermediate formulas involve a Fourier
transform (adding one integration variable), a pentagon (adding a second
variable), an inverse Fourier transform (adding a third), and an application of
Fubini's theorem that removes two integration variables. The detailed computation
is given in the next proposition. 

\begin{proposition}
\label{prop.case1}
If $B$ has rank $N+1$ then $\Phi^{\Xi}(\hbar)$ is invariant under the 2--3
Pachner move given in Equation~\eqref{2Phi}.
\end{proposition}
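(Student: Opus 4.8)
The plan is to realise the chain of identities in~\eqref{2Phi} for $i=1$, following the template of the proof of Proposition~\ref{prop.case0} but inserting one Fourier-type identity before the pentagon and one after it, and then removing the two surplus integration variables by Fubini's theorem. Since a quad move acts column by column and, as recorded by the superscript in~\eqref{2Phi}, $B$ is brought to the full-rank matrix $\mathbf B$ by an elementary quad move on one of the first two tetrahedra, after reordering the tetrahedra (harmless by Section~\ref{sec.elementary}) I may assume that this move acts on the first tetrahedron, replacing $z_1$ by $\mathbf z_1\in\{z_1',z_1''\}$, and that $\mathbf z_2=z_2$. Thus $\Phi^\Xi(\hbar)=\langle I_0\rangle_{x,\Lambda_0}$ with $\Lambda_0=\diag(\mathbf z')-\mbfQ$, where $I_0$ carries the factor $\psi_\hbar(x_1,\mathbf z_1)$, and correspondingly the quad of $\widetilde\calT$ used to define $\Phi^{\widetilde\Xi}(\hbar)$ has the shape $\widetilde z_1$ of~\eqref{z23} replaced by some $\widetilde{\mathbf z}_1\in\{\widetilde z_1',\widetilde z_1''\}$, the remaining shapes of $\widetilde\calT$ being as in~\eqref{z23}.

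First I would use that $(z_1')''=z_1$ and $(z_1'')'=z_1$ to apply Corollary~\ref{cor.fourier} (when $\mathbf z_1=z_1'$) or Theorem~\ref{thm.fourier} (when $\mathbf z_1=z_1''$), rewriting $\psi_\hbar(x_1,\mathbf z_1)$ as a one-variable formal Gaussian integral of an expression built from $\psi_\hbar(\,\cdot\,,z_1)$; combined with the change of variables of Lemma~\ref{lem.ff1} and the Gaussian recentering of Lemma~\ref{lem.ff2}, this presents $\Phi^\Xi(\hbar)$ as a formal Gaussian integral over $N+3$ variables whose $\psi_\hbar$-factors carry the shapes $z_1$, $z_2$ and the remaining $N$ shapes of $\calT$. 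Next I would apply the pentagon identity of Theorem~\ref{thm.pentagon} to the pair $\psi_\hbar(x_1,z_1)\,\psi_\hbar(x_2,z_2)$, followed by the affine substitutions used to pass from $I_1$ to $I_3$ in the proof of Proposition~\ref{prop.case0}; this introduces a fourth extra variable and produces $\psi_\hbar$ at $\widetilde z_1=z_1z_0^{-1}$, $\widetilde z_0=z_0$ and $\widetilde z_2=z_2z_0^{-1}$, with $z_0=z_1+z_2-z_1z_2$ as in~\eqref{5z}. Since the chosen quad of $\widetilde\calT$ calls for $\widetilde{\mathbf z}_1$ rather than $\widetilde z_1$, I would apply one more Fourier-type identity (again Theorem~\ref{thm.fourier} or Corollary~\ref{cor.fourier}) to $\psi_\hbar(\,\cdot\,,\widetilde z_1)$, introducing the fifth extra variable, so that the $\psi_\hbar$-factors match exactly those of $\Phi^{\widetilde\Xi}(\hbar)$. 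By then two of the integration variables occur in none of the arguments of the $\psi_\hbar$; after checking that the relevant diagonal block of the quadratic form is invertible, I would integrate them out by Fubini's theorem in the form of Lemma~\ref{lem.ff1}(c), arriving at $e^{c\hbar}\langle I_3\rangle_{\widetilde x,\Lambda_3}$ over $N+3$ variables, with $c\in\frac1{24}\BZ$.

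The hard part will be the linear algebra of this last step: one must verify that the quadratic form that survives equals $\Lambda_3=\diag(\widetilde{\mathbf z}')-\widetilde{\mathbf B}^{-1}\widetilde{\mathbf A}$ and that the accumulated exponential prefactor is $\exp\big(\frac\hbar8\,\widetilde f^{\,t}\widetilde{\mathbf B}^{-1}\widetilde{\mathbf A}\widetilde f-\frac{\hsqrt}2\,\widetilde x^{\,t}(\widetilde{\mathbf B}^{-1}\widetilde\nu-1)\big)$ up to an overall $e^{c\hbar}$, so that the right-hand side is $e^{c\hbar}\Phi^{\widetilde\Xi}(\hbar)$ and the residual $e^{c\hbar}$ is absorbed by the flattening freedom of Section~\ref{sec.elementary}. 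I would isolate this into a block-matrix lemma in the spirit of Lemmas~\ref{lem.quad.inv} and~\ref{lem.23.c0}: multiply the asserted identities through by the invertible matrix $\widetilde{\mathbf B}$ and check them using the symmetry of $\mbfQ$ and of $\widetilde{\mathbf B}^{-1}\widetilde{\mathbf A}$, the block forms of $(\widetilde A\,|\,\widetilde B)$ in~\eqref{AB3} and of $(\widetilde{\mathbf A}\,|\,\widetilde{\mathbf B})$, the shape relations~\eqref{eq.23sr} and their additive analogues for the flattenings, and the Schur-complement formula for the block removed in the Fubini step. No new idea is needed beyond this bookkeeping; the conceptual input---that the Fourier and pentagon identities for $\psi_\hbar$ already encode the quad move and the 2--3 move---is supplied by Theorems~\ref{thm.fourier} and~\ref{thm.pentagon}, and the only delicate points are the several changes of variables and the single Schur complement.
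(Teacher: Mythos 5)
Your plan coincides with the paper's proof: a single Fourier/inverse-Fourier identity (Theorem~\ref{thm.fourier} or Corollary~\ref{cor.fourier}) converting the quad-moved shape back to $z_1$, the pentagon identity of Theorem~\ref{thm.pentagon} with the same affine substitutions as in the full-rank case, one more Fourier identity to reach the quad of $\widetilde\calT$, Fubini on the two variables ($w_1,x_1$) absent from the $\psi_\hbar$-arguments, and a block-matrix lemma (the paper's Lemma~\ref{lem.23.c1}, proved by multiplying through by $\widetilde{\mathbf B}$) identifying the surviving quadratic form and linear/quadratic prefactors with those of $\Phi^{\widetilde\Xi}(\hbar)$ up to $e^{c\hbar}$. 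The only cosmetic difference is that the paper uses~\cite[Lem.A.3]{DG} to fix the quad move to be $z_1\mapsto z_1'$ rather than leaving both choices open, but this does not change the argument.
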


\begin{proof}
Following the discussion above we can assume that
$\operatorname{rank}(b_1\,|\,b_2\,|\,b_*) = \operatorname{rank}(b_2\,|\,b_*) = N+1$.
Then by \cite[Lem.A.3]{DG}, the matrix $(a_1\,|\,b_2\,|\,b_*)$ has full rank and
we can apply a quad move to the first columns of $(A\,|\,B)$ to obtain
\bea
\mathbf A \= 
(-b_1 \,|\, a_2 \,|\, a_*)\,,\qquad
\mathbf B \= (a_1-b_1 \,|\, b_2 \,|\, b_*)\,,\qquad
\boldsymbol{\nu}
\=
\nu-b_1\,,
\eea
where $\mathbf B$ has full rank. The proof will use the following sequence
of intermediate matrices
\be
\label{manyAB}
\begin{small}
\begin{tikzpicture}[baseline=(current  bounding  box.center)]
\node[right] at (0,0) {$(\mathbf A\,|\, \mathbf B\,|\,\boldsymbol{\nu}) \,=
\left(\!\!\begin{array}{c|c|c|c|c|c|c}
-b_1 & a_2 & a_* & a_1-b_1 & b_2 & b_* & \nu-b_1
\end{array}\!\!\right)$};

\node[right] at (1,-0.9) {$q_1^{-1}\times 1$};
\draw[->,thick] (1,-0.5)--(1,-1.5);

\node[right] at (0,-2) {$(A\,|\,B\,|\,\nu) \,= \left(\!\!\begin{array}{c|c|c|c|c|c|c}
a_1 & a_2 & a_* & b_1 & b_2 & b_* &\nu
\end{array}\!\!\right)$};

\node[right] at (1,-2.9) {$2\to 3$};
\draw[->,thick] (1,-2.5)--(1,-3.5);

\node[right] at (0,-4.25) {$(\widetilde A\,|\,\widetilde B\,|\,\widetilde{\nu}) \,=
\left(\!\!\begin{array}{c|c|c|c|c|c|c|c|c}
-1 & 0 & 0 & 0 & -1 & 1 & 1 & 0 & 1\\
a_1+a_2-b_1-b_2 & a_1-b_2 & a_2-b_1 & a_*&
0 & b_1 & b_2 & b_* & \nu
\end{array}\!\!\right)$};

\node[right] at (1,-5.4) {$1\times q_1\times 1$};
\draw[->,thick] (1,-5)--(1,-6);

\node[right] at (0,-6.75) {$(\widetilde {\mathbf A}\,|\,\widetilde
{\mathbf B}\,|\,\widetilde{\boldsymbol{\nu}})
\,= 
\left(\!\!\begin{array}{c|c|c|c|c|c|c|c|c}
-1 & -1 & 0 & 0 & -1 & -1 & 1 & 0 & 0\\
a_1+a_2-b_1-b_2 & -b_1 & a_2-b_1 & a_*&
0 & a_1-b_1-b_2 & b_2 & b_* & \nu-b_1
\end{array}\!\!\right)$};

\end{tikzpicture}
\end{small}
\ee
where $\mathbf B$ and $\widetilde {\mathbf B}$ are invertible.
With $x=(x_1,x_2,x^*)$ and $z=(z_1,z_2,z^*)$ vectors of size $N+2$,
$\Phi^{\Xi}(\hbar)$ is defined by 
\be
\Phi^{\Xi}(\hbar) \= \langle I_0 \rangle_{(x_1,x_2,x^*), \mathbf \Lambda_0}
\ee
where with $\mathbf f = (f_1',f_2,f^*)$
\begin{align*}
I_0 &\=\exp\Big(\frac\hbar8 \mathbf f^t \mathbf B^{-1} \mathbf A \mathbf f
-\frac{\hsqrt}2 x
(\mathbf B^{-1}\boldsymbol{\nu}-1)\Big)\,
\psi_{\hbar}\Big( x_1,\frac 1 {1-z_1}\Big)\,
\psi_{\hbar}(x_2,z_2)\,
\prod_{j=1}^{N} \psi_{\hbar}( x^*_j, z^*_j)
\end{align*}
and 
\[
\mathbf \Lambda_0\=\diag(z_1'',z_2',{z^*}')-\mathbf Q \,.
\]
We apply Corollary \ref{cor.fourier} to the $\psi_\hbar$ with argument $ x_1$
and introduce a new variable $w_1$ to obtain
\be
\Phi^{\Xi}(\hbar) \= \langle I_1
\rangle_{(w_1, x_1, x_2,x^*),\mathbf \Lambda_1}
\ee
where
\begin{align*}
I_1 &\= \exp\Big(\frac{\hbar}{24}
+\frac\hbar8 \mathbf f^t \mathbf B^{-1} \mathbf A \mathbf f
-\frac{\hsqrt}2 x
(\mathbf B^{-1}\boldsymbol{\nu}-1)
-x_1\frac {\hsqrt}{2}\Big)\\
&\qquad\times\psi_{\hbar}\Big(w_1- x_1(1-z_1^{-1}) ,z_1\Big)\,
\psi_{\hbar}(x_2,z_2)
\prod_{j=1}^{N} \psi_{\hbar}( x_j^*, z_j^*)
\end{align*}
and
\[
\mathbf \Lambda_1 \=
\begin{pmatrix}
\frac {z_1} {1-z_1} & 0\\
0 & \mathbf \Lambda
\end{pmatrix}.
\]
We substitute $w_1 \mapsto w_1+ x_1(1-z_1^{-1})$ and obtain, from
Lemma~\ref{lem.ff1}, that
\be
\Phi^{\Xi}(\hbar) \=
\langle I_2 \rangle_{(w_1, x_1, x_2,x^*),\mathbf \Lambda_2}
\ee
where
\begin{align*}
I_2 &\=\exp\Big(\frac{\hbar}{24}
+\frac\hbar8 \mathbf f^t \mathbf B^{-1} \mathbf A \mathbf f
-\frac{\hsqrt}2 x
(\mathbf B^{-1}\boldsymbol{\nu}-1)
-x_1\frac \hsqrt2\Big)\,
\psi_{\hbar}(w_1,z_1)\,
\psi_{\hbar}(x_2,z_2)
\prod_{j=1}^{N} \psi_{\hbar}( x_j^*, z_j^*)
\end{align*}
and with $\mathbf Q_{11} = 0$ from Lemma \ref{lem.23.c1}
\[
\mathbf \Lambda_2 \= 
\left(\!\!
\def\arraystretch{1.15}
\begin{array}{ccc|c}
 \frac{z_1}{1-{z_1}} & -1 & 0 & 0 \\
 -1 & 0 & -{\mathbf Q_{12}} & -{\mathbf Q_1^*} \\
 0 & -{\mathbf Q_{12}} & \frac{1}{1-{z_2}}-{\mathbf Q_{22}} & -{\mathbf Q_2^*} \\
 \hline
 0 & -{\mathbf Q_1^*}^t & -{\mathbf Q_2^*}^t & {z^*}'-{\mathbf Q^*} \\
\end{array}
\!\!\right).
\]
We apply the pentagon identity of Theorem \ref{thm.pentagon} to the $\psi_h$
with arguments $w_1$ and $x_1$ and obtain, with the new integration variable $x_0$,
that
\bea
\Phi^{\Xi}(\hbar) \=
\langle I_3 \rangle_{(x_0,w_1, x_1, x_2,x^*),
\mathbf \Lambda_3}
\eea
where
\begin{align*}
I_3 &\= \exp\Big(\frac\hbar8 \mathbf f^t \mathbf B^{-1} \mathbf A \mathbf f
-\frac{\hsqrt}2 x
(\mathbf B^{-1}\boldsymbol{\nu}-1)
-x_1 \frac {\hsqrt}{2}\Big)\,
\psi_\hbar\Big(-x_0-x_2+\frac{w_1z_2+x_2z_1}{z_0},z_1z_0^{-1}\Big)\\
&\psi_\hbar\Big(x_0+w_1+x_2-\frac{w_1z_2+x_2z_1}{z_0},z_0\Big)\,
\psi_\hbar\Big(-x_0-w_1+\frac{w_1z_2+x_2z_1}{z_0},z_2z_0^{-1}\Big)
\prod_{j=1}^{N} \psi_{\hbar}( x_j^*, z_j^*)
\end{align*}
and
\[
\mathbf \Lambda_3 \= 
\left(\!\!
\def\arraystretch{1.15}
\begin{array}{cccc|c}
 \frac{({z_1}+{z_2}-z_1z_2)^2}{({z_1}-1) {z_1} ({z_2}-1)
 {z_2}} & 0 & 0 & 0 & 0 \\
 0 & \frac{z_1}{1-{z_1}} & -1 & 0 & 0 \\
 0 & -1 & 0 & -{\mathbf Q_{12}} & -{\mathbf Q_{1}^*} \\
 0 & 0 & -{\mathbf Q_{12}} &
 \frac{1}{1-{z_2}}-{\mathbf Q_{22}} & -{\mathbf Q_{2}^*} \\
 \hline
 0 & 0 & -{\mathbf Q_{1}^*}^t &
 -{\mathbf Q_{2}^*}^t & {z^*}'-{\mathbf Q^*} \\
\end{array}
\!\!\right).
\]
We use Lemma \ref{lem.ff1} to change the variables
$x_0\mapsto x_0 -w_1-x_2+\frac{w_1z_2+x_2z_1}{z_0}$ and obtain
that
\bea
\Phi^{\Xi}(\hbar) \=
\langle I_4 \rangle_{(x_0,w_1,x_1, x_2,x^*),
\mathbf \Lambda_4}
\eea
where
\begin{align*}
I_4 &\= \exp\Big(\frac\hbar8 \mathbf f^t \mathbf B^{-1} \mathbf A \mathbf f
-\frac{\hsqrt}2 x
(\mathbf B^{-1}\boldsymbol{\nu}-1)
-x_1 \frac {\hsqrt}{2}\Big)\\
&\qquad\times\psi_\hbar(-x_0+w_1,z_1z_0^{-1})\,
\psi_\hbar(x_0,z_0)\,
\psi_\hbar(-x_0+x_2,z_2z_0^{-1})
\prod_{j=1}^{N} \psi_{\hbar}( x_j^*, z_j^*)
\end{align*}
and
\[
\mathbf \Lambda_4 \= 
\left(\!\!
\def\arraystretch{1.15}
\begin{array}{cccc|c}
\frac{({z_1}+{z_2}-z_1z_2)^2}{({z_1}-1) {z_1} ({z_2}-1) {z_2}} &
\frac{{z_1}+{z_2}-z_1z_2}{({z_1}-1) {z_2}} & 0 &
\frac{{z_1}+{z_2}-z_1z_2}{{z_1} ({z_2}-1)} & 0 \\
\frac{{z_1}+{z_2}-z_1z_2}{({z_1}-1) {z_2}} &
\frac{{z_1}}{{z_2}(1-{z_1})} & -1 & 1 & 0 \\
 0 & -1 & 0 & -{\mathbf Q_{12}} & -{\mathbf Q_{1}^*} \\
\frac{{z_1}+{z_2}-z_1z_2}{{z_1} ({z_2}-1)}  & 1 & -{\mathbf Q_{12}} &
-\mathbf Q_{22}-\frac{z_1+z_2-z_1z_2}{z_1(z_2-1)} & -{\mathbf Q_{2}^*} \\
\hline
 0 & 0 & -{Q_{1}^*}^t & -{\mathbf Q_{2}^*}^t & {z^*}'-{\mathbf Q_{}^*} \\
\end{array}
\!\!\right).
\]
We substitute $w_1\mapsto w_1+x_0$ and $x_2\mapsto x_2+x_0$ to obtain, with
Lemma \ref{lem.ff1}, that
\bea
\Phi^{\Xi}(\hbar) \=
\langle I_5 \rangle_{(x_0,w_1,x_1, x_2,x^*),
\mathbf \Lambda_5}
\eea
where
\begin{align*}
I_5 &\= \exp\Big(\frac\hbar8 \mathbf f^t \mathbf B^{-1} \mathbf A \mathbf f
-\frac{\hsqrt}2 x
(\mathbf B^{-1}\boldsymbol{\nu}-1)
-\frac{\hsqrt}2 x_0(\mathbf B^{-1}\boldsymbol{\nu}_2-1)
-x_1 \frac {\hsqrt}{2}\Big)\\
&\qquad\times\psi_\hbar(w_1,z_1z_0^{-1})\,
\psi_\hbar(x_0,z_0)\,
\psi_\hbar(x_2,z_2z_0^{-1})
\prod_{j=1}^{N} \psi_{\hbar}( x_j^*, z_j^*)
\end{align*}
and
\[
\mathbf \Lambda_5 \= 
\left(\!\!
\def\arraystretch{1.15}
\begin{array}{cccc|c}
 -{\mathbf Q_{22}}+\frac{1}{({z_1}-1) ({z_2}-1)} &
 0 & -1-{\mathbf Q_{12}} & 1-{\mathbf Q_{22}} & -{\mathbf Q_{2}^*} \\
 0 & \frac{{z_1}}{{z_2}(1-{z_1})}& -1 & 1 & 0 \\
 -1-{\mathbf Q_{12}} & -1 & 0 & -{\mathbf Q_{12}} & -{\mathbf Q_{1}^*} \\
 1-{\mathbf Q_{22}} & 1 & -{\mathbf Q_{12}} &
-\mathbf Q_{22}+\frac{z_1+z_2-z_1z_2}{z_1(z_2-1)}& -{\mathbf Q_{2}^*} \\
\hline
-{\mathbf Q_{2}^*}^t & 0 & -{\mathbf Q_{1}^*}^t & -{\mathbf Q_{2}^*}^t &
{z^*}'-{\mathbf Q^*} \\
\end{array}
\!\!\right).
\]
By applying the first quad move from Theorem \ref{thm.fourier} to
$\psi_\hbar(w_1,z_1z_0^{-1})$ we obtain, with a new integration variable $y_1$, that
\bea
\Phi^{\Xi}(\hbar) \=
e^{-\frac \hbar {24}}\langle I_6 \rangle_{(y_1,x_0,w_1, x_1, x_2,x^*),
\mathbf \Lambda_6}
\eea
where
\begin{align*}
I_6 &\=\exp\Big(-\frac{\hbar}{24}
+\frac\hbar8 \mathbf f^t \mathbf B^{-1} \mathbf A \mathbf f
-\frac{\hsqrt}2 x
(\mathbf B^{-1}\boldsymbol{\nu}-1)
\\ & \quad
-\frac{\hsqrt}2 x_0((\mathbf B^{-1}\boldsymbol{\nu})_2-1)
-x_1 \frac {\hsqrt}{2}
+\Big(y_1+\frac{w_1z_1z_0^{-1}}{1-z_1z_0^{-1}}\Big)\frac{\hsqrt}{2}\Big)\\
&\qquad\times
\psi_\hbar\Big(y_1+\frac{w_1z_1z_0^{-1}}{1-z_1z_0^{-1}},\frac1{1-z_1z_0^{-1}}\Big)\,
\psi_\hbar(x_0,z_0)\,
\psi_\hbar(x_2,z_2z_0^{-1})
\prod_{j=1}^{N} \psi_{\hbar}( x_j^*, z_j^*)
\end{align*}
and
\begin{small}
\[
\mathbf \Lambda_6 \=
\left(\!\!
\def\arraystretch{1.15}
\begin{array}{ccccc|c}
\frac{(z_1-1)z_2}{z_1} & 0& 0& 0& 0& 0\\
 0&-{\mathbf Q_{22}}+\frac{1}{({z_1}-1) ({z_2}-1)} &
 0 & -1-{\mathbf Q_{12}} & 1-{\mathbf Q_{22}} & -{\mathbf Q_{2}^*} \\
 0&0 & \frac{{z_1}}{{z_2}(1-{z_1})}& -1 & 1 & 0 \\
 0&-1-{\mathbf Q_{12}} & -1 & 0 & -{\mathbf Q_{12}} & -{\mathbf Q_{1}^*} \\
 0&1-{\mathbf Q_{22}} & 1 & -{\mathbf Q_{12}} &
-\mathbf Q_{22}+\frac{z_1+z_2-z_1z_2}{z_1(z_2-1)}& -{\mathbf Q_{2}^*} \\
\hline
0&-{\mathbf Q_{2}^*}^t & 0 & -{\mathbf Q_{1}^*}^t &
-{\mathbf Q_{2}^*}^t & {z^*}'-{\mathbf Q^*} 
\end{array}
\!\!\right).
\]
\end{small}
We change variables $y_1\mapsto y_1-w_1\frac{z_1z_0^{-1}}{1-z_1z_0^{-1}}$ using
Lemma \ref{lem.ff1} and obtain that
\bea
\Phi^{\Xi}(\hbar) \=
\langle I_7 \rangle_{(y_1,x_0,w_1, x_1, x_2,x^*),
\mathbf \Lambda_7}
\eea
where
\begin{align*}
I_7 &\=\exp\Big(-\frac{\hbar}{24}
+\frac\hbar8 \mathbf f^t \mathbf B^{-1} \mathbf A \mathbf f
-\frac{\hsqrt}2 x
(\mathbf B^{-1}\boldsymbol{\nu}-1)
-\frac{\hsqrt}2 x_0((\mathbf B^{-1}\boldsymbol{\nu})_2-1)
-x_1 \frac {\hsqrt}{2}
+y_1\frac{\hsqrt}{2}\Big)\\
& \qquad\times
\psi_\hbar\Big(y_1,\frac1{1-z_1z_0^{-1}}\Big)\,
\psi_\hbar(x_0,z_0)\,
\psi_\hbar(x_2,z_2z_0^{-1})
\prod_{j=1}^{N} \psi_{\hbar}( x_j^*, z_j^*)
\end{align*}
and
\[
\mathbf \Lambda_7 \= 
\left(\!\!
\def\arraystretch{1.15}
\begin{array}{ccccc|c}
\frac{(z_1-1)z_2}{z_1} & 0& 1& 0& 0& 0\\
 0&-{\mathbf Q_{22}}+\frac{1}{({z_1}-1) ({z_2}-1)} &
 0 & -1-{\mathbf Q_{12}} & 1-{\mathbf Q_{22}} & -{\mathbf Q_{2}^*} \\
 1&0 & 0& -1 & 1 & 0 \\
 0&-1-{\mathbf Q_{12}} & -1 & 0 & -{\mathbf Q_{12}} & -{\mathbf Q_{1}^*} \\
 0&1-{\mathbf Q_{22}} & 1 & -{\mathbf Q_{12}} &
-\mathbf Q_{22}+\frac{z_1+z_2-z_1z_2}{z_1(z_2-1)}& -{\mathbf Q_{2}^*} \\
\hline
0&-{\mathbf Q_{2}^*}^t & 0 & -{\mathbf Q_{1}^*}^t &
-{\mathbf Q_{2}^*}^t & {z^*}'-{\mathbf Q^*} \\
\end{array}
\!\!\right).
\]
Therefore, we can apply Fubini's Theorem (Lemma~\ref{lem.ff1}) with the
integration variables $w_1,x_1$,
to obtain that
\bea
\Phi^{\Xi}(\hbar) \=
\langle I_8 \rangle_{(y_1,x_0, x_2,x^*),
\mathbf \Lambda_8}
\eea
where 
\begin{align*}
I_8 &\= \exp\Big(-\frac{\hbar}{24}
+\frac\hbar8 \mathbf f^t \mathbf B^{-1} \mathbf A \mathbf f
-\frac{\hsqrt}2(
x_0 ((\mathbf B^{-1}\boldsymbol{\nu})_2-1)
+y_1((\mathbf B^{-1}\boldsymbol{\nu})_1-1)
\\ & \qquad\qquad +x_2 ((\mathbf B^{-1}\boldsymbol{\nu})_2
+(\mathbf B^{-1}\boldsymbol{\nu})_1-1)
+x^*((\mathbf B^{-1}\boldsymbol{\nu})^*-1)
)\Big)\\
& \qquad 
\times
\psi_\hbar\Big(y_1,\frac1{1-z_1z_0^{-1}}\Big)\,
\psi_\hbar(x_0,z_0)\,
\psi_\hbar(x_2,z_2z_0^{-1})
\prod_{j=1}^{N} \psi_{\hbar}( x_j^*, z_j^*)
\end{align*}
and
\[
\mathbf \Lambda_8 \=
\left(\!\!
\begin{array}{ccc|c}
\frac{({z_1}-1) {z_2}}{{z_1}} & -{\mathbf Q_{12}}-1 &
-{\mathbf Q_{12}} & -{Q_{1}^*} \\
-{\mathbf Q_{12}}-1 & \frac{1}{({z_1}-1) ({z_2}-1)}-{\mathbf Q_{22}} &
-{\mathbf Q_{12}}-{\mathbf Q_{22}} & -{\mathbf Q_{2}^*}
\\
-{\mathbf Q_{12}} & -{\mathbf Q_{12}}-{\mathbf Q_{22}} & 
-2 {\mathbf Q_{12}}-\mathbf Q_{22} + \frac{z_1+z_2-z_1z_2}{z_1(1-z_2)}
& -{\mathbf Q_{1}^*}-{\mathbf Q_{2}^*} \\
\hline
-{\mathbf Q_{1}^*}^t & -{\mathbf Q_{2}^*}^t &
-{\mathbf Q_{1}^*}^t-{\mathbf Q_{2}^*}^t & {z^*}'-{\mathbf Q^*} \\
\end{array}
\!\!\right) \,.
\]
Using Lemma \ref{lem.23.c1} we obtain, with respect to the integration
variables $\tilde x = (x_0,y_1,x_2,x^*)$ and some $c\in\frac{1}{24}\BZ$, that
\bea
\Phi^{\Xi}(\hbar) \=e^{c\hbar}
\langle I_9 \rangle_{\tilde x, \mathbf \Lambda_9}
\eea
where
\[
I_9 \= \exp\Big(\frac\hbar8 \widetilde{\mathbf f}^t \widetilde{\mathbf B}^{-1}
\widetilde{\mathbf A} \widetilde{\mathbf f}
-\frac{\hsqrt}2 \tilde x^t(\widetilde {\mathbf B}^{-1}
\widetilde{\boldsymbol{\nu}}-1)\Big)\,
\prod_{j=0}^{N+2} \psi_{\hbar}( \widetilde{x}_j^*, \widetilde{z}_j^*)
\]
and with $\widetilde {\mathbf A}$, $ \widetilde {\mathbf B}$ as in 
\eqref{manyAB}
\[
\mathbf \Lambda_9 \=
\diag(\widetilde{z}_0',
\widetilde{z}_1'',
\widetilde{z}_2',
\widetilde{z}^*{}')
- {\widetilde {\mathbf B}}^{-1}\widetilde {\mathbf A}.
\]
\end{proof}

\begin{lemma}
\label{lem.23.c1}
With the notation used in the previous proof of Proposition~\ref{prop.case1}
we have\\ $\mbfQ_{11}=0$ and the following equalities:
\begin{align}
\label{eq.c2.bia}
\widetilde {\mathbf B}^{-1} \widetilde {\mathbf A} &\=
\left(\!\!\begin{array}{ccc|c}
\mathbf Q_{22} & \mathbf Q_{12} + 1&
\mathbf Q_{12} + \mathbf Q_{22}& \mathbf Q_2^*\\
\mathbf Q_{12} + 1& 0&
\mathbf Q_{12}& \mathbf Q_1^*\\
\mathbf Q_{12} +\mathbf Q_{22}& \mathbf Q_{12}&
2\mathbf Q_{12} +\mathbf  Q_{22}& \mathbf Q_1^* + \mathbf Q_2^*\\
\hline
{\mathbf Q_2^*}^t& {\mathbf Q_1^*}^t&
{\mathbf Q_1^*}^t + {\mathbf Q_2^*}^t& \mathbf Q^*
\end{array}
\!\!\right),
\\\label{eq.c2.binu}
\widetilde {\mathbf B}^{-1}\widetilde{\boldsymbol{\nu}}
&\= \left(\!\!
\begin{array}{c}
(\mathbf B^{-1}\boldsymbol{\nu})_2\\
(\mathbf B^{-1}\boldsymbol{\nu})_1\\
(\mathbf B^{-1}\boldsymbol{\nu})_1 + (\mathbf B^{-1}\boldsymbol{\nu})_2\\
(\mathbf B^{-1}\boldsymbol{\nu})^*
\end{array}\!\!\right),
\\\label{eq.c3.ff}
\widetilde{\mathbf{f}}^{t}\widetilde {\mathbf B}^{-1}\widetilde{\boldsymbol{\nu}}
&\= \mathbf{f}^{t}\mathbf{B}^{-1}\boldsymbol{\nu}\,.
\end{align}
\end{lemma}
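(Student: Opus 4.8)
The plan is to verify all three identities by the same finite linear-algebra method used in Lemmas~\ref{lem.quad.inv} and~\ref{lem.23.c0}: everything in sight is a polynomial matrix identity, and the cleanest route is to multiply the claimed formulas on the left by the invertible matrix $\widetilde{\mathbf B}$ and check the resulting equalities $\widetilde{\mathbf B}\cdot(\text{RHS})=\widetilde{\mathbf A}$ and $\widetilde{\mathbf B}\cdot(\text{RHS})=\widetilde{\boldsymbol\nu}$ block by block, using the explicit block forms of $\widetilde{\mathbf A},\widetilde{\mathbf B},\widetilde{\boldsymbol\nu}$ from~\eqref{manyAB} together with the defining relations $\mathbf B\mathbf Q=\mathbf A$ and $\mathbf B(\mathbf B^{-1}\boldsymbol\nu)=\boldsymbol\nu$ written in $(1,1,N)$ block form.

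First I would dispose of the claim $\mathbf Q_{11}=0$, which is the only conceptual point and which feeds every subsequent cancellation. By the running assumption $\operatorname{rank}(b_1\mid b_2\mid b_*)=\operatorname{rank}(b_2\mid b_*)=N+1$, the column $b_1$ lies in the span of the columns of $(b_2\mid b_*)$; but $b_2$ and the columns of $b_*$ are exactly columns $2,\dots,N+2$ of $\mathbf B=(a_1-b_1\mid b_2\mid b_*)$, so $\mathbf B^{-1}b_1$ has vanishing first coordinate, whence $\mathbf Q_{11}=e_1^t\mathbf B^{-1}\mathbf A e_1=-e_1^t\mathbf B^{-1}b_1=0$. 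Equivalently, the first block-column of $\mathbf B\mathbf Q=\mathbf A$ degenerates to $b_2\mathbf Q_{12}+b_*(\mathbf Q_1^*)^t=-b_1$, and this relation, together with the other two block-columns of $\mathbf B\mathbf Q=\mathbf A$ and with $\mathbf B(\mathbf B^{-1}\boldsymbol\nu)=\boldsymbol\nu$, is all that is needed for~\eqref{eq.c2.bia} and~\eqref{eq.c2.binu}: inserting them into $\widetilde{\mathbf B}\cdot(\text{RHS})$, each of the $1\times1$, $1\times N$, $(N+2)\times1$ and $(N+2)\times N$ blocks collapses to the corresponding block of $\widetilde{\mathbf A}$ (resp.\ of $\widetilde{\boldsymbol\nu}=(0,\nu-b_1)^t$), exactly as in the proof of Lemma~\ref{lem.23.c0} but with the extra quad move of~\eqref{manyAB} folded in.

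For the flattening identity~\eqref{eq.c3.ff} I would first substitute~\eqref{eq.c2.binu}, writing $\widetilde{\mathbf f}^t\widetilde{\mathbf B}^{-1}\widetilde{\boldsymbol\nu}$ as $\widetilde{\mathbf f}_0(\mathbf B^{-1}\boldsymbol\nu)_2+\widetilde{\mathbf f}_1(\mathbf B^{-1}\boldsymbol\nu)_1+\widetilde{\mathbf f}_2\big((\mathbf B^{-1}\boldsymbol\nu)_1+(\mathbf B^{-1}\boldsymbol\nu)_2\big)+(\widetilde{\mathbf f}^*)^t(\mathbf B^{-1}\boldsymbol\nu)^*$, and then rewrite $\widetilde{\mathbf f}$ and $\mathbf f$ in terms of the underlying flattening data of $\calT$ via the quad-move rule for flattenings of Section~\ref{sec.elementary} and the additive analogues of the shape relations~\eqref{eq.23sr} (i.e.\ $\mathbf f=(f_1',f_2,f^*)$ and $\widetilde{\mathbf f}=(\widetilde f_0,\widetilde f_1',\widetilde f_2,f^*)$ with $f_i=\widetilde f_0+\widetilde f_i$ and the corresponding relations for $f''$). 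Collecting terms, and using $\mathbf A\mathbf f+\mathbf B\mathbf f''=\boldsymbol\nu$ to clean up, reassembles the expression into $\mathbf f^t\mathbf B^{-1}\boldsymbol\nu$; the residual constant $-\widetilde f_0$ that appeared in~\eqref{eq.c1ff} is here cancelled by the flattening shifts coming from the two extra quad moves, so~\eqref{eq.c3.ff} holds with equality rather than up to a constant.

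The main obstacle is not conceptual but organizational: Step~1 (the rank hypothesis forcing $\mathbf Q_{11}=0$) is a one-line observation, and the matrix identities of Step~2 are a routine if lengthy block computation; the real bookkeeping burden — and the place where a sign or index slip is most likely — is tracking the flattening shifts in Step~3 through the three-step chain of moves in~\eqref{manyAB} and confirming that the quad-move constants cancel precisely.
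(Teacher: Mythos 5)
Your proposal is correct and follows the paper's own proof essentially verbatim: $\mathbf Q_{11}=0$ from the rank hypothesis, verification of \eqref{eq.c2.bia} and \eqref{eq.c2.binu} by left-multiplying the claimed right-hand sides by the invertible matrix $\widetilde{\mathbf B}$ and matching blocks against \eqref{manyAB} using $\mathbf B\mathbf Q=\mathbf A$, and \eqref{eq.c3.ff} by substituting \eqref{eq.c2.binu} and the additive flattening analogues of \eqref{eq.23sr}. Your derivation of $\mathbf Q_{11}=0$ (reading off the vanishing first coordinate of $\mathbf B^{-1}b_1$ directly from $b_1\in\operatorname{span}(b_2\,|\,b_*)$) is, if anything, slightly more self-contained than the paper's terse appeal to the linear independence of $(a_1\,|\,b_2\,|\,b_*)$, but it is the same idea.
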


\begin{proof}
Similarly to the proof of Lemma \ref{lem.23.c0} we will proof Equations
\eqref{eq.c2.bia} and \eqref{eq.c2.binu} by showing that both sides are equal
after multiplying by the invertible matrix $\widetilde {\mathbf B}$.
Using $\mathbf Q=\mathbf B^{-1} \mathbf A$ and the fact that
$(a_1\,|\,b_2\,|\,b_*)$ are linearly independent we conclude $\mbfQ_{11}=0$.
For \eqref{eq.c2.bia} we compute

\bea[]
&\begin{pmatrix}
-1 & -1 & 1 & 0 \\
0 & a_1-b_1-b_2 & b_2 & b_*
\end{pmatrix}
\left(\!\!\begin{array}{ccc|c}
\mathbf Q_{22} &\mathbf  Q_{12} + 1&
\mathbf Q_{12} + \mathbf Q_{22}& \mathbf Q_2^*\\
\mathbf Q_{12} + 1& 0& \mathbf Q_{12}& \mathbf Q_1^*\\
\mathbf Q_{12} + \mathbf Q_{22}& \mathbf Q_{12}&
2\mathbf Q_{12} + \mathbf Q_{22}& \mathbf Q_1^* + \mathbf Q_2^*\\
\hline
{\mathbf Q_2^*}^t& {\mathbf Q_1^*}^t&
{\mathbf Q_1^*}^t + {\mathbf Q_2^*}^t& \mathbf Q^*
\end{array}\!\!\right)\\
\=&\begin{pmatrix}
-1 & -1 & 0 & 0\\
\substack{(a_1-b_1)\mathbf Q_{12}\\
 +b_2\mathbf Q_{22}+b_*{\mathbf Q_2^*}^t}&
\substack{b_2\mathbf Q_{12}+b_*{\mathbf Q_1^*}^t}&
\substack{b_2\mathbf Q_{12}+b_*{\mathbf Q_1^*}^t+\\
(a_1-b_1)\mathbf Q_{12}+b_2\mathbf Q_{22}+b_*{\mathbf Q_1^*}^t}&
\substack{(a_1-b_1-b_2)\mathbf Q_1^*\\
+b_2(\mathbf Q_1^*+\mathbf Q_2^*)+b_*\mathbf Q^*}
\end{pmatrix}\\
\= &\widetilde {\mathbf A}
\eea
For \eqref{eq.c2.binu} we have
\bea[]
&\begin{pmatrix}
-1 & -1 & 1 & 0 \\
0 & a_1-b_1-b_2 & b_2 & b_*
\end{pmatrix}
\left(\!\!
\begin{array}{c}
(\mathbf B^{-1}\boldsymbol{\nu})_2\\
(\mathbf B^{-1}\boldsymbol{\nu})_1\\
(\mathbf B^{-1}\boldsymbol{\nu})_1 + (\mathbf B^{-1}\boldsymbol{\nu})_2\\
(\mathbf B^{-1}\boldsymbol{\nu})^*
\end{array}\!\!\right)\\
&\= 
\left(\!\!\begin{array}{c}
0\\ \boldsymbol{\nu}
\end{array}\!\!\right)
\eea
For Equation~\eqref{eq.c3.ff}, we have
\be
\begin{aligned}
&\widetilde{\mathbf{f}}^{t}
\left(\!\!
\begin{array}{c}
(\mathbf B^{-1}\boldsymbol{\nu})_2\\
(\mathbf B^{-1}\boldsymbol{\nu})_1\\
(\mathbf B^{-1}\boldsymbol{\nu})_1 + (\mathbf B^{-1}\boldsymbol{\nu})_2\\
(\mathbf B^{-1}\boldsymbol{\nu})^*
\end{array}\!\!\right)\\
&\=
(\widetilde{f}_1'+\widetilde{f}_2)(\mathbf B^{-1}\boldsymbol{\nu})_1
+
(\widetilde{f}_0-\widetilde{f}_2)(\mathbf B^{-1}\boldsymbol{\nu})_2
+
\widetilde{f}^{*}(\mathbf B^{-1}\boldsymbol{\nu})^*\,.
\end{aligned}
\ee
Then, using the analogous relations between the flattening as given in
Equation~\eqref{eq.23sr} for the shapes, we have
$\widetilde{f}_1'+\widetilde{f}_2=f_1'=\mathbf{f}_1$ and
$\widetilde{f}_0-\widetilde{f}_2=f_2=\mathbf{f}_2$.
\end{proof}

\subsection{\texorpdfstring{The case of $B$ with nullity two}{The case of B with nullity two}}
\label{sub.case2}

In this section, we prove Theorem~\ref{thm.pentagon} under the assumption
that the matrix $B$ has nullity 2. Similarly to the proof of
Proposition~\ref{prop.case1}, we use three intermediate formulas in
Equation~\eqref{2Phi}. They involve two Fourier transforms (adding two
variables), a pentagon (adding a third variable), two inverse Fourier
transforms (adding two variables) and an applications of Fubini's Theorem
(removing four integration variables). The details are given in the following
proposition.

\begin{proposition}
\label{prop.case2}
If $B$ has rank $N$ then $\Phi^{\Xi}(\hbar)$ is invariant under the 2--3
Pachner move given in Equation~\eqref{2Phi}.
\end{proposition}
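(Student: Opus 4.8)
The plan is to follow the template of the proof of Proposition~\ref{prop.case1}, with each Fourier and Fubini step doubled to account for the $2$-dimensional kernel of $B$. Using the commutative diagram preceding Section~\ref{sub.case0} together with Theorem~\ref{thm.quadinvariance}, I would first take the quad move $q_2$ on the first two tetrahedra of $\calT$ to be the one replacing $z_1$ by $z_1'=1/(1-z_1)$ and $z_2$ by $z_2'=1/(1-z_2)$. Since $B$ has nullity $2$ and $\mathbf B$ shares its last $N$ columns with $B$, the block $b_*$ has rank $N$ and $b_1,b_2\in\operatorname{span}(b_*)$; the half-symplectic property of $(A\,|\,B)$ (cf.\ \cite[Lem.A.3]{DG}) then gives that $a_1,a_2$ are linearly independent modulo $\operatorname{span}(b_*)$, so $\mathbf B=(a_1-b_1\,|\,a_2-b_2\,|\,b_*)$ is invertible, with $\mathbf A=(-b_1\,|-b_2\,|\,a_*)$ and $\boldsymbol\nu=\nu-b_1-b_2$. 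Reading off coefficients in the basis $(a_1,a_2,b_*)$ from $\mathbf B\mbfQ=\mathbf A$ and $\mathbf Ae_i\in\operatorname{span}(b_*)$, the same computation forces the entire top-left $2\times2$ block of $\mbfQ=\mathbf B^{-1}\mathbf A$ to vanish: $\mbfQ_{11}=\mbfQ_{12}=\mbfQ_{22}=0$. With these choices $\Phi^{\Xi}(\hbar)=\langle I_0\rangle_{(x_1,x_2,x^*),\mathbf\Lambda_0}$, where $I_0$ contains $\psi_{\hbar}(x_1,1/(1-z_1))\,\psi_{\hbar}(x_2,1/(1-z_2))\prod_{j=1}^N\psi_{\hbar}(x_j^*,z_j^*)$ and $\mathbf\Lambda_0=\diag(z_1'',z_2'',(z^*)')-\mbfQ$.

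Next I would run the chain $\Phi^{\Xi}\to\otimes^2\mathrm{Fourier}\to\mathrm{Pentagon}\to\otimes^2\mathrm{Fourier}^{-1}$ of Equation~\eqref{2Phi}, each arrow being a change of variables (Lemma~\ref{lem.ff1}) or one of our functional identities. First apply Corollary~\ref{cor.fourier} to each of $\psi_{\hbar}(x_1,1/(1-z_1))$ and $\psi_{\hbar}(x_2,1/(1-z_2))$, introducing variables $w_1,w_2$ and, after linear shifts clearing their arguments (using $\mbfQ_{11}=\mbfQ_{22}=0$, as in the passage $I_1\to I_2$ of Proposition~\ref{prop.case1}), turning them into $\psi_{\hbar}(w_1,z_1)$ and $\psi_{\hbar}(w_2,z_2)$. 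Then apply the pentagon identity of Theorem~\ref{thm.pentagon} to $\psi_{\hbar}(w_1,z_1)\,\psi_{\hbar}(w_2,z_2)$, introducing $x_0$ and producing $\psi_{\hbar}(\bullet,z_1z_0^{-1})\,\psi_{\hbar}(\bullet,z_0)\,\psi_{\hbar}(\bullet,z_2z_0^{-1})$ with $z_0=z_1+z_2-z_1z_2$, and clear the arguments so the factors become $\psi_{\hbar}(w_1,z_1z_0^{-1})\,\psi_{\hbar}(x_0,z_0)\,\psi_{\hbar}(w_2,z_2z_0^{-1})$. Finally apply Theorem~\ref{thm.fourier} to each of $\psi_{\hbar}(w_1,z_1z_0^{-1})$ and $\psi_{\hbar}(w_2,z_2z_0^{-1})$, introducing $y_1,y_2$ and (after clearing arguments) producing $\psi_{\hbar}(y_1,1/(1-z_1z_0^{-1}))$ and $\psi_{\hbar}(y_2,1/(1-z_2z_0^{-1}))$, i.e.\ $\psi_{\hbar}$ at the shapes $\widetilde z_1'$ and $\widetilde z_2'$ with $\widetilde z_i:=z_iz_0^{-1}$. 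At the end one has a formal Gaussian integral over $(x_0,y_1,y_2,w_1,w_2,x_1,x_2,x^*)$ in which $w_1,w_2,x_1,x_2$ no longer occur in any $\psi_{\hbar}$.

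The last step is to integrate out $w_1,w_2,x_1,x_2$ by Fubini's theorem (Lemma~\ref{lem.ff1}(c)); this needs the $4\times4$ block of the accumulated quadratic form $\mathbf\Lambda$ in these variables to be invertible, which is an explicit check using $\mbfQ_{11}=\mbfQ_{12}=\mbfQ_{22}=0$, so that block depends only on $z_1,z_2$ and $\pm1$'s, just like the $2\times2$ case in $\mathbf\Lambda_7$ of Proposition~\ref{prop.case1}. After Fubini one is left with a Gaussian over $\widetilde x=(x_0,y_1,y_2,x^*)$, and it remains to identify its quadratic form with $\diag(\widetilde z_0',\widetilde z_1'',\widetilde z_2'',(\widetilde z^*)')-\widetilde{\mathbf B}^{-1}\widetilde{\mathbf A}$ and its exponential prefactor with $\exp\!\big(\frac{\hbar}{8}\widetilde{\mathbf f}^t\widetilde{\mathbf B}^{-1}\widetilde{\mathbf A}\widetilde{\mathbf f}-\frac{\hroot}{2}\widetilde x^t(\widetilde{\mathbf B}^{-1}\widetilde{\boldsymbol\nu}-1)\big)$ up to a factor $e^{c\hbar}$, $c\in\frac1{24}\BZ$, where $\widetilde{\mathbf A},\widetilde{\mathbf B},\widetilde{\boldsymbol\nu},\widetilde{\mathbf f}$ are the NZ data of $\widetilde\calT$ obtained by a chain of matrices analogous to Equation~\eqref{manyAB} ($\widetilde{\mathbf B}$ invertible by the same rank argument as for $\mathbf B$). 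I would isolate these identifications in a lemma analogous to Lemmas~\ref{lem.23.c0} and~\ref{lem.23.c1}, giving block-matrix formulas for $\widetilde{\mathbf B}^{-1}\widetilde{\mathbf A}$, $\widetilde{\mathbf B}^{-1}\widetilde{\boldsymbol\nu}$ and $\widetilde{\mathbf f}^t\widetilde{\mathbf B}^{-1}\widetilde{\boldsymbol\nu}$ in terms of $\mbfQ_1^*,\mbfQ_2^*,\mbfQ^*$, $\mathbf B^{-1}\boldsymbol\nu$, $\mathbf f^t\mathbf B^{-1}\boldsymbol\nu$ and the shapes; each is proved by left-multiplying by the invertible $\widetilde{\mathbf B}$ and using the nullity-$2$ analogue of Equation~\eqref{eq.BBt}, the relation $\mbfQ\mathbf f+\mathbf f''=\mathbf B^{-1}\boldsymbol\nu$, and the additive analogues of the shape relations~\eqref{eq.23sr} for the flattenings. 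Collecting the scalars ($e^{\hbar/24}$ from each Corollary~\ref{cor.fourier}, $e^{-\hbar/24}$ from each Theorem~\ref{thm.fourier}, $e^{-\hbar/24}$ from the pentagon, the Gaussian-completion factor from Fubini, and the flattening bookkeeping) gives $\Phi^{\Xi}(\hbar)=e^{c\hbar}\Phi^{\widetilde\Xi}(\hbar)$, which equals $\Phi^{\widetilde\Xi}(\hbar)$ up to the flattening ambiguity of Section~\ref{sec.elementary}.

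The main obstacle is the simultaneous bookkeeping of the $(N+7)\times(N+7)$ quadratic forms through all of these changes of variables, and in particular checking that after the $4$-variable Fubini the extra $4\times4$ block and its Schur complement recombine to give exactly $\widetilde{\mathbf B}^{-1}\widetilde{\mathbf A}$; unlike the nullity-one case one must track the interaction of the three shapes $z_1z_0^{-1},z_0,z_2z_0^{-1}$ with four auxiliary variables at once. Establishing $\mbfQ_{11}=\mbfQ_{12}=\mbfQ_{22}=0$ at the outset is what makes the intermediate quadratic forms (and hence the final identification) manageable; once their shape is guessed from the nullity-one computation, verifying the block identities is routine, if lengthy.
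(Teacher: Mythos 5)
Your proposal follows essentially the same route as the paper's proof: the same quad move producing $\mathbf A=(-b_1\,|-b_2\,|\,a_*)$, $\mathbf B=(a_1-b_1\,|\,a_2-b_2\,|\,b_*)$, the same observation that $\mbfQ_{11}=\mbfQ_{12}=\mbfQ_{22}=0$, the same chain of two applications of Corollary~\ref{cor.fourier}, one pentagon, two applications of Theorem~\ref{thm.fourier}, a four-variable Fubini, and a concluding block-matrix lemma (the paper's Lemma~\ref{lem.23.c2}) identifying the result with $\Phi^{\widetilde\Xi}(\hbar)$ up to $e^{c\hbar}$. The outline is correct; what remains is only the explicit bookkeeping of the intermediate quadratic forms, which the paper carries out in full.
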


\begin{proof}

Following the discussion above, we can assume that
$\operatorname{rank}(b_1\,|\,b_2\,|\,b_*) = \operatorname{rank}(\,b_*) = N$.
Then by \cite[Lem.A.3]{DG}, the matrix $(a_1-b_1\,|\,a_2-b_2\,|\,b_*)$ has full rank
and we can apply a quad move to the first columns of $(A\,|\,B)$ to obtain
\bea
\mathbf A \= 
(-b_1 \,|\, -b_2 \,|\, a_*),\qquad
\mathbf B \= (a_1-b_1 \,|\, a_2-b_2 \,|\, b_*)
\eea
where $\mathbf B$ has full rank. The proof will use the following sequence
of intermediate matrices
\be
\label{manyAB.c2}
\begin{small}
\begin{tikzpicture}[baseline=(current  bounding  box.center)]
\node[right] at (0,0) {$(\mathbf A\,|\, \mathbf B\,|\,\boldsymbol{\nu}) \,=
\left(\!\!\begin{array}{c|c|c|c|c|c|c}
-b_1 & -b_2 & a_* & a_1-b_1 & a_2-b_2 & b_* & \nu-b_1-b_2
\end{array}\!\!\right)$};

\node[right] at (1,-0.9) {$q_2^{-1}\times 1$};
\draw[->,thick] (1,-0.5)--(1,-1.5);

\node[right] at (0,-2) {$(A\,|\,B\,|\,\nu) \,= \left(\!\!\begin{array}{c|c|c|c|c|c|c}
a_1 & a_2 & a_* & b_1 & b_2 & b_* &\nu
\end{array}\!\!\right)$};

\node[right] at (1,-2.9) {$2\to 3$};
\draw[->,thick] (1,-2.5)--(1,-3.5);

\node[right] at (0,-4.25) {$(\widetilde A\,|\,\widetilde B\,|\,\widetilde{\nu}) \,=
\left(\!\!\begin{array}{c|c|c|c|c|c|c|c|c}
-1 & 0 & 0 & 0 & -1 & 1 & 1 & 0 & 1\\
a_1+a_2-b_1-b_2 & a_1-b_2 & a_2-b_1 & a_*&
0 & b_1 & b_2 & b_* & \nu
\end{array}\!\!\right)$};

\node[right] at (1,-5.4) {$1\times q_2\times 1$};
\draw[->,thick] (1,-5)--(1,-6);

\node[right] at (0,-6.75) {$(\widetilde {\mathbf A}\,|\,\widetilde
{\mathbf B}\,|\,\widetilde{\boldsymbol{\nu}})=$};

\node[right] at (0.25,-7.75)
{$\left(\!\!\begin{array}{c|c|c|c|c|c|c|c|c}
-1 & -1 & -1 & 0 & -1 & -1 & -1 & 0 & -1\\
a_1+a_2-b_1-b_2 & -b_1 & -b_2 & a_*&
0 & a_1-b_1-b_2 & a_2-b_1-b_2 & b_* & \nu - b_1-b_2
\end{array}\!\!\right)$};
\end{tikzpicture}
\end{small}
\ee

\noindent
where $\mathbf B$ and $\widetilde {\mathbf B}$ are invertible.
With $x=(x_1,x_2,x^*)$ and $z=(z_1,z_2,z^*)$ vectors of size $N+2$,
$\Phi^\Xi(\hbar)$ is defined by
\bea
\Phi^{\Xi}(\hbar) \= \langle I_0 \rangle_{(x_1,x_2,x^*), \mathbf \Lambda_0}
\eea
where with $\mathbf f = (f_1',f_2',f^*)$ 
\begin{align*}
I_0 &\= \exp\Big(
\frac\hbar8 \mathbf f^t \mathbf B^{-1} \mathbf A \mathbf f
-\frac{\hsqrt}2 x^t
(\mathbf B^{-1}\boldsymbol{\nu}-1)\Big)\\
&\qquad\times\psi_{\hbar}\Big( x_1,\frac 1 {1-z_1}\Big)\,
\psi_{\hbar}\Big(x_2,\frac 1 {1-z_2}\Big)\,
\prod_{j=1}^{N} \psi_{\hbar}( x^*_j, z^*_j)
\end{align*}
and
\[
\mathbf \Lambda_0 \= \diag(1-z_1^{-1}, 1-z_2^{-1}, {z^*}') - \mbfQ\,.
\]
By applying Corollary \ref{cor.fourier} to both $\psi_\hbar(x_1,\frac1{1-z_1})$
and $\psi_\hbar(x_1,\frac1{1-z_1})$ we obtain with new integration
variables $w_1$ and $w_2$ that
\bea
\Phi^{\Xi}(\hbar) \= \langle I_1
\rangle_{(w_1,w_2, x_1, x_2,x^*),\mathbf \Lambda_1}
\eea
where
\begin{align*}
I_1 &\= \exp\Big(\frac{\hbar}{12}
+\frac\hbar8 \mathbf f^t \mathbf B^{-1} \mathbf A \mathbf f
-\frac{\hsqrt}2 x^t
(\mathbf B^{-1}\boldsymbol{\nu}-1)
-(x_1+x_2)\frac {\hsqrt}{2}\Big)\\
&\qquad\times\psi_{\hbar}\Big(w_1- x_1(1-z_1^{-1}) ,z_1\Big)\,
\psi_{\hbar}\Big(w_2- x_2(1-z_2^{-1}) ,z_1\Big)\,
\prod_{j=1}^{N} \psi_{\hbar}( x_j^*, z_j^*)
\end{align*}
and
\[
\mathbf \Lambda_1 \= 
\begin{pmatrix}
\frac{z_2}{1-z_2} & 0 & 0\\
0 & \frac{z_1}{1-z_1} & 0\\
0 & 0 & \mathbf\Lambda
\end{pmatrix}
\]
The substitutions $w_1 \mapsto w_1+ x_1(1-z_1^{-1})$ and
$w_2 \mapsto w_2+ x_2(1-z_2^{-1})$ imply according to Lemma \ref{lem.ff1} that
\be
\Phi^{\Xi}(\hbar) \=
\langle I_2 \rangle_{(w_1,w_2,x_1, x_2,x^*),\mathbf \Lambda_2}
\ee
where
\begin{align*}
I_2 &\=\exp\Big(\frac{\hbar}{12}
+\frac\hbar8 \mathbf f^t \mathbf B^{-1} \mathbf A \mathbf f
-\frac{\hsqrt}2 x^t
(\mathbf B^{-1}\boldsymbol{\nu}-1)
-(x_1+x_2)\frac \hsqrt2\Big)\\
&\qquad\times\psi_{\hbar}(w_1,z_1)\,
\psi_{\hbar}(w_2,z_2)\,
\prod_{j=1}^{N} \psi_{\hbar}( x_j^*, z_j^*)
\end{align*}
and with $\mathbf Q_{11} = \mbfQ_{12} = \mbfQ_{22} = 0$ from Lemma \ref{lem.23.c2}
\[
\mathbf \Lambda_2 \= 
\left(\!\!
\begin{array}{cccc|c}
 \frac{z_1}{1-{z_1}} & 0 & -1 & 0 & 0 \\
 0 & \frac{z_2}{1-{z_2}} & 0 & -1 & 0\\
 -1 & 0 & 0 &0 & -{\mathbf Q_1^*} \\
 0 & -1 & 0 & 0 & -{\mathbf Q_2^*} \\
 \hline
 0 & 0 &-{\mathbf Q_1^*}^t & -{\mathbf Q_2^*}^t & {z^*}'-{\mathbf Q^*} \\
\end{array}
\!\!\right).
\]
We apply the pentagon identity of Theorem \ref{thm.pentagon} to the $\psi_h$
with arguments $w_1$ and $w_1$ and obtain with the new integration variable $x_0$
\bea
\Phi^{\Xi}(\hbar) \=
\langle I_3 \rangle_{(x_0,w_1,w_2, x_1, x_2,x^*),
\mathbf \Lambda_3}
\eea
where
\begin{align*}
I_3 &\=\exp\Big(\frac{\hbar}{24}
+\frac\hbar8 \mathbf f^t \mathbf B^{-1} \mathbf A \mathbf f
-\frac{\hsqrt}2 x^t
(\mathbf B^{-1}\boldsymbol{\nu}-1)
-(x_1+x_2) \frac {\hsqrt}{2}\Big)\\
&\qquad\times
\psi_\hbar\Big(-x_0-w_2+\frac{w_1z_2+w_2z_1}{z_0},z_1z_0^{-1}\Big)\,
\psi_\hbar\Big(x_0+w_1+w_2-\frac{w_1z_2+w_2z_1}{z_0},z_0\Big)\\
&\qquad\times
\psi_\hbar\Big(-x_0-w_1+\frac{w_1z_2+w_2z_1}{z_0},z_2z_0^{-1}\Big)\,
\prod_{j=1}^{N} \psi_{\hbar}( x_j^*, z_j^*)
\end{align*}
and
\[
\mathbf \Lambda_3 \= 
\left(\!\!
\begin{array}{ccccc|c}
 \frac{({z_1}+{z_2}-z_1z_2)^2}{({z_1}-1) {z_1} ({z_2}-1){z_2}}&
 0 & 0 & 0 & 0 & 0 \\
 0 & \frac{z_1}{1-{z_1}} & 0 & -1 & 0 & 0 \\
 0 & 0 & \frac{z_2}{1-{z_2}} & 0 & -1 & 0\\
 0 & -1 & 0 & 0 &0 & -{\mathbf Q_1^*} \\
 0 & 0 & -1 & 0 & 0 & -{\mathbf Q_2^*} \\
 \hline
 0 & 0 & 0 &-{\mathbf Q_1^*}^t & -{\mathbf Q_2^*}^t & {z^*}'-{\mathbf Q^*} \\
\end{array}
\!\!\right).
\]
With change of variables $x_0 \mapsto x_0-w_1-w_2+\frac{w_1z_2+w_2z_1}{z_0}$
Lemma \ref{lem.ff1} gives that
\bea
\Phi^{\Xi}(\hbar) \=
\langle I_4 \rangle_{(x_0,w_1,w_2, x_1, x_2,x^*),
\mathbf \Lambda_4}
\eea
where
\begin{align*}
I_4 &\= \exp\Big(\frac{\hbar}{24}
+\frac\hbar8 \mathbf f^t \mathbf B^{-1} \mathbf A \mathbf f
-\frac{\hsqrt}2 x^t
(\mathbf B^{-1}\boldsymbol{\nu}-1)
-(x_1+x_2) \frac {\hsqrt}{2}\Big)\\
&\qquad\times\psi_\hbar(-x_0+w_1,z_1z_0^{-1})\,
\psi_\hbar(x_0,z_0)\,
\psi_\hbar(-x_0+w_2,z_2z_0^{-1})\,
\prod_{j=1}^{N} \psi_{\hbar}( x_j^*, z_j^*)
\end{align*}
and
\[
\mathbf \Lambda_4 \= 
\left(\!\!
\begin{array}{ccccc|c}
 \frac{({z_1}+{z_2}-z_1z_2)^2}{({z_1}-1) {z_1} ({z_2}-1){z_2}}&
 \frac{{z_1}+{z_2}-z_1z_2}{({z_1}-1) {z_2}} &
 \frac{{z_1}+{z_2}-z_1z_2}{({z_2}-1) {z_1}} & 0 & 0 & 0 \\
 \frac{{z_1}+{z_2}-z_1z_2}{({z_1}-1) {z_2}} &
 \frac{z_1}{1-{z_1}} & 1 & -1 & 0 & 0 \\
 \frac{{z_1}+{z_2}-z_1z_2}{({z_2}-1) {z_1}} &
 1 & \frac{z_2}{1-{z_2}} & 0 & -1 & 0\\
 0 & -1 & 0 & 0 &0 & -{\mathbf Q_1^*} \\
 0 & 0 & -1 & 0 & 0 & -{\mathbf Q_2^*} \\
 \hline
 0 & 0 & 0 &-{\mathbf Q_1^*}^t & -{\mathbf Q_2^*}^t & {z^*}'-{\mathbf Q^*} \\
\end{array}
\!\!\right).
\]
We substitute $w_1\mapsto w_1+x_0$ and $w_2\mapsto w_2+x_0$ to obtain with
Lemma \ref{lem.ff1} that
\bea
\Phi^{\Xi}(\hbar) \=
\langle I_5 \rangle_{(x_0,w_1,w_2,x_1, x_2,x^*),
\mathbf \Lambda_5}
\eea
where
\begin{align*}
I_5 &\= \exp\Big(\frac{\hbar}{24}
+\frac\hbar8 \mathbf f^t \mathbf B^{-1} \mathbf A \mathbf f
-\frac{\hsqrt}2 x^t
(\mathbf B^{-1}\boldsymbol{\nu}-1)
-(x_1+x_2) \frac {\hsqrt}{2}\Big)\\
&\qquad\times\psi_\hbar(w_1,z_1z_0^{-1})\,
\psi_\hbar(x_0,z_0)\,
\psi_\hbar(w_2,z_2z_0^{-1})\,
\prod_{j=1}^{N} \psi_{\hbar}( x_j^*, z_j^*)
\end{align*}
and
\[
\mathbf \Lambda_5 \=
\left(\!\!
\begin{array}{ccccc|c}
 \frac{{z_1}+{z_2}-z_1z_2}{({z_1}-1) ({z_2}-1)}&
 0 & 0& -1 & -1 & 0 \\
 0& \frac{z_1}{(1-{z_1})z_2} & 1 & -1 & 0 & 0 \\
 0& 1 & \frac{z_2}{(1-{z_2})z_1} & 0 & -1 & 0 \\
 -1 & -1 & 0 & 0 &0 & -{\mathbf Q_1^*} \\
 -1 & 0 & -1 & 0 & 0 & -{\mathbf Q_2^*} \\
 \hline
 0 & 0 & 0 &-{\mathbf Q_1^*}^t & -{\mathbf Q_2^*}^t & {z^*}'-{\mathbf Q^*} \\
\end{array}
\!\!\right).
\]
By applying the first quad move from Theorem \ref{thm.fourier} to both
$\psi_\hbar(w_1,z_1z_0^{-1})$ and $\psi_\hbar(w_2,z_2z_0^{-1})$
 we obtain with new integration variables $y_1$ and $y_2$ that
\bea
\Phi^{\Xi}(\hbar) \=
\langle I_6 \rangle_{(y_1,y_2,x_0,w_1,w_2, x_1, x_2,x^*),
\mathbf \Lambda_6}
\eea
where
\begin{align*}
I_6 &\= \exp\Big(-\frac{\hbar}{24}
+\frac\hbar8 \mathbf f^t \mathbf B^{-1} \mathbf A \mathbf f
-\frac{\hsqrt}2 x^t
(\mathbf B^{-1}\boldsymbol{\nu}-1)
-(x_1+x_2) \frac {\hsqrt}{2}\\
&\qquad\qquad+
\Big(y_1+\frac{w_1z_1z_0^{-1}}{1-z_1z_0^{-1}}\Big)\frac{\hsqrt}{2}
+\Big(y_2+\frac{w_2z_2z_0^{-1}}{1-z_2z_0^{-1}}\Big)\frac{\hsqrt}{2}\Big)
\prod_{j=1}^{N} \psi_{\hbar}( x_j^*, z_j^*)\\
&\qquad\times
\psi_\hbar\Big(y_1+\frac{w_1z_1z_0^{-1}}{1-z_1z_0^{-1}},
\frac 1 {1-z_1z_0^{-1}}\Big)\,
\psi_\hbar\Big(y_2+\frac{w_2z_2z_0^{-1}}{1-z_2z_0^{-1}},
\frac 1 {1-z_2z_0^{-1}}\Big)\,
\psi_\hbar(x_0,z_0)
\end{align*}
and
\[
\mathbf \Lambda_6 \=
\left(\!\!
\begin{array}{ccccccc|c}
 \frac{(z_1-1)z_2}{z_1} & 0 & 0 &0 &0 &0 &0 &0 \\
 0 & \frac{(z_2-1)z_1}{z_2} & 0 & 0 &0 &0 &0 &0\\
 0 & 0 & \frac{{z_1}+{z_2}-z_1z_2}{({z_1}-1) ({z_2}-1)}&
 0 & 0& -1 & -1 & 0 \\
 0 & 0& 0& \frac{z_1}{(1-{z_1})z_2} & 1 & -1 & 0 & 0 \\
 0&0&0& 1 & \frac{z_2}{(1-{z_2})z_1} & 0 & -1 & 0 \\
 0&0&-1 & -1 & 0 & 0 &0 & -{\mathbf Q_1^*} \\
 0&0&-1 & 0 & -1 & 0 & 0 & -{\mathbf Q_2^*} \\
 \hline
 0&0&0 & 0 & 0 &
 -{\mathbf Q_1^*}^t & -{\mathbf Q_2^*}^t & {z^*}'-{\mathbf Q^*} \\
\end{array}
\!\!\right).
\]
We change variables $y_1\mapsto y_1-w_1\frac{z_1z_0^{-1}}{1-z_1z_0^{-1}}$
and $y_2\mapsto y_2-w_2\frac{z_2z_0^{-1}}{1-z_2z_0^{-1}}$ using
Lemma \ref{lem.ff1} to obtain that
\bea
\Phi^{\Xi}(\hbar) \=
\langle I_7 \rangle_{(y_1,y_2,x_0,w_1,w_2, x_1, x_2,x^*),
\mathbf \Lambda_7}
\eea
where
\begin{align*}
I_7 &\= \exp\Big(-\frac{\hbar}{24}
+\frac\hbar8 \mathbf f^t \mathbf B^{-1} \mathbf A \mathbf f
-\frac{\hsqrt}2 x^t
(\mathbf B^{-1}\boldsymbol{\nu}-1)
-(x_1+x_2) \frac {\hsqrt}{2}
+(y_1+y_2)\frac{\hsqrt}{2}\Big)\\
&\qquad\times\psi_\hbar\Big(y_1,
\frac 1 {1-z_1z_0^{-1}}\Big)\,
\psi_\hbar\Big(y_2,
\frac 1 {1-z_2z_0^{-1}}\Big)\,
\psi_\hbar(x_0,z_0)\,
\prod_{j=1}^{N} \psi_{\hbar}( x_j^*, z_j^*)
\end{align*}
and
\[
\mathbf \Lambda_7 \=
\left(\!\!
\begin{array}{ccccccc|c}
 \frac{(z_1-1)z_2}{z_1} & 0 & 0 &1 &0 &0 &0 &0 \\
 0 & \frac{(z_2-1)z_1}{z_2} & 0 & 0 &1 &0 &0 &0\\
 0 & 0 & \frac{{z_1}+{z_2}-z_1z_2}{({z_1}-1) ({z_2}-1)}&
 0 & 0& -1 & -1 & 0 \\
 1 & 0& 0& 0 & 1 & -1 & 0 & 0 \\
 0&1&0& 1 & 0 & 0 & -1 & 0 \\
 0&0&-1 & -1 & 0 & 0 &0 & -{\mathbf Q_1^*} \\
 0&0&-1 & 0 & -1 & 0 & 0 & -{\mathbf Q_2^*} \\
 \hline
 0&0&0 & 0 & 0 &
 -{\mathbf Q_1^*}^t & -{\mathbf Q_2^*}^t & {z^*}'-{\mathbf Q^*} \\
\end{array}
\!\!\right).
\]
Therefore, we can apply Fubini's Theorem (Lemma~\ref{lem.ff1}) with the
integration variables $w_1,w_2,x_1,x_2$,
to obtain that
\bea
\Phi^{\Xi}(\hbar) \=
\langle I_8 \rangle_{(y_1,y_2,x_0,x^*),
\mathbf \Lambda_8}
\eea
where
\begin{align*}
I_8 &\= \exp\Big(-\frac{\hbar}{4}(\mathbf B^{-1}\boldsymbol{\nu})_1
(\mathbf B^{-1}\boldsymbol{\nu})_2-\frac{\hbar}{24}
+\frac\hbar8 \mathbf f^t \mathbf B^{-1} \mathbf A \mathbf f
+\frac{\hsqrt}2\big(
x_0\left((\mathbf B^{-1}\boldsymbol{\nu})_1
 +(\mathbf B^{-1}\boldsymbol{\nu})_2\right)\big)\\
&\qquad\qquad\quad
-\frac{\hsqrt}2y_1\left((\mathbf B^{-1}\boldsymbol{\nu})_1-1\right)
-\frac{\hsqrt}2y_2\left((\mathbf B^{-1}\boldsymbol{\nu})_2-1\right)\\
&\qquad\qquad\quad
-\frac{\hsqrt}2{x^*}^t\left((\mathbf B^{-1}\boldsymbol{\nu})^*-
 (\mathbf B^{-1}\boldsymbol{\nu})_2\mbfQ_1^*
 -(\mathbf B^{-1}\boldsymbol{\nu})_1\mbfQ_2^*-1\right)\Big)\\
&\qquad\times\psi_\hbar\Big(y_1,
\frac 1 {1-z_1z_0^{-1}}\Big)\,
\psi_\hbar\Big(y_2,
\frac 1 {1-z_2z_0^{-1}}\Big)\,
\psi_\hbar(x_0,z_0)\,
\prod_{j=1}^{N} \psi_{\hbar}( x_j^*, z_j^*)
\end{align*}
and
\[
\mathbf \Lambda_8 \=
\left(\!\!
\begin{array}{ccc|c}
 \frac{({z_1}-1) {z_2}}{{z_1}} & 0 & -1 & -{\mbfQ_1^*} \\
 0 & \frac{({z_2}-1){z_1} }{{z_2}} & -1 & -{\mbfQ_2^*} \\
 -1 & -1 & \frac{{z_1}+{z_2}-z_2z_2}{({z_1}-1) ({z_2}-1)}+2 &
   {\mbfQ_1^*}+{\mbfQ_2^*} \\
   \hline
 -{\mbfQ_1^*}^t & -{\mbfQ_2^*}^t & {\mbfQ_1^*}^t+{\mbfQ_2^*}^t &
-{\mbfQ^*}+ {\mbfQ_1^*} {\mbfQ_2^*} + {\mbfQ_2^*} {\mbfQ_1^*}
\end{array}
\!\!\right).
\]
Using Lemma~\ref{lem.23.c2}, we obtain with respect to the integration variables
$\tilde x = (x_0,y_1,y_2,x^*)$ and some $c\in\frac 1 {24}\Z$ that
\bea
\Phi^{\Xi}(\hbar) \=
e^{c\hbar}\langle I_9 \rangle_{\tilde x, \mathbf \Lambda_9}
\eea
where with $\widetilde{\mathbf A}, \widetilde{\mathbf B}$ and
$\widetilde{\boldsymbol{\nu}}$ as in Equation~\eqref{manyAB.c2} and
\bea
I_9 \= \exp\Big(\frac\hbar8 \widetilde{\mathbf f}^t \widetilde{\mathbf B}^{-1}
\widetilde{\mathbf A} \widetilde{\mathbf f}
-\frac{\hsqrt}2 \tilde x^t(\widetilde {\mathbf B}^{-1}
\widetilde{\boldsymbol{\nu}}-1)\Big)\,
\prod_{j=0}^{N+2} \psi_{\hbar}( \widetilde{x}_j^*, \widetilde{z}_j^*)
\eea
and
\bea
\mathbf \Lambda_9
&\=\diag(\widetilde z_0',\,\widetilde z_1'',\,
\widetilde z_2'',\,\widetilde{z}^*{}')
- {\widetilde {\mathbf B}}^{-1}\widetilde {\mathbf A}.\\
\eea
\end{proof}

\begin{lemma}
\label{lem.23.c2}
With the notation used in the proof of the previous Proposition~\ref{prop.case2}
we have $\mbfQ_{11} = \mbfQ_{12} =\mbfQ_{22} = 0$ and
the following equalities: 
\begin{align}
\label{eq.c3.bia}
 \widetilde {\mathbf B}^{-1} \widetilde {\mathbf A} &\=
 \left(\!\!\begin{array}{ccc|c}
-1 & 1 & 1 & -\mbfQ_1^*-\mbfQ_2^*\\
1 & 0 & 0 & \mbfQ_1^*\\
1 & 0 & 0 & \mbfQ_2^*\\
\hline
-{\mbfQ_1^*}^t-{\mbfQ_2^*}^t & {\mbfQ_1^*}^t& {\mbfQ_2^*}^t &
\mbfQ^*-{\mbfQ_1^*}^t{\mbfQ_2^*}-{\mbfQ_2^*}^t{\mbfQ_1^*}
 \end{array}\!\!\right),\\\label{eq.c3.binu}
 \widetilde {\mathbf B}^{-1} \widetilde {\boldsymbol{\nu}} &\=
 \left(\!\!\begin{array}{c}
 -({\mathbf B}^{-1}\boldsymbol{\nu})_1
 -({\mathbf B}^{-1}\boldsymbol{\nu})_2+1\\
 ({\mathbf B}^{-1}\boldsymbol{\nu})_1\\
 {(\mathbf B}^{-1}\boldsymbol{\nu})_2\\
 ({\mathbf B}^{-1}\boldsymbol{\nu})^*
 -({\mathbf B}^{-1}\boldsymbol{\nu})_1\mbfQ_2^*
 -({\mathbf B}^{-1}\boldsymbol{\nu})_2\mbfQ_1^*
 \end{array}\!\!\right),\\\label{eq.c3.ff2}
\widetilde{\mathbf{f}}^{t}\widetilde {\mathbf B}^{-1}\widetilde{\boldsymbol{\nu}}
&\= \mathbf{f}^{t}\mathbf{B}^{-1}\boldsymbol{\nu}+\widetilde{f}_0-2
({\mathbf B}^{-1}\boldsymbol{\nu})_1({\mathbf B}^{-1}\boldsymbol{\nu})_2\,.
\end{align}
\end{lemma}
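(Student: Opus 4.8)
The plan is to follow the template already used in the proofs of Lemmas~\ref{lem.23.c0} and~\ref{lem.23.c1}. First I would establish the vanishing $\mbfQ_{11}=\mbfQ_{12}=\mbfQ_{22}=0$. Since $B$ has nullity two, the running hypothesis $\operatorname{rank}(b_*)=N=\operatorname{rank}(B)$ forces $b_1,b_2\in\operatorname{span}(b_*)$, so that $(a_1\,|\,a_2\,|\,b_*)$ has the same column space as the invertible matrix $\mathbf B=(a_1-b_1\,|\,a_2-b_2\,|\,b_*)$ and therefore has full rank $N+2$. Reading off the first two columns of $\mathbf B\mathbf Q=\mathbf A$ with $\mathbf A=(-b_1\,|\,-b_2\,|\,a_*)$, and using that $\mathbf Q=\mathbf Q^{t}$, one gets $a_1\mbfQ_{11}+a_2\mbfQ_{12}\in\operatorname{span}(b_*)$ and $a_1\mbfQ_{12}+a_2\mbfQ_{22}\in\operatorname{span}(b_*)$; the full rank of $(a_1\,|\,a_2\,|\,b_*)$ then yields all three vanishings.

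Next, for the identities~\eqref{eq.c3.bia} and~\eqref{eq.c3.binu} I would argue exactly as in the earlier cases: since $\widetilde{\mathbf B}$ is invertible (this is part of the discussion preceding Theorem~\ref{thm.23}), it suffices to verify both after left multiplication by $\widetilde{\mathbf B}$, whose explicit block form is recorded in~\eqref{manyAB.c2}. Expanding the products and substituting the block relations coming from $\mathbf B\mathbf Q=\mathbf A$ together with Step~1 --- namely $b_*{\mbfQ_j^*}^t=-b_j$ for $j=1,2$ and $(a_1-b_1)\mbfQ_1^*+(a_2-b_2)\mbfQ_2^*+b_*\mbfQ^*=a_*$ --- a direct computation shows that the left multiples reduce to $\widetilde{\mathbf A}$ and $\widetilde{\boldsymbol{\nu}}$ respectively.

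For the flattening identity~\eqref{eq.c3.ff2} I would expand $\widetilde{\mathbf f}^{t}\widetilde{\mathbf B}^{-1}\widetilde{\boldsymbol{\nu}}$ using~\eqref{eq.c3.binu}. The $*$-block contributes $\mathbf f^{*t}\big((\mathbf B^{-1}\boldsymbol{\nu})^*-(\mathbf B^{-1}\boldsymbol{\nu})_1{\mbfQ_2^*}^t-(\mathbf B^{-1}\boldsymbol{\nu})_2{\mbfQ_1^*}^t\big)$, and rewriting $\mbfQ_j^{*}\mathbf f^{*}$ by means of the first two rows of $\mathbf Q\mathbf f+\mathbf f''=\mathbf B^{-1}\boldsymbol{\nu}$ --- which read $\mbfQ_j^{*}\mathbf f^{*}=(\mathbf B^{-1}\boldsymbol{\nu})_j-\mathbf f_j''$ once $\mbfQ_{11}=\mbfQ_{12}=\mbfQ_{22}=0$ is known --- produces the quadratic term $-2(\mathbf B^{-1}\boldsymbol{\nu})_1(\mathbf B^{-1}\boldsymbol{\nu})_2$ together with $(\mathbf B^{-1}\boldsymbol{\nu})_1\mathbf f_2''+(\mathbf B^{-1}\boldsymbol{\nu})_2\mathbf f_1''$. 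Collecting the remaining terms against $\mathbf f^{t}\mathbf B^{-1}\boldsymbol{\nu}$ and feeding in the additive analogues of the quad-move and 2--3 move relations among the flattenings (those induced by $q_2^{-1}\times 1$ on $\calT$, by $1\times q_2\times 1$ on $\widetilde{\calT}$, and by the 2--3 move connecting $f$ and $\widetilde f$) should make all remaining linear contributions cancel and leave exactly the additive constant $\widetilde f_0$.

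The step I expect to be the main obstacle is the last one: the flattening bookkeeping. One has to track $\mathbf f$, $\mathbf f''$ and $\widetilde{\mathbf f}$ through the two quad moves and the 2--3 move of~\eqref{manyAB.c2}, including the cyclic $(f,f',f'')\mapsto(f',f'',f)$ relabellings induced by each quad move, and then verify that after all substitutions the linear-in-$(\mathbf B^{-1}\boldsymbol{\nu})$ terms cancel precisely, leaving only the quadratic correction and the single constant $\widetilde f_0$. By contrast the first step is a short rank argument and the second is a mechanical block identity once the vanishings are available.
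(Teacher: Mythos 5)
Your proposal is correct and follows essentially the same route as the paper: the rank argument for $\mbfQ_{11}=\mbfQ_{12}=\mbfQ_{22}=0$ (using that $b_1,b_2\in\operatorname{span}(b_*)$ so $(a_1\,|\,a_2\,|\,b_*)$ has full rank), verification of \eqref{eq.c3.bia} and \eqref{eq.c3.binu} after left multiplication by $\widetilde{\mathbf B}$ via $b_*{\mbfQ_j^*}^t=-b_j$, and the flattening identity via the relation $\mathbf Q\mathbf f+\mathbf f''=\mathbf B^{-1}\boldsymbol{\nu}$ together with the additive analogues of \eqref{eq.23sr}. The only (immaterial) difference is that you eliminate $\mbfQ_j^*\mathbf f^*$ using the flattening equation on $\calT$, whereas the paper uses the corresponding equation on $\widetilde\calT$ (i.e.\ $\widetilde{\mathbf f}^{*t}\mbfQ_i^*=(\widetilde{\mathbf B}^{-1}\widetilde{\boldsymbol{\nu}})_i-\widetilde{\mathbf f}_0-\widetilde{\mathbf f}_i''$); both yield the same cancellation and the constant $\widetilde f_0$.
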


\begin{proof}
The relation $\mathbf B^{-1} \mathbf A = \mathbf Q$ and the fact that the
columns of $(a_1\,|\,a_2\,|\,b_*)$ are linearly independent imply
that $\mbfQ_{11} = \mbfQ_{12} =\mbfQ_{22} = 0$. We will prove
Equation~\eqref{eq.c3.bia} by showing the identity after multiplying by the
invertible matrix $\widetilde{\mathbf B}$.
For \eqref{eq.c3.bia} we compute
\bea[]
&\left(\!\!\begin{array}{ccc|c}
 -1 & -1 & -1 & 0\\
 0 & a_1-b_1-b_2 & a_2-b_1-b_2 & b_*
\end{array}\!\!\right)
 \left(\!\!\begin{array}{ccc|c}
-1 & 1 & 1 & -\mbfQ_1^*-\mbfQ_2^*\\
1 & 0 & 0 & \mbfQ_1^*\\
1 & 0 & 0 & \mbfQ_2^*\\
\hline
-{\mbfQ_1^*}^t-{\mbfQ_2^*}^t & {\mbfQ_1^*}^t& {\mbfQ_2^*}^t &
\mbfQ^*-{\mbfQ_1^*}^t{\mbfQ_2^*}-{\mbfQ_2^*}^t{\mbfQ_1^*}
 \end{array}\!\!\right)\\
&\=\left(\!\!
\begin{array}{ccc|c}
-1 & -1 & -1 & {0}\\
\hline
 \substack{a_1+a_2-2b_1-2b_2\\-b_*{\mbfQ_1^*}^t-b_*{\mbfQ_2^*}^t}&
{b_*{\mbfQ_1^*}^t} &
{b_*{\mbfQ_2^*}^t} &
\substack{(a_1-b_1){\mbfQ_1^*}^t + (a_2-b_2){\mbfQ_2^*}^t\\
- b_2{\mbfQ_1^*}^t - b_1{\mbfQ_2^*}^t
+b_*{\mbfQ^*}-b_*{\mbfQ_1^*}^t{\mbfQ_2^*} - b_*{\mbfQ_2^*}^t{\mbfQ_1^*}}
\end{array}
\!\!\right)\\
&\= \widetilde{\mathbf{A}}.
\eea
For Equation~\eqref{eq.c3.binu}, we compute
\bea[]
&\left(\!\!\begin{array}{ccc|c}
-1& -1& -1& 0\\
0 & a_1-b_1-b_2 & a_2-b_1-b_2 & b_*
\end{array}\!\!\right) \left(\!\!\begin{array}{c}
 -( {\mathbf B}^{-1}\boldsymbol{\nu})_1
 -( {\mathbf B}^{-1}\boldsymbol{\nu})_2+1\\
 ( {\mathbf B}^{-1}\boldsymbol{\nu})_1\\
 ( {\mathbf B}^{-1}\boldsymbol{\nu})_2\\
 ( {\mathbf B}^{-1}\boldsymbol{\nu})^{*}
 -( {\mathbf B}^{-1}\boldsymbol{\nu})_1\mbfQ_2^*
 -( {\mathbf B}^{-1}\boldsymbol{\nu})_2\mbfQ_1^*
 \end{array}\!\!\right)\\
 &\=\left(\!\!\begin{array}{c}
 -1\\ \boldsymbol\nu
 \end{array}\!\!\right)\,.
\eea
For Equation~\eqref{eq.c3.ff2}, we note that for $i=1,2$ we have
\be
\widetilde{\mathbf{f}}^{*}{}^t\mbfQ_i^*
\=
(\widetilde {\mathbf B}^{-1}\widetilde{\boldsymbol{\nu}})_i
-\widetilde{\mathbf{f}}_0-\widetilde{\mathbf{f}}_i''
\ee
and so
\be
\begin{small}
\begin{aligned}
\widetilde{\mathbf{f}}^{t}\widetilde {\mathbf B}^{-1}\widetilde{\boldsymbol{\nu}}
\=
\widetilde{f}_0+
(\widetilde{f}_1'+\widetilde{f}_2)({\mathbf B}^{-1}\boldsymbol{\nu})_1
+(\widetilde{f}_1+\widetilde{f}_2')({\mathbf B}^{-1}\boldsymbol{\nu})_2
+\widetilde{f}^{*}{}^t({\mathbf B}^{-1}\boldsymbol{\nu})^{*}
-2({\mathbf B}^{-1}\boldsymbol{\nu})_1({\mathbf B}^{-1}\boldsymbol{\nu})_2
\end{aligned}
\end{small}
\ee
Then, using the analogous relations between the flattening as given in
Equation~\eqref{eq.23sr} for the shapes, we have
$\widetilde{f}_1'+\widetilde{f}_2=f_1'=\mathbf{f}_1$ and
$\widetilde{f}_1+\widetilde{f}_2'=f_2'=\mathbf{f}_2$.
\end{proof}


\section{\texorpdfstring{The series of the simplest hyperbolic $4_1$ knot}{The series of the simplest hyperbolic 4\_1 knot}}
\label{sec.41}

In this section, we discuss an effective computation of the power series
$\Phi^{\Xi}(\hbar)$ for the simplest hyperbolic knot, namely the $4_1$ knot.
This example was studied extensively in~\cite{DG}. From~\cite[Ex. 2.6]{DG},
we obtain that the NZ datum $\Xi_{4_1}$ is given by
\be
A\=
\begin{pmatrix}
2 & 2\\
1 & 1
\end{pmatrix},\quad
B\=
\begin{pmatrix}
1 & 1\\
1 & 0
\end{pmatrix},\quad
\nu\=
\begin{pmatrix}
2\\
1
\end{pmatrix},\quad
f\=
\begin{pmatrix}
0\\
1
\end{pmatrix},\quad
f''\=
\begin{pmatrix}
0\\
0
\end{pmatrix},\quad
\ee
and $z_1=z_2=\z_6=e^{2\pi i/6}$. Therefore, we have
\be
\Phi^{\Xi_{4_1}}(\hbar) \= e^{\frac{\hbar}{8}}
\langle
\psi_{\hb}(x_1,\z_6)\psi_{\hb}(x_2,\z_6)\rangle_{(x_1,x_2),
\Lambda_0}\,,
\ee
where
\be
\Lambda_0
\=
\begin{pmatrix}
\z_6-1 & -1\\
-1 & \z_6-1
\end{pmatrix}\,.
\ee
This is a two dimensional formal Gaussian integral which can be simplified. 
Using $\z_6=1/(1-\z_6)=1-\z_6^{-1}$ and applying a change of coordinates
$x_1\mapsto x_1-\z_6x_2$, we find that
\be
\Phi^{\Xi_{4_1}}(\hbar) \= e^{\frac{\hbar}{8}}
\langle
\psi_{\hb}(x_1-\z_6x_2,\z_6)\psi_{\hb}(x_2,\z_6)\rangle_{(x_1,x_2),
\Lambda_1}\,,
\ee
where
\be
\Lambda_1
\=
\begin{pmatrix}
\z_6-1 & 0\\
0 & 2\z_6-1
\end{pmatrix}\,.
\ee
We can apply Fubini's theorem~\cite[Prop.2.13]{AarhusII} and
Corollary~\ref{cor.fourier} to perform the integral over $x_1$. After
renaming the variable $x_2$ by $x$, we express $\Phi^{\Xi_{4_1}}(\hbar)$ by a
one-dimensional formal Gaussian integral
\be
\label{eq.phi41.1d}
\Phi^{\Xi_{4_1}}(\hbar) \= e^{\frac{\hbar}{6}}
\Big\langle
\exp\Big(\frac{x}{2}\hbar^{\frac{1}{2}}\Big)\psi_{\hb}(x,\z_6)^2\Big\rangle_{x,
2\z_6-1}\,.
\ee
Using the definition of $\psi_\hbar$ from Equation~\eqref{psih} and expanding
to $O(\hbar^{5/2})$, we obtain that
\be
\begin{small}
\begin{aligned}
&\exp\Big(\frac{x}{2}\hbar^{\frac{1}{2}}\Big)\psi_{\hb}(x,\z_6)^2\\
&\=
1
+\bigg(\frac{1}{3}x^3 + \Big(\z_6 - \frac{1}{2}\Big)x\bigg)\hbar^{1/2}
+\bigg(\frac{1}{18}x^6 + \Big(\frac{1}{2}\z_6 - \frac{1}{4}\Big)x^4
- \frac{7}{8}x^2 + \Big(-\frac{1}{6}\z_6 + \frac{1}{6}\Big)\bigg)\hbar\\
&\qquad+ \bigg(\frac{1}{162}x^9 + \Big(\frac{1}{9}\z_6 - \frac{1}{18}\Big)x^7
- \frac{1}{2}x^5 + \Big(-\frac{73}{72}\z_6 + \frac{77}{144}\Big)x^3
+ \Big(\frac{1}{12}\z_6 + \frac{1}{4}\Big)x\bigg)\hbar^{3/2}\\
&\qquad+ \bigg(\frac{1}{1944}x^{12} + \Big(\frac{5}{324}\z_6
- \frac{5}{648}\Big)x^{10} - \frac{37}{288}x^8 + \Big(-\frac{1337}{2160}\z_6
+ \frac{1357}{4320}\Big)x^6\\
&\qquad+ \Big(\frac{1}{24}\z_6 + \frac{1027}{1152}\Big)x^4
+ \Big(\frac{23}{48}\z_6 - \frac{5}{16}\Big)x^2
- \frac{1}{72}\z_6\bigg)\hbar^{2}+O(\hbar^{5/2})\,.
\end{aligned}
\end{small}
\ee
Noting that $2\z_6-1=\sqrt{-3}$, we can then evaluate Equation~\eqref{eq.phi41.1d}
with Equation~\eqref{formalG1} to obtain that
\be
e^{-\frac{\hbar}{4}}\Phi^{\Xi_{4_1}}(\hbar) \=
1 + \frac{11}{72\sqrt{-3}}\hbar + \frac{697}{2(72\sqrt{-3})^2}\hbar^{2}+O(\hbar^{4})\,.
\ee
This is in agreement with computations in~\cite{DG,DGLZ,GZ:kashaev}.
The one-dimensional formal Gaussian integral~\eqref{eq.phi41.1d} gives an effective
computation of the series $\Phi^{\Xi_{4_1}}(\hbar)$.
%
%
Indeed, using a \texttt{pari-gp} program one can compute one
hundred coefficients in a few seconds and two hundred coefficients in a few minutes,
the first few of them are given by
\be
\begin{tiny}
\begin{aligned}
&e^{-\frac{\hbar}{4}}\Phi^{\Xi_{4_1}}(\hbar)\\
&=\;
1
- \frac{11}{216}\sqrt{-3}\,\hbar - \frac{697}{31104}\hbar^2
+ \frac{724351}{100776960}\sqrt{-3}\,\hbar^3
+ \frac{278392949}{29023764480}\hbar^4
- \frac{244284791741}{43883931893760}\sqrt{-3}\,\hbar^5\\
&\qquad- \frac{1140363907117019}{94789292890521600}\hbar^6
+ \frac{212114205337147471}{20474487264352665600}\sqrt{-3}\,\hbar^7
+ \frac{367362844229968131557}{11793304664267135385600}\hbar^8\\
&\qquad- \frac{44921192873529779078383921}{1260940134703442115428352000}\sqrt{-3}\,
\hbar^9
- \frac{3174342130562495575602143407}{23109593741473993679123251200}\hbar^{10}
+O(\hbar^{11})\,.
\end{aligned}
\end{tiny}
\ee
Similar to the case of the $4_1$, one can obtain one-dimensional formal Gaussian
integrals for the next two simplest hyperbolic knots, the $5_2$ and the
$(-2,3,7)$-pretzel knot, whose details we omit.

\newpage

\addcontentsline{toc}{section}{Acknowledgements}
\noindent \textbf{Acknowledgements.} The authors wish to thank Don Zagier for enlightening conversations.
The work of M.S. and C.W. has been supported by the Max-Planck-Gesellschaft.
C.W. wishes to thank the Southern University of Science and Technology's
International Center for Mathematics in
Shenzhen for their hospitality where the paper was completed.


\appendix

\section{Complements on the Fourier transform}
\label{app.fourier}

In this appendix, we give the omitted details in the last step of the proof
of Theorem~\ref{thm.fourier}. They are an affine change of coordinates, followed
by the corresponding computation of the formal Gaussian integration.

The proof requires a version of formal Gaussian integration where the symmetric
matrix $\Lambda_\hbar \in \GL_N(\BQ(z)[x]\llbracket \hbar^{1/2} \rrbracket)$
depends on $\hbar$, such that $\Lambda_0$ is invertible. In this case, for an
integrable function $f_\hbar(x,z) \in \BQ(z)[x]\llbracket \hbar^{1/2}\rrbracket$,
we define 
\be
\label{FGIh}
\begin{aligned}
\llangle f_\hbar(x,z) \rrangle_{x,\Lambda_\hbar}
\;&:=\; \sqrt{\frac{\det(\Lambda_\hb)}{\det(\Lambda_0)}}
\big\langle
\exp\big(-\tfrac{1}{2} x^t(\Lambda_\hb-\Lambda_0)x \big)  f_\hbar(x,z)
\big\rangle_{x, \Lambda_0} \\
&\=
\frac{\int e^{-\frac12x^t \Lambda(\hb) \,x}f_{\hb}(x,z) \, dx}{
  \int e^{-\frac12x^t \Lambda(\hb) \,x} \, dx}
\in \BQ(z)\llbracket \hbar \rrbracket \,.
\end{aligned}
\ee
This version of formal Gaussian integration satisfies the properties of
Lemmas~\ref{lem.ff1} and~\ref{lem.ff2}.

We use Equations~\eqref{eq.psid} and Equation~\eqref{eq.fourier0} to obtain 
that
\bea
\psi_\hbar(x,z) \= e^{-\frac{\hbar}{24}} C_\hbar(x,z)\,
\Big\langle\!\!\Big\langle\exp\Big(\frac y 2 \hsqrt\Big)\,\psi_{\hbar}
\Big(y,\frac{1}{1-ze^{x\hbar^{1/2}}}\Big)\Big\rangle\!\!\Big\rangle_{y,1-z^{-1}e^{-x\hbar^{1/2}}}.
\eea
Lemma~\ref{lem.ff2} implies that 
\bea\label{eq.a1.eq1}
\psi_\hbar(x,z) &\=
 C_\hbar(x,z)\,
 \exp\Big(-\frac \hbar{24}-\frac {\hbar^{1/2}}2 (a+x)\Big)\\
&\qquad\times\Big\langle\exp\Big(\frac y 2 \hsqrt
+\Big({\frac{1}{ze^{x\hbar^{1/2}}}}-\frac1z\Big)\frac{y^2}2\Big)\,
\psi_{\hbar}\Big(y,\frac{1}{1-ze^{x\hbar^{1/2}}}\Big)
\Big\rangle_{y,1-{z}^{-1}},
\eea
where
$a=a_\hbar(x,z) \in \BQ(z)[x]\llbracket \hbar^{1/2} \rrbracket $ is given by
\bea
a &\;:=\;\frac 1 {\hbar^{1/2}} \log\Big(\frac {1-z}{1-ze^{x\hbar^{1/2}}}\Big)
\in \BQ(z)[x]\llbracket \hbar^{1/2} \rrbracket \,.
\eea
Similar to Equation~\eqref{eq.psid} we write
\bea
\psi_\hbar\Big(y,\frac{1}{1-ze^{x\hbar^{1/2}}}\Big)
\= \exp\Big(A_0 - (a(1-z^{-1})+x)y
-\Big({\frac{1}{ze^{x\hbar^{1/2}}}}-\frac1z\Big)\frac{y^2}2\Big)\,
\psi_\hbar\Big(y+a,\frac{1}{1-z}\Big)
\eea
where 
$A_0=A_{0,\hbar}(x,z) \in  \frac 1 \hbar \BQ(z)[x]\llbracket \hbar^{1/2} \rrbracket$
is given by
\bea
\label{eq.defA0}
A_0 &\= 
\frac 1 2 \Big(
  \log\Big(\frac {-ze^{x\hbar^{1/2}}}{1-ze^{x\hbar^{1/2}}}\Big)
  -\log\Big(\frac {-z}{1-z}\Big)
\Big)
+ \frac 1 \hbar \Big(
\Li_2\Big(\frac 1 {1-ze^{x\hbar^{1/2}}}\Big)
-\Li_2\Big(\frac 1 {1-z}\Big)
\Big)\\
&\qquad+\frac{a^2}{2z} + \frac a {\hbar^{1/2}}\log\Big(\frac{-z}{1-z}\Big)\\
&\=
\frac 1 2 a\hsqrt +\frac 1 2 x\hsqrt
+ \frac 1 \hbar \Big(
\Li_2\Big(\frac 1 {1-ze^{x\hbar^{1/2}}}\Big)
-\Li_2\Big(\frac 1 {1-z}\Big)
\Big)
+\frac{a^2}{2z} + \frac a {\hbar^{1/2}}\log\Big(\frac{-z}{1-z}\Big)\,.
\eea
Then Equation~\eqref{eq.a1.eq1} can be written as
\bea
\psi_\hbar(x,z) &\=
 C_\hbar(x,z)\,
 \exp\Big(-\frac \hbar{24}-\frac {\hbar^{1/2}}2 (a+x)+A_0\Big)\\
&\qquad\times\Big\langle\exp\Big(\frac y 2 \hsqrt
-(a(1-z^{-1}+x)y)
\Big)\,
\psi_{\hbar}\Big(y+a,\frac{1}{1-z}\Big)
\Big\rangle_{y,1-{z}^{-1}}.
\eea
We make the change of variables
\bea
w \mapsto w-a+ \frac{xz}{1-z}
\eea
and using Equation~\eqref{fga1} of Lemma~\ref{lem.ff1}, we obtain that
\bea
\psi_\hbar(x,z) &\=
 C_\hbar(x,z)\,
 \exp\Big(-\frac \hbar{24}-\frac {\hbar^{1/2}}2 (a+x)+A_0
 +\frac {a^2}2 - \frac{a^2}{2z}+ax + \frac{x^2}{2(1-z^{-1})}
 -\frac{\hbar^{1/2}} 2 a
 \Big)\\
&\qquad\times\Big\langle\exp\Big(
\frac {\hbar^{1/2}} 2
\Big(y+\frac{xz}{1-z}\Big)
\Big)\,
\psi_{\hbar}\Big(y+\frac{xz}{1-z},\frac{1}{1-z}\Big)
\Big\rangle_{y,1-{z}^{-1}}.
\eea
Hence, it remains to show that
\bea
1\= C_\hbar(x,z)\,
 \exp\Big(-\frac {\hbar^{1/2}}2 (a+x)+A_0
 +\frac {a^2}2 - \frac{a^2}{2z}+ax + \frac{x^2}{2(1-z^{-1})}
 -\frac{\hbar^{1/2}} 2 a
 \Big).
\eea
In other words, using the definitions of $C_\hbar(x,z)$ from
Equation~\eqref{eq.psid} and $A_0$ from Equation~\eqref{eq.defA0}
it suffices to prove that
\bea
0 \= &\frac 1 \hbar \Big(
\Li_2\Big(\frac 1 {1-ze^{x\hbar^{1/2}}}\Big)
-\Li_2\Big(\frac 1 {1-z}\Big)
\Big)
+\frac 1 \hbar \big(
\Li_2(z)
-\Li_2\big(ze^{x\hbar^{1/2}}\big)
\big)\\
&+\frac a {\hbar^{1/2}}\log\Big(\frac{-z}{1-z}\Big)
-\frac x {\hbar^{1/2}} \log(1-z)
+\frac{a^2}2
+ax.
\eea
With the transformation formula of the dilogarithm
\bea
\Li_2\Big(\frac{1}{1-z}\Big) \= 
\Li_2(z) - \frac{\pi^2}3 + \log(z)\log(1-z) - \frac 1 2 \log^2(z-1)
\eea
the right hand side of the previous equation is given by
\bea\label{eq.a1.exprhs}
&\frac 1 \hbar \Big(
\log\big(ze^{x\hbar^{1/2}}\big)\log\big(1-ze^{x\hbar^{1/2}}\big)
-\frac 1 2 \log^2\big(ze^{x\hbar^{1/2}}-1\big)\\
&\qquad -\log(z)\log(1-z)
+\frac 1 2 \log^2(z-1)
\Big)\\
&+\frac a {\hbar^{1/2}}\log\Big(\frac{-z}{1-z}\Big)
-\frac x {\hbar^{1/2}} \log(1-z)
+\frac{a^2}2
+ax.
\eea
With $x\hbar^{1/2} = {\log\big(e^{x\hbar^{1/2}}\big)}$ we compute
\bea[]
&\frac 1 \hbar \log\big(ze^{x\hbar^{1/2}}\big)
  \log\big(1-ze^{x\hbar^{1/2}}\big)
-\frac 1 \hbar\log(z)\log(1-z)
 -\frac x {\hbar^{1/2}} \log(1-z)
 + ax\\
\=&
\frac 1 \hbar \log\big(ze^{x\hbar^{1/2}}\big)
   \log\big(1-ze^{x\hbar^{1/2}}\big)
-\frac 1 \hbar \log(z)\log(1-z)
-\frac 1 \hbar \log(e^{x\hbar^{1/2}}) \log(1-z)
+\frac{a}{\hbar^{1/2}} \log\big(e^{x\hbar^{1/2}}\big)\\
\=& \frac 1 \hbar \log\big(ze^{x\hbar^{1/2}}\big)
\Big(
\log\big(1-ze^{x\hbar^{1/2}}\big) - \log(1-z)
\Big)
+\frac{a}{\hbar^{1/2}} \log\big(e^{x\hbar^{1/2}}\big)\\
\=&- \frac a {\hbar^{1/2}} \log\big(ze^{x\hbar^{1/2}}\big)
+\frac{a}{\hbar^{1/2}} \log\big(e^{x\hbar^{1/2}}\big)\\
\=&-\frac a {\hbar^{1/2}} \log(z)
\eea
so that Equation~\eqref{eq.a1.exprhs} becomes
\bea[]
&-\frac 1 {2\hbar} \log^2\big(ze^{x\hbar^{1/2}}-1\big)
+\frac 1 {2\hbar} \log^2(z-1)
+\frac a {\hbar^{1/2}}\log\Big(\frac{-z}{1-z}\Big)
+\frac{a^2}2
-\frac a {\hbar^{1/2}} \log(z)\\
\=&-\frac 1 {2\hbar} \log^2\big(ze^{x\hbar^{1/2}}-1\big)
+\frac 1 {2\hbar} \log^2(z-1)
-\frac a {\hbar^{1/2}}\log({z-1})
+\frac{a^2}2\\
\=
&-\frac 1 {2\hbar} \log^2\big(ze^{x\hbar^{1/2}}-1\big)
+ \frac 1 2\Big(\frac 1 {\hbar^{1/2}}\log(z-1)-a\Big)^2\\
\=&-\frac 1 {2\hbar} \log^2\big(ze^{x\hbar^{1/2}}-1\big)
+ \frac 1 {2\hbar} \log^2(ze^{x\hbar^{1/2}}-1)\\
\=&0,
\eea
which completes the proof of the last step of Theorem~\ref{thm.fourier}.


\section{Complements on the pentagon identity}
\label{app.pentagon}

In this appendix, we give the omitted details in the last step of the proof
of Theorem~\ref{thm.pentagon}. They are an affine change of coordinates, followed
by the corresponding computation of the formal Gaussian integration.

Equations~\eqref{pentagon0} and Equation~\eqref{eq.psid} give
\begin{small}
\be
\psi_\hbar(x,z_1)\psi_\hbar(y,z_2)
\=
e^{-\frac{\hbar}{24}}
C_\hbar(x,z_1) C_\hbar(y,\tz_1)\,
\llangle
\psi_\hbar(-w,\tz_1\tz_0^{-1})
\psi_\hbar(w,\tz_0)
\psi_\hbar(-w,\tz_2\tz_0^{-1})
\rrangle_{w,\tdelta}\,,
\ee
\end{small}
where
\be
\label{eq.aB.tdelta}
\begin{aligned}
\tz_1 & \= z_1e^{x\hbar^{1/2}} & 
\tz_2 & \= z_2e^{y\hbar^{1/2}} \\   
\tz_0 & \= \tz_1+\tz_1-\tz_1\tz_2, &  
\tdelta & \= \frac{(\tz_2+\tz_2-\tz_1\tz_2)^2}{\tz_1\tz_2(1-\tz_1)(1-\tz_2)} \,.
\end{aligned}
\ee
Note that $\tz_1$, $\tz_2$, $\tz_0$ and $\tdelta$ are power series in $\hb^{1/2}$
which, when evaluated at $\hb=0$, coincide with $z_1$, $z_2$,
$z_0$ and $\delta$ given in Equations~\eqref{5z} and~\eqref{eq.delta23}. 

We apply Lemma~\ref{lem.ff2} to obtain that
\bea
\psi_\hbar(x,z_1)\psi_\hbar(y,z_2)
\=
&e^{-\frac{\hbar}{24}}
C_\hbar(x,z_1) C_\hbar(y,\tz_1)
\exp\Big(\frac 1 2 (\log \tdelta - \log \delta)\Big)\\
&\times \Big\langle
\exp\Big(\frac {w^2}2(\delta -\tdelta)\Big)
\psi_\hbar(-w,\tz_1\tz_0^{-1})
\psi_\hbar(w,\tz_0)
\psi_\hbar(-w,\tz_2\tz_0^{-1})
\Big\rangle_{w,\delta}\,.
\eea
where
$a=a_\hbar(x,z) \in \BQ(z)[x]\llbracket \hbar^{1/2} \rrbracket $ is given by
\bea
a &\;:=\; \frac 1 {\hbar^{1/2}} \log(z_0\tz_0^{-1}).
\eea
We write similarly to 
Equation~\eqref{eq.psid}
\bea[]
&\psi_\hbar(-w,\tz_1\tz_0^{-1})
\psi_\hbar(w,\tz_0)
\psi_\hbar(-w,\tz_2\tz_0^{-1})\\
\=
&\exp(A_0 
+ w((x+y) + 2a 
+ \Li_0(z_1z_0^{-1})(a+x)
+ \Li_0(z_0)a
+ \Li_0(z_2z_0^{-1})(a+y))
+\frac {w^2}{2} (\tdelta - \delta)
)\\
&\times\psi_\hbar(-w+a+x,\tz_1\tz_0^{-1})
\psi_\hbar(w-a,\tz_0)
\psi_\hbar(-w+a+y,\tz_2\tz_0^{-1})
\eea
where
$A_0=A_{0,\hbar}(x,z) \in  \frac 1 {\hbar}\BQ(z)[x]\llbracket \hbar^{1/2} \rrbracket$
is given by 
\bea
\label{eq.ab.A0def}
A_0 \;:=\;
&\frac 1 \hbar (
\Li_2(\tz_1\tz_0^{-1}) + \Li_2(\tz_0) + \Li_2(\tz_2\tz_0^{-1})
-\Li_2(z_1z_0^{-1}) + \Li_2(z_0) - \Li_2(z_2z_0^{-1})
)\\
&+\frac 1 2(
\log(1-\tz_1\tz_0^{-1}) + \log(1-\tz_0) + \log(1-\tz_2\tz_0^{-1})\\
&\qquad-\log(1-z_1\tz_0^{-1}) + \log(1-z_0) + \log(1-z_2z_0^{-1})
)\\
&+\frac 1 {\hbar^{1/2}}(
\log(1-z_1z_0^{-1})(a+x)
-\log(1-z_0)a
+\log(1-z_2z_0^{-1})(a+y)
)\\
&-\frac 1 2(
\Li_0(z_1z_0^{-1})(a+x)^2 + \Li_0(z_0)a^2 + \Li_0(z_2z_0^{-1})(a+y)^2
)
\eea
Hence, $\psi_\hbar(x,z_1)\psi_\hbar(y,z_2)$ can be written as
\bea[]
&e^{-\frac{\hbar}{24}}
C_\hbar(x,z_1) C_\hbar(y,\tz_1)
\exp\Big(\frac 1 2 (\log (\tdelta) - \log (\delta))+A_0\Big)\\
&\times \Big\langle
\exp\Big(w((u+v) + 2a 
 + \Li_0(z_1z_0^{-1})(a+u)
 + \Li_0(z_0)a
 + \Li_0(z_2z_0^{-1})(a+v))\Big)\\
&\qquad\times
\psi_\hbar(-w+a+x,\tz_1\tz_0^{-1})
\psi_\hbar(w-a,\tz_0)
\psi_\hbar(-w,\tz_2\tz_0^{-1})
\Big\rangle_{w+a+y,\delta}\,.
\eea
With the change of variables
\bea
w \mapsto w+a+x+y - \frac{xz_1+yz_2}{z_0}
\eea
combined with Equation~\eqref{fga1} of Lemma~\ref{lem.ff1}, we obtain that 
\be
\label{eq.aB.fin0}
\begin{tiny}
\begin{aligned}
&e^{-\frac{\hbar}{24}}
C_\hbar(x,z_1) C_\hbar(y,\tz_1)
\exp\Big(\frac 1 2 (\log (\tdelta) - \log (\delta))+A_0\Big)\\
&\exp\Big(
\frac \delta 2\Big(a+x+y - \frac{xz_2+yz_1}{z_0}\Big)^2
 + \Big(a+x+y-\frac{xz_2+yz_1}{z_0}\Big)
 (x+y+\delta a
  +\Li_0(z_1z_0^{-1}) x +\Li_0(z_2z_0^{-1}) y)
\Big)\\
&\times \Big\langle
\psi_\hbar\Big(-w-y+\frac{xz_2+yz_1}{z_0},\tz_1\tz_0^{-1}\Big)
\psi_\hbar\Big(w+x+y+\frac{xz_2+yz_1}{z_0},\tz_0\Big)
\psi_\hbar\Big(-w-x+\frac{xz_2+yz_1}{z_0},\tz_2\tz_0^{-1}\Big)
\Big\rangle_{w,\delta}\,.
\end{aligned}
\end{tiny}
\ee
Hence, in order to prove Equation~\eqref{eq.pentagon} it remains to prove
that the term in front of the bracket simplifies to $e^{-\frac \hbar{24}}$.
For this, we use the definitions of $C_\hbar$ \eqref{Cdef}
and $A_0$ \eqref{eq.ab.A0def} to obtain 
\bea\label{eq.aB.expand}
&\log(C_\hbar(x,z_1))+\log(C_\hbar(y,\tz_1))
+\frac 1 2 (\log (\tdelta) - \log (\delta))+A_0
+\frac \delta 2\Big(a+x+y - \frac{xz_2+yz_1}{z_0}\Big)^2\\
&+ \Big(a+x+y-\frac{xz_2+yz_1}{z_0}\Big)
 (x+y+\delta a
  +\Li_0(z_1z_0^{-1}) x +\Li_0(z_2z_0^{-1}) y))\\
\=&
\frac 1 \hbar (
-\Li_2(\tz_1) + \Li_2(z_1) - \Li_2(\tz_2) + \Li_2(z_2)
+ \Li_2(\tz_1\tz_0^{-1}) - \Li_2(z_1z_0^{-1}) \\
&\qquad+ \Li_2(\tz_0^{-1}) - \Li_2(z_0) 
+ \Li_2(\tz_2\tz_0^{-1}) - \Li_2(z_2z_0^{-1}) 
)\\
&+\frac 1 2(
-\log(1-\tz_1) + \log(1-z_1)
-\log(1-\tz_2) + \log(1-z_2)
+\log \tdelta - \log \delta\\
&\qquad+\log(1-\tz_1\tz_0^{-1}) - \log(1-z_1z_0^{-1})
+\log(1-\tz_0) - \log(1-z_0)\\
&\qquad+\log(1-\tz_2\tz_0^{-1}) - \log(1-z_2z_0^{-1}))\\
&+\frac 1 {\hbar^{1/2}} (
 -\log(1-z_1)x - \log(1-z_2)y
 +(x+y)(\log(z_0)-\log(\tz_o))\\
 &\qquad+\log(1-z_1z_0^{-1}) (a+x)
 -\log(1-z_0) a
 +\log(1-z_2z_0^{-1}) (a+y)
)\\
&+\frac {a^2} 2 (\delta - \Li_0(z_1z_0^{-1})-\Li_0(z_0)-\Li_0(z_2z_0^{-1}))\\
&+\Big(x+y-\frac{xz_2+yz_2}{z_0}\Big)
\Big(x+y + \Li_0(z_1z_0^{-1})x+ \Li_0(z_2z_0^{-1})y
- \frac \delta 2 \Big(
x+y - \frac{xz_2+yz_2}{z_0}
\Big)\Big)\\
&\qquad-\Li_0(z_1z_0^{-1})\frac {x^2}2 - \Li_0(z_2z_0^{-1})\frac {y^2}2.
\eea 
Inserting the definitions of $\delta$ \eqref{eq.delta23} and $\tdelta$
\eqref{eq.aB.tdelta} and using the relations
\bea\label{eq.aB.rel}
1-z_1z_0^{-1} &\= z_2(1-z_1)z_0^{-1},\\
1-z_2z_0^{-1} &\= z_1(1-z_2)z_0^{-1},\\
1-z_0 &\= (1-z_1)(1-z_2),
\eea
and similar ones for $\tz_1, \tz_2$ and $\tz_0$ we obtain that the terms
\bea[]
&\frac 1 2(
-\log(1-\tz_1) + \log(1-z_1)
-\log(1-\tz_2) + \log(1-z_2)
+\log \tdelta - \log \delta\\
&\qquad+\log(1-\tz_1\tz_0^{-1}) - \log(1-z_1z_0^{-1})
+\log(1-\tz_0) - \log(1-z_0)\\
&\qquad+\log(1-\tz_2\tz_0^{-1}) - \log(1-z_2z_0^{-1}))\\
\eea
vanish.
Furthermore, we have
\bea
\delta - \Li_0(z_1z_0^{-1})-\Li_0(z_0)-\Li_0(z_2z_0^{-1}) \= 2
\eea
as well as
\bea[]
&\Big(x+y-\frac{xz_2+yz_2}{z_0}\Big)
\Big(x+y + \Li_0(z_1z_0^{-1})x+ \Li_0(z_2z_0^{-1})y - \frac \delta 2
\Big(
x+y - \frac{xz_2+yz_2}{z_0}
\Big)\Big)\\
&-\Li_0(z_1z_0^{-1})\frac {x^2}2 - \Li_0(z_2z_0^{-1})\frac {y^2}2
\= xy.
\eea
Therefore, Equation~\eqref{eq.aB.expand} simplifies to
\bea\label{eq.aB.expand3}
&\frac 1 \hbar (
-\Li_2(\tz_1) + \Li_2(z_1) - \Li_2(\tz_2) + \Li_2(z_2)
+ \Li_2(\tz_1\tz_0^{-1}) - \Li_2(z_1z_0^{-1}) \\
&\qquad+ \Li_2(\tz_0^{-1}) - \Li_2(z_0) 
+ \Li_2(\tz_2\tz_0^{-1}) - \Li_2(z_2z_0^{-1}) 
)\\
&+\frac 1 {\hbar^{1/2}} (
 -\log(1-z_1)x -  \log(1-z_2)y
 +(x+y)(\log(z_0)-\log(\tz_o))\\
 &\qquad+\log(1-z_1z_0^{-1}) (a+x)
 -\log(1-z_0) a
 +\log(1-z_2z_0^{-1}) (a+y)
)\\
&+a^2+xy.
\eea
Using the definition of $a$ and the relations from Equation~\eqref{eq.aB.rel}
we compute
\bea[]
&\frac 1 {\hbar^{1/2}} (
 -\log(1-z_1)x -  \log(1-z_2)y
 +(x+y)(\log(z_0)-\log(\tz_o))\\
 &\qquad+\log(1-z_1z_0^{-1}) (a+x)
 -\log(1-z_0) a
 +\log(1-z_2z_0^{-1}) (a+y)
)\\
&+a^2+xy\\
\=&\frac 1 {\hbar^{1/2}}(x(\log(z_2) - \log(\tz_0)) + y(\log(z_1) - \log(\tz_0))\\
&+\frac 1 \hbar
(\log(z_0)(\log(z_1)+\log(z_2))
+\log^2(z_0)
-\log(\tz_0)(\log(z_1)+\log(z_2)))\\
\=&\frac 1\hbar (
\log(z_2)\log(z_0) + \log(z_1)\log(z_0) - \log(z_1)\log(z_2)\\
&\qquad-\log(\tz_2)\log(\tz_0) +\log(\tz_1)\log(\tz_2) - \log(\tz_1)\log(\tz_0)
).
\eea
Using the 5--term relation of the dilogarithm we obtain that the expression
in Equation~\eqref{eq.aB.expand3}
\be
\begin{tiny}
\begin{aligned}
&
\Li_2(z_1) + \Li_2(z_2) - \Li_2(z_1z_0^{-1}) - \Li_2(z_0) - \Li_2(z_2z_0^{-1})
+\log(z_2)\log(z_0) + \log(z_1)\log(z_0) - \log(z_1)\log(z_2)\\
&-\Li_2(\tz_1) - \Li_2(\tz_2) + \Li_2(\tz_1\tz_0^{-1}) + \Li_2(\tz_0^{-1})
+ \Li_2(\tz_2\tz_0^{-1})
-\log(\tz_2)\log(\tz_0) - \log(\tz_1)\log(\tz_0)+\log(\tz_1)\log(\tz_2).
\end{aligned}
\end{tiny}
\ee
vanishes.
In particular, the terms in front of the bracket in
Equation~\eqref{eq.aB.fin0} simplify to $e^{-\frac\hbar{24}}$
which concludes the proof of the last step of Theorem~\ref{thm.pentagon}.


\bibliographystyle{plain}
\bibliography{biblio}
\end{document}